\newcommand{\Sinai}{Sina{\u{\i{}}}\xspace}
\newcommand{\Barany}{B\'ar\'any\xspace}
\newcommand{\Jarnik}{Jarn\'{\i}k\xspace}
\newcommand{\Zunic}{{\v{Z}}uni{\'c}\xspace}
\newcommand{\Erdos}{Erd{\H o}s\xspace}
\newcommand{\emphset}[1]{\mathbb{#1}}
\newcommand{\ZZ}{\emphset Z}
\newcommand{\Z}{\emphset{Z}}
\newcommand{\N}{\emphset{N}}
\newcommand{\PP}{\emphset{P}}
\newcommand{\EE}{\emphset{E}}
\newcommand{\RR}{\emphset{R}}
\newcommand{\CC}{\emphset{C}}
\newcommand{\bigO}{{O}}
\newcommand{\Ecal}{\mathcal{E}}
\DeclareMathOperator{\Li}{Li}
\newcommand{\XX}{\mathbb{X}}
\renewcommand{\L}{\XX}
\newcommand{\Lstar}{{\L}}
\newcommand{\ds}{\displaystyle}
\newcommand{\PPbl}{\PP_{\beta,\lambda}}
\newcommand{\EEbl}{\EE_{\beta,\lambda}}
\newcommand{\Gammabl}{\Gamma_{\beta,\lambda}}
\newcommand{\sigmabl}{\sigma_{\beta,\lambda}}
\newcommand{\Lbl}{L_{\beta,\lambda}}
\newcommand{\Zbl}{Z(\beta,\lambda)}
\newcommand{\phibl}{\phi_{\beta,\lambda}}
\def\e{{\bf e}}
\def\cc{{\bf c}}
\def\LL{\rm{Li}_2}
\def\LLL{\rm{Li}_3}
\renewcommand{\epsilon}{\varepsilon}
\newcommand{\indicator}[1]{\mathbf{1}_{\{#1\}}}
\newtheorem{theorem}{Theorem}
\newtheorem*{theorem*}{Theorem}
\newtheorem{lemma}{Lemma}
\newtheorem*{lemma*}{Lemma}
\theoremstyle{remark}
\newtheorem*{remark}{Remark}
\DeclareMathOperator{\Cov}{Cov}
\title{Convex lattice polygonal lines with a constrained number of vertices}
\subjclass[2010]{05A16,11P82,52A22,52C05,60F05}
\keywords{convex polygons, grand canonical ensemble, zeta functions, local limit theorem, limit shapes} 
\author[J. Bureaux]{Julien Bureaux}
\author[N. Enriquez]{Nathana\"el Enriquez}
\address{MODAL'X, Université Paris Ouest Nanterre La Défense, 200 avenue de la République, 92001 Nanterre}
\email{julien.bureaux@math.cnrs.fr, nathanael.enriquez@u-paris10.fr}
\begin{document}

\begin{abstract}
A detailed combinatorial analysis of planar convex lattice polygonal lines
is presented.
This makes it possible to answer an open question of Vershik regarding the existence of a limit shape when the number of vertices is constrained.
\end{abstract}

\maketitle

\section{Introduction}

In 1979, Arnold~\cite{arnold_statistics_1980} considered the question of the number of
equivalence classes of convex lattice polygons having a given integer as area
(we say that two polygons having their vertices on $\Z^2$ are equivalent if one
is the image of the other by an automorphism of $\Z^2$).
Later, Vershik changed the constraint in this problem and raised the question of
the number, and typical shape, of convex lattice polygons included in a large box
$[-n,n]^2$. 
The stepping stone in this problem lies in the understanding of the number and shape of polygonal lines having integer vertices, starting from the origin and forming a sequence of increasing slopes.
 In 1994, three different solutions to this problem were found by
 B\'ar\'any~\cite{barany_limit_1995}, Vershik~\cite{vershik_limit_1994} and \Sinai{}~\cite{sinai_probabilistic_1994}. 
Namely, they proved that, when $n$ goes to infinity:
\begin{enumerate}[\quad\bfseries (a)]\itshape
    \item The number of convex polygonal lines with vertices in $(\ZZ\cap [0,n])^2$ joining $(0,0)$ to $(n,n)$ is equal to $\ds \exp(3(\zeta(3)/\zeta(2))^{1/3}\,n^{2/3}+o(n^{2/3}))$.
    \item The number of vertices constituting a typical line is equivalent to $(\zeta(3)^2\zeta(2))^{-1/3}\,n^{2/3}$.
    \item There is a limit shape for a typical convex polygonal line, which is an arc of a parabola.
\end{enumerate}

It turns out that these problems are related to an earlier family of works we shall discuss now. 
In 1926, \Jarnik{} found an asymptotic equivalent of the maximal number of integral points
that can be interpolated by a convex curve of Euclidean length $n$.
He obtained also an explicit number-theoretic constant times $n^{2/3}$.
This article was at the origin of many works of Diophantine analysis, and we
refer the reader to the papers of Schmidt~\cite{schmidt_integer_1985} and Bombieri and Pila~\cite{bombieri_number_1989} for more recent results,  discussions and open questions on this subject.
One may  slightly change Jarn\'{\i}k's framework, and consider the set of integral points which are interpolated by the graph on $[0,n]$ of  an increasing and strictly convex function satisfying $f(0)=0$ and $f(n)=n$.
In 1995, Acketa and \Zunic~\cite{acketa_maximal_1995} proved the following box analog of Jarn\'ik's result: the largest  number of vertices for an increasing convex polygonal line on $\Z_+^2$ joining $(0,0)$ to $(n,m)$ is asymptotically equivalent to $3\pi^{-2/3}({nm})^{1/3}$.
They derived the asymptotic value of the maximal number of vertices for a lattice polygon included in a square.

The nature of the results shows that these problems are related to both 
affine differential geometry and geometry of numbers. Indeed, the parabola
found as limit shape coincides with the convex curve inside the square having the
largest affine perimeter. Furthermore, the appearance of the values of the
Riemann zeta function underlines the arithmetic aspects of the problem. One could
show indeed, by using Valtr's formula~\cite{valtr_probability_1995}, that if the lattice $\Z^2$ was replaced by a Poisson Point Process
having intensity one (which can be thought as the most isotropic ``lattice'' one
can imagine), the constants $ (\zeta^2(3)\zeta(2))^{-1/3} \approx 0.749$ and 
$3(\zeta(3)/\zeta(2))^{1/3} \approx 2.702$ would be merely raised respectively to 1 and 3
asymptotically almost surely. The link with number theory was made even more clear by the authors who proved in \cite{bureaux_enriquez_2016} that Riemann's Hypothesis is actually equivalent to the fact that the remainder term $o(n^{2/3})$ in point \textbf{(a)} is $o(n^{1/6+\epsilon})$ for all $\epsilon>0$. 

As we said above, various strategies have been considered for Vershik's problem. \Barany{}~\cite{barany_limit_1995} and Vershik~\cite{vershik_limit_1994} use generating functions and an affine perimeter maximization problem.
Later, Vershik and Zeitouni~\cite{vershik_large_1999} made result \textbf{(c)} more precise and general by proving a large deviation principle whose rate function involves this affine perimeter.
\Sinai{}'s approach was very different. His proof is based on a statistical mechanical description of the problem. It was recently made fully rigorous and extended by Bogachev and Zarbaliev \cite{bogachev_universality_2011}.

\subsection{Main results}

Our aim in this paper is to improve the three results \textbf{(a),(b),(c)} described above.
In particular, we shall address the following natural extension of \textbf{(c)} which appears as an open question in Vershik's 1994 article:
\begin{quotation}
\itshape
``Theorem 3.1 shows how the number of vertices of a typical polygonal line grows. However, one can consider some other fixed growth, say, $\sqrt n$, and look for the limit shapes for uniform distributions connected with this growth [...]"
\end{quotation}
One of our results is that, not only there still exists a limit shape when the number of vertices is constrained, but also the parabolic limit shape is actually universal for all growth rates. 
The following theorem is a consequence of Theorem~\ref{thm:limit_shape_numerous} of section~\ref{sec:limit_shape} and Theorem~\ref{thm:limit_shape_few} of section~\ref{sec:few} which concern respectively limit shape results for lines with many and few vertices.

\begin{theorem*}
    The Hausdorff distance between a random convex polygonal line on $(\frac{1}{n}\ZZ\cap [0,1])^2$ joining $(0,0)$ to $(1,1)$ with at most $k$ vertices, and the arc of parabola
    \[
        \left\{(x,y) \in [0,1]^2 \mid \sqrt{\vphantom1 y}\,+ \sqrt{\vphantom y1-x} = 1\right\},
    \]
    converges in probability to $0$ when both $n$ and $k$ tend to infinity. 
\end{theorem*}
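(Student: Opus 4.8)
The plan is to realize the uniform measure on convex polygonal lines with a constrained number of vertices as a conditioned Boltzmann (grand canonical) ensemble, and to read off the limit shape from this ensemble. Encode a convex polygonal line by the multiplicities $(m_v)_v$ it assigns to the primitive vectors $v \in \ZZ^2_{\geq 0}$: the edge in direction $v$ has length $m_v|v|$, the endpoint is $\sum_v m_v v$, and the number of vertices is $\NC = \#\{v : m_v \geq 1\}$. I would introduce the two-parameter family $\PPbl$ under which the $m_v$ are independent with $\PPbl(m_v = m) \propto \lambda^{\indicator{m\geq 1}} e^{-m\sca{\beta,v}}$, where the tilt $\beta \in \RR^2_{>0}$ tunes the endpoint and the fugacity $\lambda > 0$ tunes the vertex count. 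The partition function $\Zbl$ then factorizes over primitive directions, and as $\beta \to 0$ its logarithm admits an asymptotic expansion governed by $\zeta(2)$ and $\zeta(3)$, the same arithmetic producing the constants in \textbf{(a)}--\textbf{(b)}. The uniform law on lines from $(0,0)$ to $(n,n)$ with at most $k$ vertices is exactly $\PPbl$ conditioned on $\{\sum_v m_v v = (n,n)\} \cap \{\NC \leq k\}$, for any admissible $(\beta,\lambda)$.

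Next I would compute the limit shape directly inside the Boltzmann ensemble. Resolving the line by slope, the expected displacement accumulated from directions of slope at most $s$ is $\sum_{v_2/v_1 \le s} \EEbl[m_v]\, v$; replacing the sum over primitive vectors by an integral (primitive vectors having density $1/\zeta(2)$ among lattice points) and rescaling by $n$, this expected trajectory converges to a deterministic convex arc. Forcing the endpoint to be diagonal imposes $\beta_1 = \beta_2$ by symmetry, and a direct evaluation of the resulting integral identifies the arc as $\{\sqrt{y}+\sqrt{1-x}=1\}$. The crucial point is universality: the fugacity $\lambda$ and the common scale of $\beta$ enter the slope-resolved ratio only through an overall factor that cancels in the scaling limit, so the limiting arc is the same parabola for every growth rate of the vertex count. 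I would then upgrade this law of large numbers to convergence in Hausdorff distance by a second-moment estimate, using independence of the $m_v$ across directions to bound the fluctuations of the rescaled trajectory uniformly in $s$.

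The remaining task is to pass from $\PPbl$ to the conditioned measure without distorting the shape, and this is where the two regimes of Theorems~\ref{thm:limit_shape_numerous} and~\ref{thm:limit_shape_few} appear. One tunes $(\beta,\lambda)$ so that $\EEbl[\sum_v m_v v] = (n,n)$ and $\EEbl[\NC]$ sits at the target vertex scale: when $k$ is of order $n^{2/3}$ or larger the constraint $\{\NC \le k\}$ is (nearly) inactive and $\lambda$ stays bounded away from $0$, whereas when $k \ll n^{2/3}$ the constraint bites, $\lambda \to 0$, and the line becomes sparse. In either case I would establish a three-dimensional local limit theorem for the joint law of $(\sum_v m_v v, \NC)$ --- a Gaussian estimate obtained by a saddle-point analysis of the characteristic function of these independent contributions --- so that conditioning on the exact endpoint and on $\{\NC \le k\}$ multiplies probabilities by a factor asymptotically constant over the relevant range, leaving the limit shape derived above unchanged.

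The main obstacle is the uniformity of this local limit theorem across all growth rates of $k$, and in particular the few-vertices regime. As $\lambda \to 0$ the coordinate $\NC$ has small mean and the joint lattice distribution degenerates, so the Gaussian approximation must be controlled by genuinely estimating the characteristic function over its whole domain rather than only near the origin, and the covariance of $(\sum_v m_v v, \NC)$ must be shown nondegenerate and correctly scaled uniformly in $(\beta,\lambda)$. A secondary point is that the event is $\{\NC \le k\}$ rather than $\{\NC = k\}$, so one must sum the local estimates over all vertex counts up to $k$ and verify that the dominant contribution comes from a neighborhood of the tuned mean $\EEbl[\NC] \approx k$, which is precisely what pins the shape. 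Once these estimates are in place in each regime, combining Theorem~\ref{thm:limit_shape_numerous} and Theorem~\ref{thm:limit_shape_few} yields the stated convergence in probability of the Hausdorff distance to $0$.
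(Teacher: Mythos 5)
Your overall architecture for the bulk regime --- the vertex-fugacity Gibbs measure $\PPbl$, calibration of $(\beta,\lambda)$ to the target endpoint and vertex count, a three-dimensional local limit theorem, and the observation that the slope-resolved expected trajectory is the same parabola for every $\lambda$ --- is exactly the paper's strategy in sections~\ref{sec:dca} and~\ref{sec:limit_shape}. But two of your steps, as described, would fail. First, the passage from $\PPbl$ to the conditioned uniform measure goes through the inequality $\PP_{n,k}(E) \leq \PPbl(E)/\PPbl[X=n,\,K=k]$, and by Theorem~\ref{thm:local_limit} the denominator is of polynomial order $\sqrt{k}/|n|^{2}$. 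A second-moment estimate on the fluctuations of the rescaled trajectory only yields a deviation probability of order $1/(\eta^{2}k)$ (the variance of $X_1^\theta$ is of order $|n|^{2}/k$), which does not survive division by $\sqrt{k}/|n|^{2}$. You need a \emph{uniform exponential} concentration bound of the form $\exp(-c\eta^{2}k(1+o(1)))$; the paper obtains it (Lemma~\ref{lem:concentration}) by applying Doob's maximal inequality to the exponential submartingale $e^{tM_\theta}$, where $M_\theta = X_i^\theta - \EEbl[X_i^\theta]$, and optimizing over $t < \beta_i$. Even then the transfer only works when $\log|n| = o(k)$, which brings up the second gap.

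Your plan to cover all growth rates of $k$ by a single local limit theorem cannot work when $k = O(\log n)$. The Cram\'er-type bound on the characteristic function away from the origin is only $e^{-ck}$ (the number of effectively contributing primitive directions is $\asymp 1/(\beta_1\beta_2) \asymp \lambda^{-2/3}|n|^{2/3}$, each contributing $\asymp\lambda$ to the exponent, for a total $\asymp \lambda^{1/3}|n|^{2/3} \asymp k$), and this must beat the Gaussian scale $|n|^{-2}$ of the peak; that forces $k \gg \log|n|$. For smaller $k$ the vertex-count coordinate is in a Poisson-like rather than Gaussian regime and the grand-canonical approach genuinely degenerates --- this is not a uniformity issue that a more careful saddle-point analysis will repair. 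The paper closes this range by a completely different, elementary argument (section~\ref{sec:few}): for $k = o(n^{1/3})$ it embeds the discrete model into the continuous model of $k-1$ uniform points of $[0,1]^2$ conditioned to be in convex position, via Valtr's reordering of increasing North-East paths, and imports the parabolic limit shape from B\'ar\'any and B\'ar\'any--Rote--Steiger--Zhang. Since the regimes $k \gg \log n$ and $k = o(n^{1/3})$ overlap, the two theorems together give the statement; without some second argument of this kind your proof leaves the range $k = O(\log n)$ uncovered.
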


The proof of this theorem requires a detailed combinatorial analysis of convex polygonal lines with a constrained number of vertices. This is the purpose of Theorem~\ref{thm:detailed_comb} and Theorem~\ref{thm:asymp_very_few_vertices} which together complete results \textbf{(a)} and \textbf{(b)} in the following way:

\begin{theorem*}

Let $p(n;k)$ denote the number of convex polygonal lines in $\mathbb Z_+^2$ joining $(0,0)$ to $(n,n)$ and having  \(k\) vertices.

    \begin{itemize}
        \item 
            There exist two functions $\cc$ and $\e$ (which are explicitly computed in Theorem~\ref{thm:detailed_comb}) such that, for all  $\ell \in (0,+\infty)$, if $k$ is asymptotically equivalent to $\cc(\ell)\,n^{2/3}$, then
    \[
        \log p(n;k) \sim  \e(\ell)\, n^{2/3}.
    \]
        \item  If $k$ is asymptotically negligible compared to $n^{2/3}$, then
            \[
                p(n;k) = \left(\frac{n^2}{k^3}\right)^{k+o(k)}.
            \]
       \item If  $k$ is asymptotically negligible compared to  $n^{1/2}(\log n)^{-1/4}$, then
                \[
     p(n;k) \sim \frac{1}{k!}\binom{n-1}{k-1}^2.
    \]
    \end{itemize}
\end{theorem*}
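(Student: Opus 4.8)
The plan is to recast the problem in a grand canonical (Boltzmann) ensemble, to treat the first regime through that single mechanism and the other two by elementary counting. A convex line with $k$ edges from $(0,0)$ to $(n,n)$ is exactly a family of nonnegative integer multiplicities $(\xi_p)$ indexed by the primitive vectors $p=(a,b)\in\ZZ_+^2\setminus\{0\}$, the edge of direction $p$ being present precisely when $\xi_p\ge1$, and distinct edges automatically carrying distinct slopes so that convexity is free. Hence $p(n;k)=[x^ny^nz^k]\,Z(x,y,z)$ with $Z(x,y,z)=\prod_{p}\bigl(1+z\,x^ay^b/(1-x^ay^b)\bigr)$. I would introduce the product measure $\PP_{\beta,\lambda}$ under which the $\xi_p$ are independent, obtained by setting $x=y=e^{-\beta}$ and $z=\lambda$; writing $X=\sum_p a\,\xi_p$, $Y=\sum_p b\,\xi_p$ and $N=\sum_p\mathbf 1\{\xi_p\ge1\}$, one gets the exact identity $p(n;k)=\lambda^{-k}e^{2\beta n}\,Z(\beta,\lambda)\,\PP_{\beta,\lambda}(X=n,\,Y=n,\,N=k)$, which is the engine for the first regime.

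For the first bullet I would fix $\lambda\in(0,\infty)$ and choose $\beta=\beta(\lambda)\,n^{-1/3}$ so that $\EE_{\beta,\lambda}[X]=\EE_{\beta,\lambda}[Y]=n$ (the symmetry of the endpoint forces $x=y$). Replacing the sum over primitive vectors by an integral through \Moebius{} summation — this is where the factor $1/\zeta(2)$ enters — and evaluating the resulting one-dimensional integrals against the law of $\xi_p$ yields $\EE_{\beta,\lambda}[N]\sim c(\lambda)\,n^{2/3}$ together with $\log Z(\beta,\lambda)+2\beta n-k\log\lambda\sim e(\lambda)\,n^{2/3}$, where $c$ and $e$ are explicit and built from $\Li_2,\Li_3,\zeta(2),\zeta(3)$; the value $\lambda=1$ should reproduce the constants of points \textbf{(a)} and \textbf{(b)}, a useful consistency check, and a reparametrisation of $\lambda$ by $\ell$ produces $c(\ell),e(\ell)$. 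It then remains to show $\log\PP_{\beta,\lambda}(X=n,Y=n,N=k)=o(n^{2/3})$; the upper bound is trivial and the lower bound follows from a three-dimensional local limit theorem for the independent integer vectors $(a\xi_p,b\xi_p,\mathbf 1\{\xi_p\ge1\})$, whose conclusion is a merely polynomial decay in $n$. Combining the three displays gives $\log p(n;k)\sim e(\ell)\,n^{2/3}$. I expect this local limit theorem to be the main obstacle: one must check that the covariance matrix is nondegenerate (the vector is genuinely three-dimensional) and control the characteristic function away from the origin (an aperiodicity statement), uniformly enough in $\lambda$.

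For the third bullet I would drop the ensemble and argue directly. Let $D_k$ denote the number of ordered $k$-tuples of vectors of $(\ZZ_{\ge1})^2$ with pairwise distinct slopes and sum $(n,n)$; sorting by slope is a $k!$-to-one bijection onto convex lines with $k$ edges and no horizontal or vertical edge, so that contribution equals $D_k/k!$. Since the number of ordered $k$-tuples of positive vectors with sum $(n,n)$ is $\binom{n-1}{k-1}^2$, one has $D_k=\binom{n-1}{k-1}^2-R_k$, where $R_k$ counts tuples containing a parallel pair. The crux is the first-moment estimate $R_k\big/\binom{n-1}{k-1}^2=O\!\bigl(k^4(\log n)/n^2\bigr)$: grouping a parallel pair by its common primitive direction $(p,q)$ reduces the count to a sum of the shape $\sum_{(p,q)\text{ prim}}(p+q)^{-2}$, a logarithmically divergent divisor sum effectively truncated at scale $n/k$, while the typical coordinate size $n/k$ supplies the remaining powers. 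This error is $o(1)$ exactly when $k=o\bigl(n^{1/2}(\log n)^{-1/4}\bigr)$, which is the origin of that threshold; the lines using one horizontal or one vertical edge contribute only a further relative $O(k^2/n)$. Hence $p(n;k)\sim D_k/k!\sim\frac1{k!}\binom{n-1}{k-1}^2$.

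For the second bullet only the logarithm is asserted, so two crude bounds suffice. The upper bound discards convexity: a convex line with $k$ edges injects into the set of $k$-element subsets of $\ZZ_+^2\setminus\{0\}$ summing to $(n,n)$, whence $p(n;k)\le\frac1{k!}\binom{n+k-1}{k-1}^2$, and Stirling gives $\log p(n;k)\le(k+o(k))\log(n^2/k^3)$ as soon as $\log(n^2/k^3)\to\infty$, i.e. precisely when $k=o(n^{2/3})$. For the matching lower bound the naive subtraction $\binom{n-1}{k-1}^2-R_k$ is now vacuous (here $R_k$ may exceed the main term), so I would instead restrict to configurations whose $k$ edge-directions are distinct primitive vectors of coordinate size $\asymp\sqrt k$ — these are collision-free by construction — and count the admissible multiplicity vectors by a lattice-point estimate in the corresponding $(k-2)$-dimensional polytope; Stirling again matches $(k+o(k))\log(n^2/k^3)$. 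This lower bound is the secondary technical point, but since it only needs to be sharp at the logarithmic scale, polynomial losses are harmless, and one concludes $p(n;k)=(n^2/k^3)^{k+o(k)}$.
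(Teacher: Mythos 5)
Your treatment of the first and third bullets is essentially the paper's: the same grand canonical ensemble with a fugacity $\lambda$ conjugate to the number of vertices, the same exact identity $p(n;k)=\lambda^{-k}e^{2\beta n}Z(\beta,\lambda)\,\PP_{\beta,\lambda}(X=n,Y=n,N=k)$, calibration of $(\beta,\lambda)$ and a three-dimensional local limit theorem for the first bullet; and Valtr's $k!$-to-one correspondence together with the first-moment bound $O(k^4(\log n)/n^2)$ on parallel pairs for the third. You correctly single out the control of the characteristic function away from the origin as the hard point of the local limit theorem; that is indeed where most of the work lies in the paper.

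The genuine gap is the lower bound in your second bullet. If the $k$ edge directions are forced to be distinct primitive vectors with coordinates of size $\asymp\sqrt k$, there are only $\Theta(k)$ such primitive vectors, so the choice of directions contributes $e^{O(k)}$, and the only remaining freedom is the multiplicity vector $(m_1,\dots,m_k)$ with $\sum_i m_i p_i=(n,n)$. Since each $|p_i|\asymp\sqrt k$, this forces $\sum_i m_i=O(n/\sqrt k)$, so the number of admissible multiplicity vectors is at most $\binom{O(n/\sqrt k)+k}{k}=(n/k^{3/2})^{k+o(k)}=(n^2/k^3)^{k/2+o(k)}$ --- the square root of what is required, so Stirling does \emph{not} match. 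The entropy of the true count lives in the choice of the edge vectors themselves (a two-parameter family per edge, each edge being a vector of norm $\asymp n/k$ used with multiplicity one), not in the multiplicities (a one-parameter family per edge); your configuration is the extremal one of \Jarnik and Acketa--\Zunic, which maximizes the number of vertices, not the number of configurations. Repairing this elementarily is not immediate: taking $k$ multiplicity-one edges of norm $\asymp n/k$ with distinct slopes and prescribed sum runs into the fact that for $k$ close to $n^{2/3}$ the expected number of parallel pairs $\asymp k^4(\log n)/n^2$ diverges, so the first-moment deletion of the third bullet no longer applies. The paper sidesteps all of this: for $\log n\ll k\ll n^{2/3}$ the second bullet falls out of the same Gibbs identity with $\lambda\sim k^3/n^2\to 0$, the factor $\lambda^{-k}$ then dominating $\log p(n;k)$, while the complementary range $k=O(\log n)$ is absorbed by the third bullet. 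Your upper bound for the second bullet is fine.
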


Let us mention that the question of the number of vertices is reminiscent of other ones considered, for instance, by \Erdos and Lehner~\cite{erdos_distribution_1941}, Arratia
and Tavar\'e~
\cite{arratia_independent_1994}, or Vershik and Yakubovich~\cite{vershik_limit_2001} who were studying
combinatorial objects (integer partitions, permutations, polynomials over finite field, Young
tableaux, etc.) having a specified number of summands 
(according to the setting, we  call summands, cycles, irreducible divisors, etc.).

\subsection{Organization of the paper}
In section~\ref{sec:dca}, we present the detailed combinatorial analysis in the case of many vertices $k \gg \log |n|$.
Following \Sinai{}'s approach, the method, borrowed from classical
ideas of statistical physics, relies  on the introduction of a grand
canonical ensemble which endows the considered combinatorial object with a parametrized probability measure. Then, the strategy consists in calibrating the parameters of the probability in order to
fit with the constraints one has to deal with. Namely, in  our question, it turns
out that one can add one parameter in \Sinai{}'s probability distribution that makes it
possible to take into account, not only the location of the extreme point of the
polygonal line but also the number of vertices it contains.
In this model, we are able to establish a contour-integral representation of the logarithmic partition function in terms of Riemann's and Barnes' zeta functions. The residue analysis of this representation leads to precise estimates of this function as well as of its derivatives, which correspond to the moments of the random variables of interest such as the position of the terminal point and the number of vertices of the line. Using a local limit theorem, we finally obtain the
asymptotic behavior of the number of lines having $\cc(\ell)\, (n_1n_2)^{1/3}$ vertices in terms of the polylogarithm functions $\Li_1,\Li_2,\Li_3$. We also obtain an asymptotic formula for the number of lines having a number $k$ of vertices satisfying $\log |n| \ll k \ll |n|^{2/3}$.

In section~\ref{sec:limit_shape}, we derive results about the limit shape of lines having a fixed number of vertices $k \gg \log |n|$, answering the question of Vershik in a wide range.

In section~\ref{sec:few}, we extend the results about combinatorics and limit shape beyond $\log |n|$. The approach here is radically different and more elementary. It allows us to recover the results of sections~\ref{sec:dca} and~\ref{sec:limit_shape}, up to $k \ll |n|^{1/3}$. It relies on the comparison with a continuous setting which has been studied by \Barany~\cite{barany_sylvesters_1999} and \Barany, Rote, Steiger, Zhang \cite{barany_central_2000}.

In section~\ref{sec:jarnik}, we go back to Jarn\'{\i}k's original problem. In addition to Jarn\'{\i}k's result that we recover, we give the asymptotic number of lines, typical number of vertices, and  limit shape, which is an arc of a circle, in this different framework. 

In section~\ref{sec:onion}, we mix both types of conditions. The statistical physical method still applies and we obtain, for the convex lines joining $(0,0)$ to $(n,n)$ and having  a given total length, a continuous family of convex limit shapes that interpolates the diagonal of the square and the two sides of the square, going through the above arc of parabola and  arc of  circle.

\section{A one-to-one  correspondence}
\label{sec:correspondence}

We start this paper by reminding the correspondence between finite convex polygonal
lines issuing from $0$ whose vertices define increasing sequences in both
coordinates and finite distributions of multiplicities on the set of pairs of coprime positive integers.
This correspondence is a discrete analogue of the Gauss-Minkowski transformation in convex geometry.

More precisely,  let $\Pi$ denote the set of finite planar convex polygonal lines $\Gamma$ issuing
from 0 such that the vertices of $\Gamma$ are points of the lattice $\Z^2$ and the angle
between each side of $\Gamma$ and the horizontal axis is in the interval
$[0,\pi/2]$.  Now consider the set $\XX$ of all vectors $x = (x_1,x_2)$ whose coordinates are coprime positive integers including the pairs $(0,1)$ and $(1,0)$. \Jarnik{} observed that the space $\Pi$ admits a simple alternative description in terms of distributions of multiplicities on $\XX$.

\begin{lemma*}
\label{lem:corresp}
    The space \(\Pi\) is in one-to-one correspondence with the space
    \(\Omega\) of nonnegative integer-valued functions \(x \mapsto
    \omega(x)\) on \(\XX\) with finite support (that is \(\omega(x)\neq
    0\) for only finitely many \(x \in \XX\)).
\end{lemma*}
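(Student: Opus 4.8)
The plan is to realize the correspondence through the \emph{edge decomposition} of a polygonal line together with the primitive (coprime) factorization of lattice vectors. For $x=(x_1,x_2)\in\XX$ define its slope $s(x)=x_2/x_1\in[0,+\infty]$, with the conventions $s(1,0)=0$ and $s(0,1)=+\infty$. First I would record the elementary but crucial fact that $s$ is injective on $\XX$: if $s(x)=s(y)$ for two coprime pairs $x,y$, then $x_1y_2=x_2y_1$, and coprimality of each pair forces $x_1=y_1$ and $x_2=y_2$ (the cases with a vanishing coordinate being settled by the two conventions above). Consequently the slope induces a total order on every finite subset of $\XX$.

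Next I would define the two maps. Given $\Gamma\in\Pi$, its sides are nonzero vectors $e_1,\dots,e_r\in\Z^2$ lying in the closed first quadrant, and by convexity their slopes are \emph{strictly increasing}; we may therefore assume $s(e_1)<\cdots<s(e_r)$. Each $e_i$ has a unique factorization $e_i=m_i\,x^{(i)}$ with $m_i=\gcd(e_{i,1},e_{i,2})\geq 1$ and $x^{(i)}\in\XX$ primitive, and since $s(e_i)=s(x^{(i)})$ the directions $x^{(1)},\dots,x^{(r)}$ are pairwise distinct. I then set $\omega(x^{(i)})=m_i$ and $\omega(x)=0$ otherwise; this $\omega$ has finite support, so $\omega\in\Omega$ (the empty line $\{0\}$ corresponding to $\omega\equiv0$). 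Conversely, given $\omega\in\Omega$, let $x^{(1)},\dots,x^{(r)}$ be the finite support of $\omega$ listed in strictly increasing slope order, and build the polygonal line with vertices $V_0=0$ and $V_i=V_{i-1}+\omega(x^{(i)})\,x^{(i)}$. Its successive sides $\omega(x^{(i)})\,x^{(i)}$ lie in the first quadrant and have strictly increasing slopes, so the resulting line is convex, issues from $0$, has all angles in $[0,\pi/2]$, and has integer vertices; hence it lies in $\Pi$.

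Finally I would check that these two constructions are mutually inverse. Starting from $\Gamma$, the reconstruction uses exactly the slope-ordered sides $e_i=m_i x^{(i)}=\omega(x^{(i)})x^{(i)}$, so it returns $\Gamma$ unchanged; the only input here is that the sides of a convex line are already slope-ordered with distinct slopes, which is precisely the strict convexity recorded above. Starting from $\omega$, the sides of the reconstructed line are the vectors $\omega(x^{(i)})x^{(i)}$, whose primitive parts and multiplicities recover $\omega$ by uniqueness of the gcd-factorization. This establishes the bijection between $\Pi$ and $\Omega$.

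I expect the only point requiring genuine care — rather than a true obstacle — to be the verification that convexity forces the sides to appear in strictly increasing slope order with \emph{pairwise distinct} slopes; without distinctness the forward map could collapse two sides onto a single direction and fail to be well defined. Once the injectivity of $s$ on $\XX$ and the uniqueness of the primitive factorization are in hand, everything else is routine bookkeeping.
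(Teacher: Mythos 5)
Your proof is correct and follows essentially the same route as the paper, which only sketches the inverse map $\Omega\to\Pi$ by summing the vectors $\omega(x)\,x$ in increasing slope order; you supply the forward direction explicitly via the primitive factorization $e_i=m_i x^{(i)}$ and the injectivity of the slope on $\XX$. The point you flag (that convexity together with the convention that sides are maximal segments forces pairwise distinct, strictly increasing slopes) is exactly the content the paper relies on implicitly, so there is no gap.
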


The inverse map \(\Omega \to \Pi\) corresponds to the following simple construction: for a given multiplicity distribution \(\omega \in \Omega\) and for all \(\theta\in [0,\infty]\), let us define
\begin{equation}
    X_i^\theta(\omega) := \sum_{\substack{(x_1,x_2) \in \XX\\ x_2 \leq \theta x_1}} \omega(x)\cdot x_i, \qquad i \in \{1,2\}.
\end{equation}
When \(\theta\) ranges over \([0,\infty]\), the function \(\theta \mapsto X^\theta(\omega) = (X^\theta_1(\omega),X^\theta_2(\omega))\) takes a finite number of values which are points of the lattice quadrant \(\ZZ_+^2\). These points are in convex position since we are adding vectors in increasing slope order. The convex polygonal curve \(\Gamma \in \Pi\) associated to \(\omega\) is simply the linear interpolation of these points starting from \( (0,0) \).

\section{A detailed combinatorial analysis}
\label{sec:dca}

For every \(n = (n_1,n_2) \in \ZZ_+^2\) and \(k\in \ZZ_+\), define \(\Pi(n; k)\) the subset of $\Pi$ consisting of polygonal lines \(\Gamma \in \Pi\) with endpoint \(n\)
and having  \(k\) vertices, and denote by \(p(n;k) := \left|\Pi(n;k)\right|\) its cardinality. Before we can state our first theorem, let us recall that the polylogarithm $\Li_s(z)$ is defined for all complex number $s$ with $\Re(s) > 0$ and $|z| < 1$ by
\[
    \Li_s(z) = \sum_{k=1}^\infty \frac{z^k}{k^s} = \frac{1}{\Gamma(s)}\int_0^\infty \frac{z t^{s-1}}{e^t - z} dt.
\]
The integral in the last term is a holomorphic function of $z$ in $\CC\setminus[1,+\infty)$. We will work with this analytic continuation of $\Li_s$ in the sequel.
We now define $\cc(\ell)$ and $\e(\ell)$ for all $\ell\in (0,+\infty)$ by
\[
    \cc(\ell) =\frac{\ell}{1-\ell}
    \times\frac{\LL(1-\ell)}{\zeta(2)^{1/3}(\zeta(3)- 
    \LLL(1-\ell))^{2/3}},
    \quad
    \e(\ell) =3\left(\frac{\zeta(3)- 
    \LLL(1-\ell)}{\zeta(2)}\right)^{1/3}- \log(\ell)\cc(\ell).
\]
The following statement indicates the asymptotic exponential  behavior of $p(n;k)$ in the case of many vertices, that is to say, when $k$ is not too small with respect to $|n|$.

\begin{theorem}
    \label{thm:detailed_comb}
    Suppose that $|n|$ and $k$ tend to $+\infty$ such that $n_1 \asymp n_2$ and $\log |n|$ is asymptotically negligible compared to $k$.
    
    \begin{itemize}
        \item 
            If there exists $\ell \in (0,+\infty)$ such that $k \sim \cc(\ell)(n_1n_2)^{1/3}$, then
    \[
        \log p(n;k) \sim  \e(\ell) (n_1n_2)^{1/3}.
    \]
        \item  If $k$ is asymptotically negligible compared to $(n_1n_2)^{1/3}$, then
            \[
                p(n;k) = \left(\frac{n_1n_2}{k^3}\right)^{k+o(k)}.
            \]
    \end{itemize}
\end{theorem}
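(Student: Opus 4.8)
The plan is to follow the grand canonical strategy of \Sinai{}, enriching it with a second fugacity $\lambda$ conjugate to the number of vertices. Using the correspondence of Section~\ref{sec:correspondence}, I would identify a line $\Gamma\in\Pi$ with its multiplicity distribution $\omega\in\Omega$, so that its endpoint is $X(\omega)=\sum_{x\in\XX}\omega(x)\,x$ and its number of vertices is $N(\omega)=\sum_{x\in\XX}\indicator{\omega(x)\ge1}$. On $\Omega$ I would put the product measure $\PPbl$ under which the variables $(\omega(x))_{x\in\XX}$ are independent with $\PPbl(\omega(x)=j)\propto\lambda^{\indicator{j\ge1}}e^{-\inp{\beta,x}j}$, so that the partition function factorizes as
\[
    \Zbl=\prod_{x\in\XX}\Bigl(1+\lambda\,\tfrac{e^{-\inp{\beta,x}}}{1-e^{-\inp{\beta,x}}}\Bigr).
\]
The point of this ensemble is the exact identity
\[
    p(n;k)=\Zbl\,e^{\inp{\beta,n}}\lambda^{-k}\,\PPbl\bigl(X=n,\;N=k\bigr),
\]
valid for every admissible $(\beta,\lambda)$, which reduces the theorem to (i) estimating $\log\Zbl$ and its derivatives and (ii) bounding the probability on the right by a local limit theorem after a suitable calibration of $(\beta,\lambda)$.

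For step (i) I would start from the closed form $\log\Zbl=\sum_{x\in\XX}\bigl[\Li_1(e^{-\inp{\beta,x}})-\Li_1((1-\lambda)e^{-\inp{\beta,x}})\bigr]$ and Mellin-transform each summand using $\int_0^\infty t^{w-1}\Li_1(ae^{-t})\,dt=\Gamma(w)\Li_{1+w}(a)$. Inserting the Dirichlet series $\sum_{x\in\XX}\inp{\beta,x}^{-w}=\zeta(w)^{-1}\zeta_2\bigl(w;\beta_1+\beta_2;(\beta_1,\beta_2)\bigr)+\beta_1^{-w}+\beta_2^{-w}$, where the coprimality built into $\XX$ produces the factor $\zeta(w)^{-1}$ by \Moebius inversion and $\zeta_2$ is the Barnes double zeta, yields
\[
    \log\Zbl=\frac{1}{2\pi i}\int_{(c)}\Gamma(w)\bigl[\zeta(1+w)-\Li_{1+w}(1-\lambda)\bigr]\Bigl[\tfrac{\zeta_2(w;\beta_1+\beta_2;(\beta_1,\beta_2))}{\zeta(w)}+\beta_1^{-w}+\beta_2^{-w}\Bigr]dw.
\]
Shifting the contour to the left, the dominant contribution as $\beta\to0$ is the residue of $\zeta_2$ at its rightmost pole $w=2$ (residue $1/\beta_1\beta_2$), giving
\[
    \log\Zbl\sim\frac{\zeta(3)-\LLL(1-\lambda)}{\zeta(2)}\cdot\frac{1}{\beta_1\beta_2},
\]
the pole of $\zeta_2$ at $w=1$ and the poles coming from the $\beta_i^{-w}$ terms being of lower order. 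The hypothesis $n_1\asymp n_2$ keeps $\beta_1\asymp\beta_2$, which is what makes these residue estimates uniform. Applying the same analysis to the derivatives produces $\EEbl X=-\nabla_\beta\log\Zbl$, $\EEbl N=\lambda\partial_\lambda\log\Zbl$ and the $3\times3$ covariance matrix of $(X_1,X_2,N)$ with explicit leading terms.

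For step (ii) I would set $\lambda=\ell$ and solve $\EEbl X=n$; writing $A:=(\zeta(3)-\LLL(1-\ell))/\zeta(2)$, this forces $\beta_1\beta_2\sim A^{2/3}(n_1n_2)^{-1/3}$ and $\beta_1 n_1\sim\beta_2 n_2\sim A^{1/3}(n_1n_2)^{1/3}$, whence $\log\Zbl\sim A^{1/3}(n_1n_2)^{1/3}$ and $\inp{\beta,n}\sim2A^{1/3}(n_1n_2)^{1/3}$. A direct computation with $\partial_z\LLL(z)=\LL(z)/z$ shows that this same calibration forces $\EEbl N\sim\cc(\ell)(n_1n_2)^{1/3}$, so the endpoint and vertex-number constraints are simultaneously met exactly along $k\sim\cc(\ell)(n_1n_2)^{1/3}$. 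Assembling the three terms,
\[
    \log\Zbl+\inp{\beta,n}-k\log\lambda\sim\bigl(3A^{1/3}-\cc(\ell)\log\ell\bigr)(n_1n_2)^{1/3}=\e(\ell)\,(n_1n_2)^{1/3}.
\]
It then remains to show $\log\PPbl(X=n,N=k)=o((n_1n_2)^{1/3})$, which I would obtain from a three-dimensional local limit theorem for the lattice vector $(X_1,X_2,N)$: as the $\omega(x)$ are independent, the characteristic function is an explicit product, and since the covariance determinant grows only polynomially in $|n|$, the local probability is of polynomial order, so its logarithm is negligible. The assumption $\log|n|\ll k$ enters precisely here, ensuring the vertex coordinate is non-degenerate and concentrated enough for the characteristic-function bounds to close.

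Finally, the regime $k=o((n_1n_2)^{1/3})$ corresponds to letting $\ell\to0$: one checks $\cc(\ell)\sim(\ell/\zeta(2))^{1/3}$ and $\e(\ell)\sim-\log(\ell)\cc(\ell)$, so that $\ell\asymp\zeta(2)k^3/(n_1n_2)$ and $\e(\ell)(n_1n_2)^{1/3}\sim k\log(n_1n_2/k^3)$, which is exactly the claimed $p(n;k)=(n_1n_2/k^3)^{k+o(k)}$. I expect the genuine difficulty to lie here, together with the local limit theorem: all the residue estimates for $\log\Zbl$ and its derivatives, and the local limit theorem itself, must be carried out uniformly as $\ell\to0$, a degenerate regime in which the differences $\zeta(s)-\Li_s(1-\ell)$ vanish and the covariance structure becomes singular. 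Controlling the error terms in this limit, rather than identifying the leading order, is the crux of the argument.
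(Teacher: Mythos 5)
Your proposal is correct and follows essentially the same route as the paper: the two-fugacity grand canonical ensemble, the exact identity \eqref{eq:Master}, the Mellin--Barnes contour representation of $\log\Zbl$ with the $\zeta(s)^{-1}$ factor from coprimality, the residue at $s=2$, the calibration of $(\beta,\lambda)$, and a three-dimensional local limit theorem, with the small-$k$ regime obtained by letting $\lambda\to0$. You also correctly single out the real difficulty — uniformity of the estimates as $\lambda\to0$ — which the paper resolves by tracking an explicit factor of $\lambda$ in the error bounds of Lemma~\ref{lem:derivatives_Z} (your constant $(\ell/\zeta(2))^{1/3}$ for $\cc(\ell)$ as $\ell\to0$ should be $\ell^{1/3}$, but this is immaterial at the stated precision).
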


\begin{figure}[h]
    \begin{center}
      \includegraphics{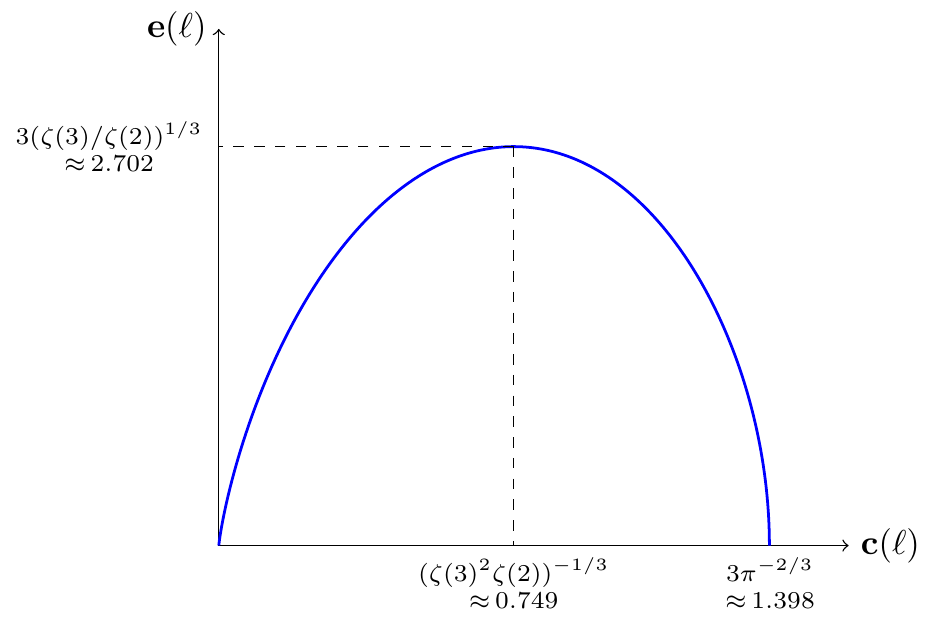}
      \caption[Distribution of the number of vertices]{Distribution of the number of vertices of a random convex polygonal line. The point of maximal $\e$-coordinate corresponds to typical lines. The point of maximal $\cc$-coordinate corresponds to lines with a maximal number of vertices. Note that the curve is not symmetric.}
    \end{center}
\end{figure}

\begin{remark}
    The function $\ell \mapsto \e(\ell)$ is maximal for $\ell=1$ and the corresponding coefficients are
    \begin{eqnarray*}
        \cc(1)=\dfrac{1}{(\zeta(2)\zeta(3)^2)^{1/3} },
         &
    \e(1)=3\left(\dfrac{\zeta(3)}{\zeta(2)}\right)^{1/3},
    \end{eqnarray*}
    which already recovers results \textbf{(a)} and \textbf{(b)}.
\end{remark}

\begin{remark}
	\label{rem:longest}
	As a byproduct of Theorem~\ref{thm:detailed_comb}, one can deduce the asymptotic behavior of the maximal number $M(n)$ of integral points that an increasing convex function satisfying $f(0)=0$ and $f(n)=n$ can interpolate. This question and its counterpart, concerning the maximal convex lattice polygons inscribed in a convex set was solved by Acketa and \Zunic~\cite{acketa_maximal_1995} who proved that $M_n \sim
3\pi^{-2/3}\,n^{2/3}$.

Starting from Theorem~\ref{thm:detailed_comb}, the proof goes as follows. We first notice that $\e(\lambda)$ tends to $0$ when $\lambda$ goes to
infinity. In the same time,
$ \cc(\lambda) \sim -\zeta(2)^{-1/3}\LL(1-\lambda)(-\LLL(1-\lambda))^{-2/3}$
which tends to ${3}\pi^{-2/3}$.
Since $\e(\lambda)$ remain strictly positive, we get $\liminf n^{-2/3}M(n) \geq
    3\pi^{-2/3}$.
    Now, let $\epsilon>0$ and suppose
    $ \limsup n^{-2/3} M(n) \geq 3\pi^{-2/3}(1+2\epsilon)$.
Then, for arbitrary large $n$, there is a polygonal line $\Gamma\in\Pi(n,n)$ having at least $3\pi^{-2/3}\,n^{2/3}(1+\epsilon)$ vertices. By choosing $k = 3\pi^{-2/3}\,n^{2/3}$ vertices among the vertices of this line, we get already a subset of $\Pi(n; k)$ whose cardinality is larger than $e^{cn^{2/3}}$ with $c > 0$. This enters in contradiction with the fact that $\lim_{\lambda\to\infty} \e(\lambda) = 0$.
\end{remark}

\subsection{Modification of \Sinai{}'s model and proof of Theorem~\ref{thm:detailed_comb}}

Recall from section~\ref{sec:correspondence} the set $\XX = \{(x_1,x_2) \in \Z_+^2 \mid \gcd(x_1,x_2) = 1\}$ of primitive vectors and the set $\Omega$ of functions $\omega \colon \XX \to \Z_+$ with finite support.
The restriction of \Jarnik's correspondence to the subspace $\Pi(n;k)$ induces a bijection with the subset \(\Omega(n;k)\) of \(\Omega\) consisting of multiplicity distributions \(\omega \in \Omega\) such that the ``observables''
\[
    X_1(\omega) := \sum_{x \in \XX} \omega(x)\cdot x_1,\quad
    X_2(\omega) := \sum_{x \in \XX} \omega(x)\cdot x_2,\quad 
    K(\omega) := \sum_{x \in \XX} \indicator{\omega(x) > 0}
\]
are respectively equal to \(n_1,n_2\) and \(k\). Notice that $X_1 = X_1^\infty$ and $X_2 = X_2^\infty$ with the previous notations. The random variables $X_1$ and $X_2$ correspond to the coordinates of the endpoint of the polygonal chain while $K$ counts its number of vertices.

For all $\lambda > 0$ and for every couple of parameters $\beta = (\beta_1,\beta_2) \in (0,+\infty)^2$, we endow \(\Omega\) with the probability measure defined for $\omega \in \Omega$ by
\begin{align*}
    \PPbl(\omega) & := \frac{1}{\Zbl}\exp\left[- \sum_{x \in \L} \omega(x)\, \beta \cdot x\right] \lambda^{K(\omega)}\\
                  & = \frac{1}{\Zbl} e^{-\beta_1 X_1(\omega)} e^{-\beta_2 X_2(\omega)} \lambda^{K(\omega)},
\end{align*}
where the \emph{partition function} $\Zbl$ is chosen as the normalization constant
\begin{equation}
	\label{eq:Zbl}
    \Zbl = \sum_{n \in \Z_+^2} \sum_{k\geq1} p(n ; k)\, e^{-\beta \cdot n} \lambda^k.
\end{equation}
Note that $\Zbl$ is finite for all values of the parameters $(\beta,\lambda) \in (0,+\infty)^3$. Indeed, if we denote by $p(n) = \sum_{k\geq 1} p(n;k)$ the total number of convex polygonal lines of $\Pi$ with end point $n = (n_1,n_2)$ and $M_n$ the maximal number of edges of such a line, the following bound holds:
\[
    \Zbl \leq \sum_{n \in \Z_+^2} p(n)\,  \max(1,\lambda)^{M_n} \, e^{-\beta \cdot n}.
\]
We use now the results of \cites{barany_limit_1995,vershik_limit_1994,sinai_probabilistic_1994} according to which $\log p(n)  = O(|n|^{2/3})$ and of \cite{acketa_maximal_1995} where Acketa and \Zunic have proven that $M_n = O(|n|^{2/3})$.
We will use in the sequel the additional remark that $\Zbl$ is an analytic function of $\lambda$ for all $\beta > 0$.

The partition function $Z$ is of crucial interest since its partial logarithmic derivatives are equal to expectations of macroscopic characteristics of the polygonal line. Namely, the expected coordinates of the endpoint of the line are given by:
$$
\EEbl[X_i]= \sum_{n \in \Z_+^2} \sum_{k\geq1} n_i\frac{p(n ; k)\, e^{-\beta \cdot n} \lambda^k}{Z(\beta,\lambda)}=-\frac{\partial}{\partial \beta_i} \log Z(\beta,\lambda), \qquad i \in \{1,2\}.
$$
Similarly, for $i,j \in \{1,2\}$, 
$$
\EEbl[K]=\lambda\frac{\partial}{\partial \lambda} \log Z(\beta,\lambda), \qquad \Cov_{\beta,\lambda}(X_i,X_j)=\frac{\partial^2}{\partial \beta_i\partial\beta_j} \log Z(\beta,\lambda).
$$

Taking $\lambda=1$, the probability $\PPbl$ is nothing but the two-parameter probability distribution introduced by \Sinai{}~\cite{sinai_probabilistic_1994}. Under the measure $\PPbl$, the variables $(\omega(x))_{x\in \L}$
are still independent, as in \Sinai{}'s framework,  but follow a geometric distribution only for
$\lambda=1$. In the general case, the measure \(\PPbl\) is absolutely continuous with respect to \Sinai{}'s measure with density proportional to $\lambda^{K(\cdot)}$ and the distribution of $\omega(x)$ is a biased geometric distribution. Loosely speaking, $\PPbl$ corresponds to the introduction of a {\it penalty} of the probability by a factor $\lambda$ each time a vertex appears. Strictly speaking, it is only a penalty when $\lambda < 1$ and a reward when $\lambda > 1$.

Since $\PPbl(\omega)$ depends only on the values of $X_1(\omega)$,
$X_2(\omega)$, and $K(\omega)$, we deduce
that the conditional distribution it induces on $\Omega(n_1,
n_2; k)$ is uniform. For instance, we have the following formula for all $(\beta,\lambda) \in (0,+\infty)^2\times (0,+\infty)$ which will be instrumental in the proof:
\begin{equation}
	\label{eq:Master}
    p(n_1,n_2;k)=Z(\beta,\lambda)\, e^{\beta_1 n_1} e^{\beta_2 n_2} \lambda^{-k} 
 \,\PPbl[X_1 = n_1, X_2 = n_2, K =  k].
\end{equation}

In order to get a logarithmic equivalent of $ p(n_1,n_2;k)$, our strategy is
to choose the three parameters so that 
\[
    \EEbl\left[X_1\right] = n_1,\quad
    \EEbl\left[X_2\right] = n_2,\quad
    \EEbl\left[K\right]=k.
\]
This will indeed lead to an asymptotic equivalent of $\PPbl[X_1 = n_1, X_2 = n_2, K =  k]$ due to a local limit result. This equivalent having polynomial decay, it will not interfere with the estimation of $\log p(n_1,n_2;k)$. The analysis of the partition function in the next subsection leads to a calibration of the parameters $\beta_1, \beta_2, \lambda$ satisfying the above conditions. Lemma~\ref{lem:parameters} shows that, for this calibration, 
$$
k\sim \cc(\lambda) (n_1n_2)^{1/3}
$$
and
$$
\log\Zbl\sim\beta_1 n_1\sim \beta_2 n_2 \sim \left(\frac{\zeta(3)-\Li_3(1-\lambda)}{\zeta(2)}\right)^{1/3}(n_1n_2)^{1/3}.
$$
Furthermore,  Theorem~\ref{thm:local_limit} implies that $\log\PPbl[X_1 = n_1, X_2 = n_2, K =  k]=O(\log n)$. So finally, Theorem~\ref{thm:detailed_comb} follows readily by plugging these estimates into \eqref{eq:Master}.
Note that the case $k = o(|n|^{2/3})$ corresponds to $\lambda$ going to $0$, and the above asymptotics become, as stated in Theorem~\ref{thm:detailed_comb},
\[
  \lambda \sim \frac{k^3}{n_1n_2}, \qquad \text{and}\qquad \log Z \sim \beta_1n_1 \sim \beta_2 n_2 \sim k.
\]
As a consequence, the term $\lambda^{-k}$ dominates the asymptotic in \eqref{eq:Master}, which concludes the proof.

\subsection{Estimates of the logarithmic partition function and its derivatives}

We need in the following, the analogue to the Barnes bivariate zeta function defined for $\beta=(\beta_1,\beta_2) \in (0,+\infty)^2$ by
\[
    \zeta_2^*(s ; \beta) := \sum_{x \in\L} (\beta_1 x_1 + \beta_2 x_2)^{-s},
\]
this series being convergent for \(\Re(s) > 2\). The following preliminary lemma gives useful properties of this function. This will be done by expressing this function in terms of the Barnes zeta function $\zeta_2(s,w ; \beta)$ which is defined by analytic continuation of the series
$$
\zeta_2(s,w ; \beta) = \sum_{n \in \Z_+^2} (w + \beta_1 n_1 + \beta_2 n_2)^{-s}, \qquad \Re(s) > 2, \Re(w) > 0.
$$
It is well known that $\zeta_2(s,w ; \beta)$ has a meromorphic continuation to the complex $s$-plane with simple poles at $s = 1$ and $2$, and that the residue at $s = 2$ is simply \((\beta_1\beta_2)^{-1}\). In the next lemma, we derive the relation between $\zeta_2$ and $\zeta_2^*$, and we also establish an explicit meromorphic continuation of $\zeta_2$ to the half-plane $\Re(s) > 1$ in order to obtain later polynomial bounds for $|\zeta_2^*(s)|$ as $|\Im(s)| \to +\infty$.  
Before the statement, let us recall that the fractional part $\{x\} \in [0,1)$ of a real number $x \in \RR$ is defined as $\{x\} = x - \lfloor x \rfloor$.

\begin{lemma}
\label{lem:meromorphic_continuation}
The functions $\zeta_2(s,w ; \beta)$ and $\zeta_2^*(s ; \beta)$ have a meromorphic continuation to the complex plane.
\begin{enumerate}[(i)]
    \item The meromorphic continuation of $\zeta_2(s,w ; \beta)$ to the half-plane $\Re(s) > 1$ is given by
\begin{align*}
\zeta_2(s,w ;\beta) &= \frac{1}{\beta_1\beta_2}\frac{w^{-s+2}}{(s-1)(s-2)} + \frac{(\beta_1 + \beta_2)w^{-s+1}}{2\beta_1\beta_2(s-1)} + \frac{w^{-s}}{4}\\
& - \frac{\beta_2}{\beta_1} \int_0^{+\infty} \frac{\{y\}-\frac{1}{2}}{(w+\beta_2 y)^s}\,dy - \frac{\beta_1}{\beta_2} \int_0^{+\infty} \frac{\{x\}-\frac{1}{2}}{(w+\beta_1 x)^s}\,dx\\
& - s\frac{\beta_2}{2} \int_0^{+\infty} \frac{\{y\}-\frac{1}{2}}{(w+\beta_2 y)^{s+1}}dy
- s\frac{\beta_1}{2} \int_0^{+\infty} \frac{\{x\}-\frac{1}{2}}{(w+\beta_1 x)^{s+1}}dx\\
& + s(s+1)\beta_1\beta_2 \int_0^{+\infty}\int_0^{+\infty} \frac{(\{x\} - \frac{1}{2})(\{y\}-\frac{1}{2})}{(w+\beta_1 x +\beta_2 y)^{s+2}}\,dxdy.
\end{align*}
\item The meromorphic continuation of $\zeta_2^*(s;\beta)$ is given for all $s \in \CC$ by
$$
\zeta_2^*(s;\beta) =  \frac{1}{\beta_1^s} + \frac{1}{\beta_2^s} + \frac{\zeta_2(s,\beta_1+\beta_2 ; \beta)}{\zeta(s)}.
$$
\end{enumerate}
\end{lemma}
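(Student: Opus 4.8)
The plan is to establish (i) by applying the first-order Euler--Maclaurin summation formula twice, once in each lattice variable, and (ii) by the unique factorization of a lattice vector into its primitive part times its greatest common divisor, combined with the meromorphic continuation of $\zeta_2$ and $\zeta$.

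For (i), I would start from the Euler--Maclaurin formula
\[
    \sum_{n\geq 0} f(n) = \int_0^\infty f(x)\,dx + \frac{f(0)}{2} + \int_0^\infty \Bigl(\{x\}-\tfrac12\Bigr) f'(x)\,dx,
\]
valid for $f\in C^1([0,\infty))$ decaying suitably at infinity. Applying it to the inner sum over $n_1$ with $f(x)=(w+\beta_1 x+\beta_2 n_2)^{-s}$ yields three pieces: the elementary integral $\frac{1}{\beta_1(s-1)}(w+\beta_2 n_2)^{-s+1}$, the boundary term $\frac12(w+\beta_2 n_2)^{-s}$, and the first-order remainder $-s\beta_1\int_0^\infty(\{x\}-\frac12)(w+\beta_1 x+\beta_2 n_2)^{-s-1}\,dx$. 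I would then apply Euler--Maclaurin a second time to the sum over $n_2$ of each piece. The leading integral of the first piece produces the double-pole term $w^{-s+2}/[\beta_1\beta_2(s-1)(s-2)]$; the various boundary terms recombine, symmetrically in $\beta_1,\beta_2$, into the simple-pole term and the constant $w^{-s}/4$; the first-order remainders of this second application produce the four single integrals against $\{y\}-\frac12$ and $\{x\}-\frac12$; and the iterated two-dimensional remainder produces the mixed double integral. Collecting these contributions gives exactly the displayed formula, a priori for $\Re(s)>2$.

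To see that the right-hand side continues $\zeta_2$ to $\Re(s)>1$, I would check convergence of each integral there. Setting $P(y)=\int_0^y(\{t\}-\frac12)\,dt=\frac12\{y\}(\{y\}-1)$, which is bounded and $1$-periodic with $P(0)=0$, a single integration by parts turns each single integral into an absolutely convergent one for $\Re(s)>0$; the double integral converges absolutely for $\Re(s)>0$ from the crude bound $|(\{x\}-\frac12)(\{y\}-\frac12)|\leq\frac14$ together with $\iint(w+\beta_1 x+\beta_2 y)^{-\Re(s)-2}\,dx\,dy<\infty$. Hence the right-hand side is holomorphic on $\Re(s)>1$ apart from the explicit poles at $s=1,2$, which proves (i); the continuation to all of $\CC$ with poles only at $s=1,2$ is the classical fact about Barnes' zeta function recalled before the statement. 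For (ii), every $m\in\Z_+^2\setminus\{(0,0)\}$ factors uniquely as $m=d\,x$ with $d=\gcd(m_1,m_2)\geq1$ and $x\in\XX$. Summing $(\beta\cdot m)^{-s}=d^{-s}(\beta\cdot x)^{-s}$ over this factorization gives, for $\Re(s)>2$,
\[
    \zeta(s)\,\zeta_2^*(s;\beta) = \sum_{m\in\Z_+^2\setminus\{(0,0)\}} (\beta_1 m_1 + \beta_2 m_2)^{-s}.
\]
I would then split the lattice sum according to whether $m$ sits on a coordinate axis or in the open quadrant. The two axes contribute $\sum_{m_1\geq1}(\beta_1 m_1)^{-s}+\sum_{m_2\geq1}(\beta_2 m_2)^{-s}=(\beta_1^{-s}+\beta_2^{-s})\zeta(s)$, while the substitution $m_i=n_i+1$ identifies the open-quadrant sum with $\sum_{n\in\Z_+^2}(\beta_1+\beta_2+\beta_1 n_1+\beta_2 n_2)^{-s}=\zeta_2(s,\beta_1+\beta_2;\beta)$. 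Dividing by $\zeta(s)$ gives the stated identity for $\Re(s)>2$, and since $\zeta$ and $\zeta_2$ are meromorphic on $\CC$, this identity provides the meromorphic continuation of $\zeta_2^*$ to the whole plane, proving (ii).

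The main obstacle is the bookkeeping in (i), specifically the second Euler--Maclaurin step carried out inside the first-order remainder of the first step, where the summand is itself the $x$-integral against $\{x\}-\frac12$. One must differentiate under this integral, pick up the factor $-(s+1)\beta_2$ from $\partial_{n_2}$, and re-sum to produce the mixed double integral with precisely the prefactor $s(s+1)\beta_1\beta_2$, together with the single $\{x\}$-integrals carrying $\frac{\beta_1}{\beta_2}$ and $\frac{s\beta_1}{2}$. Tracking these constants and the symmetrization between the $\beta_1$- and $\beta_2$-lines is the only delicate point; the convergence estimates and the whole of (ii) are routine.
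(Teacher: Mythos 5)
Your proposal is correct and follows essentially the same route as the paper: a double application of the first-order Euler--Maclaurin formula (one per lattice variable) for (i), and the factorization of the full quadrant sum as $\zeta(s)$ times the primitive-vector sum, identified with $\zeta_2(s,\beta_1+\beta_2;\beta)$ after the shift $m_i = n_i+1$, for (ii). The bookkeeping you describe (the prefactors $\frac{\beta_2}{\beta_1}$, $\frac{s\beta_i}{2}$ and $s(s+1)\beta_1\beta_2$) checks out against the displayed formula, and your convergence remarks for $\Re(s)>1$ are sound.
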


\begin{proof}
We apply the Euler-Maclaurin formula to the partial summation defined by $F(x) = \sum_{n_2\geq0} (w +\beta_1 x + \beta_2 n_2)^{-s}$, leading to
$$
\sum_{n_1 \geq 1} F(n_1) = \int_0^{\infty} F(x)\,dx - \frac{F(0)}{2} + \int_0^\infty (\{x\}-\frac{1}{2})F'(x)\,dx.
$$
We use again the Euler-Maclaurin formula for each of the summations in $n_2$ to obtain \emph{(i)}.

In order to prove  \emph{(ii)}, we express $\zeta_2^*(s;\beta)$ in terms of $\zeta_2(s,\beta_1+\beta_2;\beta)$ for all $s$ with real part $\Re(s) > 2$. The result will follow from the analytic continuation principle. By definition of $\zeta_2^*(s;\beta)$,
$$
\zeta(s)\left[\zeta_2^*(s;\beta) - \frac{1}{\beta_1^s} - \frac{1}{\beta_2^s}\right] = \left[\sum_{d\geq1}\frac1{d^s}\right]\left[\sum_{\substack{x_1,x_2 \geq 1\\ \gcd(x_1,x_2)=1}} \frac{1}{(\beta_1 x_1 + \beta_2 x_2)^s}\right]=\sum_{x_1,x_2 \geq 1} \frac{1}{(\beta_1 x_1 + \beta_2 x_2)^s}.
$$
\end{proof}

Now we make the connection between these zeta functions and the logarithmic partition function of our modified \Sinai's model.

\begin{lemma}
\label{lem:integral_partition_function}
Let $c > 2$. For all parameters $(\beta,\lambda) \in (0,+\infty)^2 \times (0,+\infty)$,
$$
\log \Zbl = \frac{1}{2i\pi} \int_{c - i\infty}^{c + i\infty} (\zeta(s+1) - \Li_{s+1}(1-\lambda))\zeta_2^*(s;\beta)\Gamma(s)ds.
$$
\end{lemma}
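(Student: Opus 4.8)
The plan is to compute $\log\Zbl$ explicitly from the product structure of $\PPbl$ and then to recognise the resulting Dirichlet-type series as an inverse Mellin transform. First, since $K(\omega)=\sum_{x\in\XX}\indicator{\omega(x)>0}$, the weight $e^{-\sum_x\omega(x)\,\beta\cdot x}\lambda^{K(\omega)}$ factorises over the sites $x\in\XX$, and summing over all finite-support $\omega$ (by Tonelli, all terms being positive) gives $\Zbl=\prod_{x\in\XX}Z_x$ with the one-site partition function
\[
    Z_x=1+\lambda\sum_{m\geq1}e^{-m\,\beta\cdot x}=\frac{1-(1-\lambda)q_x}{1-q_x},\qquad q_x:=e^{-\beta\cdot x}\in(0,1).
\]
Taking logarithms, $\log\Zbl=\sum_{x\in\XX}\big(\log(1-(1-\lambda)q_x)-\log(1-q_x)\big)$, a convergent series since each summand is $O(q_x)$ and $\sum_x q_x<\infty$.

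Next I would carry out the main computation under the temporary assumption $0<\lambda<2$, so that $|1-\lambda|<1$ and hence $|1-\lambda|\,q_x<1$ for every $x$. In this regime both logarithms may be expanded as power series, yielding
\[
    \log\Zbl=\sum_{x\in\XX}\sum_{k\geq1}\frac{1-(1-\lambda)^k}{k}\,e^{-k\,\beta\cdot x}.
\]
I then insert the Cahen--Mellin representation $e^{-u}=\frac{1}{2i\pi}\int_{c-i\infty}^{c+i\infty}\Gamma(s)u^{-s}\,ds$, valid for $u>0$ and any $c>0$, with $u=k\,\beta\cdot x$ and $c>2$, and interchange the two sums with the integral. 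This interchange is the technical core, and it is justified by absolute convergence: on the line $\Re(s)=c$ one has $|k^{-s}(\beta\cdot x)^{-s}|=k^{-c}(\beta\cdot x)^{-c}$, so the total mass is bounded by
\[
    \Big(\int_{-\infty}^{+\infty}|\Gamma(c+it)|\,dt\Big)\Big(\sum_{k\geq1}\frac{|1-(1-\lambda)^k|}{k^{c+1}}\Big)\Big(\sum_{x\in\XX}(\beta\cdot x)^{-c}\Big),
\]
which is finite because $\Gamma$ decays exponentially on vertical lines, because $\sum_k|1-(1-\lambda)^k|k^{-c-1}\leq 2\zeta(c+1)$ when $|1-\lambda|<1$, and because $\sum_x(\beta\cdot x)^{-c}=\zeta_2^*(c;\beta)<\infty$ for $c>2$. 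Performing the inner summations then yields $\sum_{k\geq1}(1-(1-\lambda)^k)k^{-s-1}=\zeta(s+1)-\Li_{s+1}(1-\lambda)$ and $\sum_{x\in\XX}(\beta\cdot x)^{-s}=\zeta_2^*(s;\beta)$, which is exactly the asserted formula, so far for $0<\lambda<2$.

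Finally I would remove the restriction $\lambda<2$ by analytic continuation in $\lambda$. The left-hand side $\log\Zbl$ is analytic in $\lambda$ on a complex neighbourhood of $(0,+\infty)$ (the analyticity of $\Zbl$ in $\lambda$ was already noted, and the product representation shows it is non-vanishing there, so its logarithm stays analytic). For the right-hand side, $\lambda\mapsto\Li_{s+1}(1-\lambda)$ is analytic whenever $1-\lambda\notin[1,+\infty)$, and using the integral representation of $\Li_{s+1}$ together with the gain $\Gamma(s)/\Gamma(s+1)=1/s$ and the boundedness of $\zeta_2^*$ on $\Re(s)=c$, one checks that the factor $\Gamma(s)\Li_{s+1}(1-\lambda)$ decays fast enough for the integral to converge locally uniformly in $\lambda$, hence to define an analytic function of $\lambda$. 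Since the two sides coincide on the interval $(0,2)$, which has an accumulation point, the identity theorem forces them to agree on all of $(0,+\infty)$.

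The main obstacle is precisely this last point. For $\lambda\geq2$ the series $\sum_k(1-\lambda)^k k^{-s-1}$ diverges and the term-by-term manipulation of the second paragraph is no longer available, so the formula for large $\lambda$ can only be reached through the continuation argument; this in turn hinges on the decay estimate for $\Gamma(s)\Li_{s+1}(1-\lambda)$ along the vertical line, which is where the integral representation of the polylogarithm, rather than its series, becomes essential.
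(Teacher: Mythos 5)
Your proposal is correct and follows essentially the same route as the paper: factorize $\Zbl$ over sites, expand the logarithm into the Lambert-type series $\sum_{x}\sum_{r\geq1}\frac{1-(1-\lambda)^r}{r}e^{-r\beta\cdot x}$ for small $\lambda$, apply the Cahen--Mellin representation with Fubini, and extend to all $\lambda>0$ by analytic continuation using the integral representation of $(\zeta(s+1)-\Li_{s+1}(1-\lambda))\Gamma(s+1)$ to control the decay on the vertical line (the paper makes the "decays fast enough" step explicit via repeated integration by parts, yielding a bound $C_N\lambda(1+|\tau|)^{-N}$). The only cosmetic difference is that you run the direct computation on $0<\lambda<2$ rather than $0<\lambda<1$ before continuing.
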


\begin{proof}
Given the product form of the distribution $\PPbl$, we see that the random variables $\omega(x)$ for $x \in\L$ are mutually independent. Moreover, the marginal distribution of $\omega(x)$ is a biased geometric distribution. It is absolutely continuous with respect to the geometric distribution of parameter $e^{-\beta \cdot x}$ with density proportional to $k \mapsto \lambda^{1_{k > 0}}$. In other words, for all $k \in \Z_+$,
$$
\PPbl[\omega(x) = k] = Z_x(\beta, \lambda)^{-1} e^{-k \beta\cdot x} \lambda^{1_{k > 0}}
$$
where the normalization constant $Z_x(\beta, \lambda) = 1 + \lambda\dfrac{e^{-\beta \cdot x}}{1-e^{-\beta \cdot x}}$ is easily computed. We can now deduce the following product formula for the partition function:
\[
    \Zbl = \prod_{x\in\L} Z_x(\beta, \lambda) = \prod_{x\in\L} \left(1 + \lambda\frac{e^{-\beta \cdot x}}{1-e^{-\beta \cdot x}}\right).
\]
For now, we assume that $\lambda \in (0,1)$. Taking the logarithm of the product above
\begin{align*}
    \log \Zbl & = \sum_{x \in \Lstar} \log \left(1 + \lambda\frac{e^{-\beta \cdot x}}{1-e^{-\beta \cdot x}}\right) \\
    &= \sum_{x\in \Lstar} \log(1-(1-\lambda)e^{-\beta\cdot x}) - \sum_{x\in\Lstar} \log(1-e^{-\beta\cdot x}) \\
    &= \sum_{x\in\Lstar} \sum_{r \geq 1} \frac{1 - (1-\lambda)^r}{r} e^{-r\beta \cdot x}.
\end{align*}
Now we use the fact that the Euler gamma function $\Gamma(s)$ and the exponential function are related through Mellin's inversion formula
$$
e^{-z} = \frac{1}{2i\pi} \int_{c - i\infty}^{c + i\infty} \Gamma(s) z^{-s} ds,
$$
for all $c > 0$ and $z \in \CC$ with positive real part. Choosing $c > 2$ so that the series and the integral all converge and applying the Fubini theorem, this yields
\begin{align*}
    \log \Zbl & = \frac{1}{2i\pi} \sum_{x\in\Lstar} \sum_{r\geq 1} \int_{c - i\infty}^{c + i\infty}\frac{1-(1-\lambda)^r}{r} r^{-s} (\beta \cdot x)^{-s} \Gamma(s) ds \\
    & = \frac{1}{2i\pi} \int_{c - i\infty}^{c + i\infty} (\zeta(s+1) - \Li_{s+1}(1-\lambda))\zeta_2^*(s;\beta)\Gamma(s)\,ds.
    \end{align*}

    The lemma is proven for all $\lambda \in (0,1)$. The extension to $\lambda > 0$ will now result from analytic continuation. We already noticed that the left hand term is analytic in $\lambda$ for all fixed $\beta$. Proving the analyticity of the right hand term requires only to justify the absolute convergence of the integral on the vertical line. From Lemma~\ref{lem:meromorphic_continuation}, we know that $\zeta_2^*(c + i\tau ; \beta)$ is polynomially bounded as $|\tau|$ tends to infinity. Taking $s = c - 1 + i\tau$, successive integrations by parts of the formula
    \[
    	(\zeta(s+1) - \Li_{s+1} (1-\lambda))\Gamma(s+1) = \lambda \int_0^\infty \frac{e^x x^s}{(e^x - 1)(e^x - 1 + \lambda)} \,dx
    \]
    show for all integer $N > 0$, there exists a constant $C_N > 0$ such that, uniformly in $\tau$, 
    \begin{equation}
    	\label{eq:decay_lambda}
    	\bigl | (\zeta(s+1) - \Li_{s+1} (1-\lambda))\Gamma(s+1) \bigr | \leq \frac{ C_N\lambda}{(1+|\tau|)^N}.
    \end{equation}
\end{proof}

Finally, the next Lemma makes use of the contour integral representation of $\log Z(\beta,\lambda)$ to derive at the same time an asymptotic formula for each one of its derivatives.

\begin{lemma}
\label{lem:derivatives_Z}
Let $(p,q_1,q_2) \in \ZZ_+^3$. For all $\epsilon > 0$, there exists $C > 0$ such that
$$
\left|\left[\lambda\frac{\partial}{\partial \lambda}\right]^p\left[\frac{\partial}{\partial \beta_1}\right]^{q_1}\left[\frac{\partial}{\partial \beta_2}\right]^{q_2} \left( \log \Zbl - \frac{\zeta(3)-\Li_3(1-\lambda)}{\zeta(2)\beta_1\beta_2}\right)\right| \leq \frac{C\,\lambda}{|\beta|^\kappa}
$$
with $\kappa = q_1 + q_2 + 1 + \epsilon$, uniformly in the region $\{(\beta,\lambda) \mid \epsilon < \frac{\beta_1}{\beta_2} < \frac{1}{\epsilon} \text{ and } 0 < \lambda < \frac{1}{\epsilon}\}$. 
\end{lemma}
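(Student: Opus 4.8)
The plan is to feed the contour representation of Lemma~\ref{lem:integral_partition_function} into a residue computation. Write the integrand as $G(s):=(\zeta(s+1)-\Li_{s+1}(1-\lambda))\,\zeta_2^*(s;\beta)\,\Gamma(s)$. By Lemma~\ref{lem:meromorphic_continuation} and the fact that $\zeta(s)\neq 0$ for $\Re(s)>1$, the only singularity of $G$ in the strip $1<\Re(s)\le c$ is the simple pole of $\zeta_2^*(\cdot;\beta)$ at $s=2$. Since $\Gamma(2)=1$ and $\Res_{s=2}\zeta_2^*(s;\beta)=1/(\zeta(2)\beta_1\beta_2)$, its residue is precisely $(\zeta(3)-\Li_3(1-\lambda))/(\zeta(2)\beta_1\beta_2)$. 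Moving the contour from $\Re(s)=c$ to $\Re(s)=1+\epsilon$ — the horizontal segments vanishing because the rapid decay of $\Gamma$ beats the polynomial growth of $\zeta_2^*$ — I obtain
\[
\log\Zbl-\frac{\zeta(3)-\Li_3(1-\lambda)}{\zeta(2)\beta_1\beta_2}=\frac{1}{2i\pi}\int_{1+\epsilon-i\infty}^{1+\epsilon+i\infty}G(s)\,ds .
\]

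It then remains to differentiate and bound this last integral. As the $\beta$-dependence sits entirely in $\zeta_2^*(s;\beta)$ and the $\lambda$-dependence entirely in $\zeta(s+1)-\Li_{s+1}(1-\lambda)$, the operators factor cleanly:
\[
[\lambda\partial_\lambda]^p\,\partial_{\beta_1}^{q_1}\partial_{\beta_2}^{q_2}G(s)=\bigl([\lambda\partial_\lambda]^p(\zeta(s+1)-\Li_{s+1}(1-\lambda))\bigr)\,\Gamma(s)\,\partial_{\beta_1}^{q_1}\partial_{\beta_2}^{q_2}\zeta_2^*(s;\beta).
\]
Differentiation under the integral sign will be justified a posteriori by the bounds below. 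Thus the problem splits into two estimates on the line $\Re(s)=1+\epsilon$: rapid decay in $\tau:=\Im(s)$ of the $\lambda$-factor, and a polynomial-in-$\tau$ bound for the $\beta$-factor carrying the right power of $|\beta|$.

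For the $\lambda$-factor I would differentiate the integral representation underlying \eqref{eq:decay_lambda}, namely $(\zeta(s+1)-\Li_{s+1}(1-\lambda))\Gamma(s+1)=\int_0^\infty\frac{e^x x^s}{e^x-1}\phi(\lambda,x)\,dx$ with $\phi(\lambda,x)=\lambda/(e^x-1+\lambda)$. The logistic identity $\lambda\partial_\lambda\phi=\phi(1-\phi)$ shows that $[\lambda\partial_\lambda]^p\phi$ is a polynomial in $\phi$ vanishing to first order at $\phi=0$, so $|[\lambda\partial_\lambda]^p\phi|\le C_p\phi\le C_p\,\lambda/(e^x-1)$ uniformly for $0<\lambda<1/\epsilon$; repeating the integrations by parts in $x$ that produce \eqref{eq:decay_lambda} then gives, for every $N$, $\bigl|[\lambda\partial_\lambda]^p(\zeta(s+1)-\Li_{s+1}(1-\lambda))\Gamma(s)\bigr|\le C_{N,p}\,\lambda\,(1+|\tau|)^{-N}$, with the single factor of $\lambda$ preserved. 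For the $\beta$-factor I would differentiate the explicit continuation of Lemma~\ref{lem:meromorphic_continuation}: the elementary terms and the Euler--Maclaurin integrals contribute only polynomial growth $(1+|\tau|)^A$ in $\tau$, while the homogeneity $\zeta_2^*(s;t\beta)=t^{-s}\zeta_2^*(s;\beta)$ forces $\partial_{\beta_1}^{q_1}\partial_{\beta_2}^{q_2}\zeta_2^*(s;\beta)$ to be homogeneous of degree $-s-q_1-q_2$; under the cone constraint $\epsilon<\beta_1/\beta_2<1/\epsilon$ the remaining directional factor is bounded, yielding $|\partial_{\beta_1}^{q_1}\partial_{\beta_2}^{q_2}\zeta_2^*(s;\beta)|\le C(1+|\tau|)^A|\beta|^{-(1+\epsilon)-q_1-q_2}$ on the contour. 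Multiplying the two bounds and integrating $d\tau$ with $N>A+1$ produces the claim, the exponent $\kappa=q_1+q_2+1+\epsilon$ being the abscissa $1+\epsilon$ of the shifted contour increased by the number of $\beta$-derivatives.

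The analytic skeleton (residue extraction, factorization, integration) is routine; the main obstacle is the second estimate. Obtaining the polynomial-in-$|\tau|$ bound for the $\beta$-derivatives of $\zeta_2^*$ with simultaneously the exact homogeneity power and uniformity over the whole cone forces one to differentiate the continuation of Lemma~\ref{lem:meromorphic_continuation} term by term and to control the double Euler--Maclaurin integrals — and this is where the hypothesis $n_1\asymp n_2$, i.e.\ the cone condition keeping $\beta$ away from the axes, is genuinely needed. A secondary point is to check that each application of $\lambda\partial_\lambda$ leaves the prefactor linear in $\lambda$ rather than constant or of higher degree; the logistic identity above makes this transparent and explains why the bound degrades by no power of $\lambda$ as $p$ increases.
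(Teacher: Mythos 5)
Your proposal is correct and follows essentially the same route as the paper: shift the contour of the representation from Lemma~\ref{lem:integral_partition_function} to $\Re(s)=1+\epsilon$, extract the residue $(\zeta(3)-\Li_3(1-\lambda))/(\zeta(2)\beta_1\beta_2)$ at $s=2$, differentiate under the integral, and combine the rapid decay in $\Im(s)$ of the $\lambda$-factor (with a single surviving power of $\lambda$) with a polynomial-in-$\Im(s)$ bound of the correct homogeneity $|\beta|^{-(1+\epsilon)-q_1-q_2}$ for the $\beta$-derivatives of $\zeta_2^*$ coming from Lemma~\ref{lem:meromorphic_continuation}. Your logistic identity $\lambda\partial_\lambda\phi=\phi(1-\phi)$ is a cleaner way to see what the paper handles by ``a reasoning similar to \eqref{eq:decay_lambda}'', but it is the same argument.
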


\begin{proof}
    Lemma~\ref{lem:integral_partition_function} provides an integral representation of the logarithmic partition function $\log\Zbl$. We will use the residue theorem to shift the contour of integration from the vertical line $\Re(s) = 3$ to the line $\Re(s) = 1 + \epsilon$. Lemma~\ref{lem:meromorphic_continuation} shows that the function $M(s) := (\zeta(s+1)-\Li_{s+1}(1-\lambda)))\zeta_2^*(s;\beta)\Gamma(s)$ is meromorphic in the strip $1 < \Re(s) < 3$ with a single pole at $s=2$, where the residue is given by
$$
\frac{\zeta(3)-\Li_3(1-\lambda)}{\zeta(2)} \cdot \frac{1}{\beta_1\beta_2}
$$ 
From the inequality~\eqref{eq:decay_lambda}, Lemma~\ref{lem:meromorphic_continuation} and the fact that $|\zeta(s)|$ has no zero with $\Re(s) > 1$, we see that $M(s)$ vanishes uniformly in $1 + \epsilon \leq \Re(s) \leq 3$ when $|\Im(s)|$ tends to $+ \infty$. By the residue theorem,
\begin{equation}
\label{eq:residue_theorem}
\log \Zbl= \frac{\zeta(3)-\Li_3(1-\lambda))}{\zeta(2)\beta_1\beta_2}+ \frac{1}{2i\pi} \int_{1 + \epsilon - i\infty}^{1 + \epsilon + i\infty} M(s)\,ds.
\end{equation}
From the Leibniz rule applied in the formula of Lemma~\ref{lem:meromorphic_continuation}~(i), we obtain directly the meromorphic continuation of $\frac{\partial^{q_1}}{\partial \beta_1^{q_1}}\frac{\partial^{q_2}}{\partial \beta_2^{q_2}}\zeta_2(s,\beta_1+\beta_2;\beta)$ in the half-plane $\Re(s) > 1$. We also obtain the existence of a constant $C > 0$ such that
\[
    \left|\left[\frac{\partial}{\partial \beta_1}\right]^{q_1}\left[\frac{\partial}{\partial \beta_2}\right]^{q_2}\zeta_2(1 + \epsilon + i\tau ,\beta_1+\beta_2;\beta)\right| \leq  \frac{C\, |\tau|^{2 + q_1 + q_2}}{|\beta|^\kappa}
\]
with $\kappa = q_1 + q_2 + 1 + \epsilon$.
A reasoning similar to the one we have used in order to derive \eqref{eq:decay_lambda} shows that for all integers $p$ and $N > 0$, there exists a constant $C_{p,N}$ such that, uniformly in $\tau$, 
    \[
    	\left | \left[\lambda \frac{\partial}{\partial\lambda}\right]^p (\zeta(s+1) - \Li_{s+1} (1-\lambda))\Gamma(s+1) \right | \leq \frac{ C_{p,N}\,\lambda}{(1+|\tau|)^N}.
    \]

    In order to differentiate both sides of equation~\eqref{eq:residue_theorem} and permute the partial derivatives and the integral sign, we have to mention the fact that the Riemann zeta function is bounded from below on the line $\Re(s) = 1 + \epsilon$ and that the derivatives of $\Li_s(1-\lambda)$ with respect to $\lambda$ are all bounded. This also gives the announced bound on the error term.
\end{proof}

\subsection{Calibration of the shape parameters}
\label{sec_conv_calibration}

When governed by the Gibbs measure \(\PPbl\), the expected value of the random vector with components
\[
    X_1(\omega) = \sum_{x\in\L} \omega(x) x_1, \quad
    X_2(\omega) = \sum_{x\in\L} \omega(x) x_2, \quad
    K(\omega) = \sum_{x\in\L} \indicator{\omega(x) > 0},
\]
is simply given by the logarithmic derivatives of the partition function \(\Zbl\). Remember that we planned to choose \( \lambda \) and  \( \beta_1, \beta_2 \) as functions of \(n = (n_1,n_2)\) an \(k\) in order for the probability $\PP[X_1 = n_1, X_2 = n_2, K = k]$ to be maximal, which is equivalent to $\EE(X_1)=n_1$, $\EE(X_2)=N_2$ and $\EE(K)=k$. We address this question in the next lemma.

\begin{lemma}
    \label{lem:parameters}
    Assume that $n_1,n_2,k$ tend to infinity with $n_1 \asymp n_2$ and $|k| = O(|n|^{2/3})$. There exists a unique choice of $(\beta_1,\beta_2,\lambda)$ as functions of $(n,k)$ such that
    \[
        \EEbl[X_1] = n_1, \quad \EEbl[X_2] = n_2, \quad \EEbl[K] = k.
    \]
    Moreover, they satisfy
    \begin{equation}
        \label{eq:conv_parameters}
        n_1 \sim \frac{\zeta(3) - \Li_3(1-\lambda)}{\zeta(2)({\beta_1})^2{\beta_2}}, \quad n_2 \sim \frac{\zeta(3)-\Li_3(1-\lambda)}{\zeta(2){\beta_1}({\beta_2})^2}, \quad
                k \sim -\frac{\lambda\partial_\lambda
             \Li_3(1-\lambda)}{\zeta(2){\beta_1}{\beta_2}}.
     \end{equation}
    If $k = o(|n|^{2/3})$, then $\lambda$ goes to $0$ and the above relations yield
    \[
        \beta_1 \sim \frac{k}{n_1}, \quad \beta_2 \sim \frac{k}{n_2}, \quad \lambda \sim \frac{k^3}{n_1n_2}.
    \]
\end{lemma}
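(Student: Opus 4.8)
The plan is to read the three equations $\EEbl[X_1]=n_1$, $\EEbl[X_2]=n_2$, $\EEbl[K]=k$ as the inversion of the mean-value map of an exponential family, deriving uniqueness from convexity and the asymptotics \eqref{eq:conv_parameters} by feeding in the leading term of $\log\Zbl$ supplied by Lemma~\ref{lem:derivatives_Z}. For uniqueness I would pass to the natural parameters $\theta=(\beta_1,\beta_2,-\log\lambda)$, in which $\PPbl(\omega)$ is proportional to $\exp(-\theta\cdot(X_1,X_2,K)(\omega))$, so that $\log\Zbl$ is the log-Laplace transform of the law of $(X_1,X_2,K)$ and hence convex in $\theta$, with Hessian equal to the covariance matrix of $(X_1,X_2,K)$. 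This matrix is positive definite: the empty configuration together with the three single-vertex configurations supported at $(1,0)$, $(0,1)$, $(1,1)$ all have positive $\PPbl$-probability and produce the value-vectors $(0,0,0)$, $(1,0,1)$, $(0,1,1)$, $(1,1,1)$, which are affinely independent. Strict convexity then makes the gradient map $\theta\mapsto(-\EEbl[X_1],-\EEbl[X_2],-\EEbl[K])$ injective, which is exactly the asserted uniqueness.

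To obtain \eqref{eq:conv_parameters} for any solution, I would differentiate the leading term $L(\beta,\lambda)=\frac{\zeta(3)-\Li_3(1-\lambda)}{\zeta(2)\beta_1\beta_2}$ from Lemma~\ref{lem:derivatives_Z}: the quantities $-\partial_{\beta_1}L$, $-\partial_{\beta_2}L$ and $\lambda\partial_\lambda L$ are precisely the right-hand sides of \eqref{eq:conv_parameters}, while the same lemma bounds the remaining contributions to $\EEbl[X_i]$ and to $\EEbl[K]$ by $O(\lambda|\beta|^{-(2+\epsilon)})$ and $O(\lambda|\beta|^{-(1+\epsilon)})$ respectively. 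Setting $A(\lambda)=\frac{\zeta(3)-\Li_3(1-\lambda)}{\zeta(2)}$, the ratio of the first two equations gives $\beta_1/\beta_2\sim n_2/n_1$, which stays bounded because $n_1\asymp n_2$ (so Lemma~\ref{lem:derivatives_Z} does apply), and their product gives $|\beta|\asymp A(\lambda)^{1/3}|n|^{-1/3}$. Feeding these sizes back in shows the neglected terms are of relative order $\lambda|\beta|^{1-\epsilon}/A(\lambda)\to0$ throughout the regime $|k|=O(|n|^{2/3})$, so \eqref{eq:conv_parameters} is justified; the third relation then reads $k\,(n_1n_2)^{-1/3}\sim\cc(\lambda)$.

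For existence I would first solve the leading system explicitly. Since $\cc$ is continuous with $\cc(0^+)=0$ and $\cc(+\infty)=3\pi^{-2/3}$ (Remark~\ref{rem:longest}), and since any admissible target satisfies $0<k\,(n_1n_2)^{-1/3}<3\pi^{-2/3}(1+o(1))$ because $k\le M_n\sim3\pi^{-2/3}|n|^{2/3}$, the intermediate value theorem produces $\lambda^0$ with $\cc(\lambda^0)=k\,(n_1n_2)^{-1/3}$, after which the explicit formulas give $\beta_1^0,\beta_2^0$. I would then correct this approximate solution to an exact one: the true mean map $G=(\EEbl[X_1],\EEbl[X_2],\EEbl[K])$ differs from its leading part by the small errors above, and its differential is the covariance matrix, which Lemma~\ref{lem:derivatives_Z} (two derivatives) controls; after rescaling the three variables to comparable sizes this differential becomes a bounded, boundedly invertible perturbation of an explicit matrix, so a quantitative inverse-function/Newton argument places $(n_1,n_2,k)$ in the image of $G$ near $(\beta_1^0,\beta_2^0,\lambda^0)$. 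Together with the uniqueness above this is then the unique solution, and it inherits the asymptotics. Finally, when $k=o(|n|^{2/3})$ one has $\cc(\lambda)\to0$, hence $\lambda\to0$; using $A(\lambda)\sim\lambda$ and $-\lambda\partial_\lambda\Li_3(1-\lambda)=\tfrac{\lambda}{1-\lambda}\Li_2(1-\lambda)\sim\zeta(2)\lambda$, the leading system collapses to $\beta_1\sim k/n_1$, $\beta_2\sim k/n_2$, $\lambda\sim k^3/(n_1n_2)$.

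The main obstacle is the existence step, namely upgrading the explicit approximate solution to a genuine one uniformly over the whole regime—in particular as $\lambda\to0$, where $A(\lambda)\to0$—which forces one to check that both the relative error estimates and the conditioning of the covariance matrix survive in the limit. Because the statistics $X_1,X_2,K$ live on genuinely different scales, the Jacobian is ill-conditioned and must be rescaled before any fixed-point argument can be run, and making this rescaling uniform is the delicate point; by contrast, the uniqueness is soft, resting only on strict convexity.
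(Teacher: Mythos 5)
Your overall architecture is sound, and your uniqueness argument coincides with the paper's: the paper also passes to $\gamma=-\log\lambda$, observes that $\log\Zbl$ is the log-Laplace transform of $(X_1,X_2,K)$, so that the Hessian of $f(\beta,\gamma)=\beta_1n_1+\beta_2n_2+\gamma k+\log Z(\beta,e^{-\gamma})$ is the (positive definite) covariance matrix, and deduces that the calibration equations have at most one solution; your affine-independence certificate for positive definiteness is a nice explicit version of what the paper leaves implicit. Where you diverge is existence and localization, and there the paper's route is markedly simpler than yours. Instead of building an approximate solution and correcting it by a Newton/inverse-function iteration --- which you rightly flag as the delicate point because of the anisotropic, degenerating Jacobian --- the paper notes that $f$ is coercive: by the very definition \eqref{eq:Zbl} of $\Zbl$, $f\to+\infty$ both near the boundary of $(0,+\infty)^2\times\RR$ and as $|\beta_1|+|\beta_2|+|\gamma|\to\infty$, so the strictly convex $f$ attains a minimum, which is the desired unique critical point; existence is free. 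The quantitative step you were worried about is then handled variationally: the paper replaces $f$ by its explicit leading part $g$ (error $O(e^{-\gamma}|\beta|^{-3/2})$ by Lemma~\ref{lem:derivatives_Z}), computes the critical point $(\tilde\beta,\tilde\gamma)$ of $g$, and chooses an anisotropic box $C$ around it on whose boundary $g$ exceeds $g(\tilde\beta,\tilde\gamma)$ by more than the perturbation, so that convexity of $f$ traps the true minimizer inside $C$ and yields \eqref{eq:conv_parameters}. This is the convex-optimization counterpart of your rescaled Newton step and avoids inverting the ill-conditioned Jacobian altogether. One point to shore up in your direct derivation of the asymptotics: to invoke the uniform bounds of Lemma~\ref{lem:derivatives_Z} at the solution you must already know that $\beta_1\asymp\beta_2$ and that $\lambda$ stays bounded there, which you currently deduce from those very bounds; the paper sidesteps this circularity by constructing the minimizer inside the good region from the start. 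Your computations in the degenerate regime $k=o(|n|^{2/3})$ (namely $\zeta(3)-\Li_3(1-\lambda)\sim\zeta(2)\lambda$ and $-\lambda\partial_\lambda\Li_3(1-\lambda)\sim\zeta(2)\lambda$, whence $\beta_i\sim k/n_i$ and $\lambda\sim k^3/(n_1n_2)$) agree with the paper.
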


\begin{proof}
    With the change of variable $\lambda = e^{-\gamma}$, the existence and uniqueness of $(\beta,\lambda)$ are equivalent to the fact that the function
\[
f \colon (\beta_1,\beta_2,\gamma) \mapsto \beta_1 n_1 + \beta_2 n_2 + \gamma k + \log Z(\beta, e^{-\gamma})
\]
has a unique critical point in the open domain $D = (0,+\infty)^2 \times \RR$.
First observe that $f$ is smooth and strictly convex since its Hessian matrix is actually the covariance matrix of the random vector $(X_1,X_2,K)$. In addition, from the very definition~\eqref{eq:Zbl} of $\Zbl$, we can see that $f$ converges to $+\infty$ in the neighborhood of any point of the boundary of $D$ as well as when $|\beta_1| + |\beta_2| + |\gamma|$ tends to $+\infty$. The function being continuous in $D$, this implies the existence of a minimum, which by convexity is the unique critical point $(\beta^*,\gamma^*)$ of $f$.

From now on, we will be concerned and check along the proof that we stay in the regime $\beta_1,\beta_2\to 0$, $\beta_1 \asymp \beta_2$, and $\gamma$ bounded from below. From Lemma~\ref{lem:derivatives_Z}, we can approximate $f$ by the simpler function
\[
    g \colon (\beta_1,\beta_2,\gamma) \mapsto \beta_1 n_1 + \beta_2 n_2 + \gamma k + \dfrac{\zeta(3)-\Li_3(1-e^{-\gamma})}{\beta_1\beta_2}
\]
with $|f(\beta,\gamma) - g(\beta,\gamma)| \leq \dfrac{Ce^{-\gamma}}{|\beta|^{3/2}}$ for some constant $C > 0$. The unique critical point $(\tilde{\beta},\tilde{\gamma})$ of $g$ satisfies
\[
 n_1 = \frac{\zeta(3) - \Li_3(1-e^{-\tilde{\gamma}})}{\zeta(2)(\tilde{\beta_1})^2\tilde{\beta_2}},
 \quad n_2 = \frac{\zeta(3)-\Li_3(1-e^{-\tilde{\gamma}})}{\zeta(2)\tilde{\beta_1}(\tilde{\beta_2})^2}, \quad 
   k = -\frac{e^{-\tilde{\gamma}}\partial_\lambda \Li_3(1-e^{-\tilde{\gamma}})}{\zeta(2)\tilde{\beta_1}\tilde{\beta_2}}.
\]
The goal now is to prove that $(\beta^*,\gamma^*)$ is close to $(\tilde{\beta},\tilde{\gamma})$. To this aim, we find a convex neighborhood $C$ of $(\tilde{\beta},\tilde{\gamma})$ such that $g|_{\partial C} \geq g(\tilde{\beta},\tilde{\gamma}) + \frac{Ce^{-\tilde{\gamma}}}{\tilde{\beta_1}\tilde{\beta_2}}$ .
In the neighborhood of $(\tilde{\beta},\tilde{\gamma})$ the expression of the Hessian matrix of $g$ yields $g(\tilde{\beta}_1 + t_1, \tilde{\beta}_2 + t_2, \tilde{\gamma} + u) \geq g(\tilde{\beta}_1 , \tilde{\beta}_2 , \tilde{\gamma} ) + \frac{\tilde{C}e^{-\tilde{\gamma}} }{(\tilde{\beta}_1\tilde{\beta}_2)^2}(\|t\|^2 + \tilde{\beta}_1\tilde{\beta}_2 |u|^2)$. Therefore we need only take
\[
    C = [\tilde{\beta}_1 - C_1\tilde{\beta}_1^{5/4}, \tilde{\beta}_1 + C_1\tilde{\beta}_1^{5/4}] \times  
    [\tilde{\beta}_2 - C_2\tilde{\beta}_2^{5/4}, \tilde{\beta}_2 + C_2\tilde{\beta}_2^{5/4}] \times [\tilde{\gamma} - C_3 |\beta|^{1/4}, \tilde{\gamma} + C_3 |\beta|^{1/4}].
\]
Therefore, $f|_{\partial C} > f(\tilde{\beta},\tilde{\gamma})$. By convexity of $f$ and $C$ this implies $(\beta^*,\gamma^*) \in C$. Hence
\[
    \beta_1^* \sim \tilde{\beta}_1, \quad
    \beta_2^* \sim \tilde{\beta}_2, \quad
    e^{-\gamma^*} \sim e^{-\tilde{\gamma}},
\]
concluding the proof.
\end{proof}

\subsection{A local limit theorem}

In this section, we show that the random vector $(X_1,X_2,K)$ satisfies a
local limit theorem when the parameters are calibrated as above. Let $\Gammabl$ be the covariance matrix under the measure $\PPbl$ of the random vector $(X_1,X_2, K)$.
\begin{theorem}[Local limit theorem]
    \label{thm:local_limit}
    Let us assume that $n_1,n_2,k$ tend to infinity such that $n_1 \asymp n_2 \asymp |n|$, $\log |n| = o(k)$, and $k = O(|n|^{2/3})$. For the choice of parameters made in Lemma~\ref{lem:parameters},
    \begin{equation}
        \label{eq:local_limit}
        \PPbl[X = n, K = k] \sim \frac{1}{(2\pi)^{3/2}}\frac{1}{\sqrt{\det \Gammabl}}.
    \end{equation}
Moreover,
\begin{equation}
    \det \Gammabl \asymp \frac{|n|^4}{k}
\end{equation}
If $k = o(|n|^{2/3})$,
    \begin{equation}
        \PPbl[X = n, K = k] \sim \frac{1}{(2\pi)^{3/2}}\frac{\sqrt{k}}{n_1n_2}
    \end{equation}
\end{theorem}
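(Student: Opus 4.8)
The plan is to establish the local limit theorem by Fourier analysis on the three-dimensional torus, combined with a determinant computation fed by Lemma~\ref{lem:derivatives_Z}. Since $(X_1,X_2,K)$ is $\ZZ^3$-valued and, by the product structure of $\PPbl$, the family $(\omega(x))_{x\in\XX}$ is independent, the characteristic function factorizes as
\[
    \phi(\theta) := \EEbl\bigl[e^{i(\theta_1 X_1 + \theta_2 X_2 + \theta_3 K)}\bigr] = \prod_{x\in\XX}\phi_x(\theta),
    \qquad
    \phi_x(\theta) = \frac{1}{Z_x(\beta,\lambda)}\left(1 + \lambda e^{i\theta_3}\frac{e^{-\beta\cdot x + i(\theta_1 x_1 + \theta_2 x_2)}}{1 - e^{-\beta\cdot x + i(\theta_1 x_1 + \theta_2 x_2)}}\right).
\]
Fourier inversion on $[-\pi,\pi]^3$ then gives
\[
    \PPbl[X = n, K = k] = \frac{1}{(2\pi)^3}\int_{[-\pi,\pi]^3}\phi(\theta)\,e^{-i(\theta_1 n_1 + \theta_2 n_2 + \theta_3 k)}\,d\theta,
\]
and the whole argument is the classical split of this integral into a central region around $\theta=0$, where a Gaussian approximation applies, and a tail region, which must be shown to be negligible.

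First I would settle the determinant asymptotics, which also fixes the relevant scale. As recorded just before Theorem~\ref{thm:detailed_comb}, the covariance matrix $\Gammabl$ is exactly the Hessian of $(\beta_1,\beta_2,\gamma)\mapsto \log Z(\beta,e^{-\gamma})$ with $\lambda = e^{-\gamma}$. Writing $A(\gamma) = (\zeta(3)-\Li_3(1-e^{-\gamma}))/\zeta(2)$, Lemma~\ref{lem:derivatives_Z} says that $\log Z$ and each of its derivatives agree with those of $A(\gamma)/(\beta_1\beta_2)$ up to errors of smaller order in the calibrated regime. Computing the six second derivatives of $A(\gamma)/(\beta_1\beta_2)$ and expanding the $3\times3$ determinant yields
\[
    \det\Gammabl \sim \frac{A\,[3AA'' - 2(A')^2]}{\beta_1^5\beta_2^5}
\]
(the bracket being positive since $\Gammabl$ is a covariance matrix). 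Inserting the calibration relations~\eqref{eq:conv_parameters}, so that $\beta_1\asymp\beta_2$, $\beta_1^3\asymp A/|n|$ and $k\asymp -A'/(\beta_1\beta_2)$, gives $\det\Gammabl\asymp |n|^4/k$ both when $\ell$ is fixed, where $A,A',A''\asymp 1$, and when $\lambda\to 0$, where $A\sim\lambda$, $A'\sim-\lambda$, $A''\sim\lambda$ so that $A[3AA''-2(A')^2]\asymp\lambda^3$. This proves the second displayed relation and, in the sparse regime, reduces the last claim to the first since then $\sqrt{\det\Gammabl}\sim n_1 n_2/\sqrt{k}$.

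For the central region I would change variables to $\theta = \Gammabl^{-1/2}u$ and use the second-order Taylor expansion of $\log\phi$ at the origin. Because the parameters are calibrated so that $\EEbl[X]=n$ and $\EEbl[K]=k$, the gradient of $\log\phi$ at $0$ equals $i(n_1,n_2,k)$, so the linear phase is cancelled by $e^{-i(\theta_1 n_1+\theta_2 n_2+\theta_3 k)}$ and the quadratic term contributes $-\tfrac12\|u\|^2$. The third- and higher-order derivatives of $\log\phi$, controlled by the derivative bounds of Lemma~\ref{lem:derivatives_Z}, are of smaller order on the scale $|\theta|\le\delta$, so the rescaled integrand converges to $e^{-\|u\|^2/2}$ and the central contribution is asymptotic to $(2\pi)^{-3/2}(\det\Gammabl)^{-1/2}$, the claimed main term.

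The hard part will be the tail estimate: one must show that the integral of $|\phi(\theta)|$ over $\{\theta\in[-\pi,\pi]^3 : |\theta|\ge\delta\}$ is $o((\det\Gammabl)^{-1/2}) = o(\sqrt{k}/(n_1 n_2))$. Since each factor satisfies $|\phi_x(\theta)|\le 1$, one has $\log|\phi(\theta)| = \sum_x \log|\phi_x(\theta)|$ with each summand negative and comparable to $-\PPbl[\omega(x)>0]\,(1-\cos(\theta_1 x_1 + \theta_2 x_2 + \theta_3))$, whose weights sum to $\EEbl[K]=k$. The delicate point is arithmetic: $x$ ranges over \emph{primitive} vectors, of which $\asymp\beta^{-2}\asymp|n|^{2/3}$ lie in the range $\beta\cdot x = O(1)$ carrying most of the mass, and one must argue that for $\theta$ in the tail a positive proportion of these directions keep $\theta_1 x_1 + \theta_2 x_2$ bounded away from $2\pi\ZZ$, so that $\sum_x\log|\phi_x(\theta)|$ is $\le -c\,k$ uniformly. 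This is exactly where the hypothesis $\log|n| = o(k)$ enters: it guarantees that the decay $e^{-ck}$ beats the polynomial factor $\sqrt{\det\Gammabl}$, making the tail negligible. Tracking this bound uniformly as $\lambda\to 0$, where the per-site activation probability degenerates, is the principal technical difficulty, and is handled by the same coprimality and equidistribution input used by \Sinai{} and by Bogachev and Zarbaliev for the unconstrained model.
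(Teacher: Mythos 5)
Your overall strategy --- Fourier inversion on $[-\pi,\pi]^3$, a Gaussian central zone, a negligible tail, and a determinant computation from the Hessian of the approximate log-partition function $(\zeta(3)-\Li_3(1-\lambda))/(\zeta(2)\beta_1\beta_2)$ --- is in substance the paper's proof: the authors package the central-zone analysis into a general local limit lemma (Lemma~\ref{lem:framework}, quoted from a prior paper) whose hypotheses are a Lyapunov third-moment ratio (their Lemma~\ref{lem:lyapunov}, playing the role of your third-derivative control) and a Cram\'er-type bound on the characteristic function outside an ellipsoid. Your determinant asymptotics agree with their Lemma~\ref{lem:covariance}, and your reduction of the third display to the first via $\sqrt{\det\Gammabl}\sim n_1n_2/\sqrt{k}$ is the same as theirs.

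The genuine gap is the tail estimate, which you correctly identify as the hard part but then defer to ``the same coprimality and equidistribution input used by \Sinai and by Bogachev and Zarbaliev.'' That reference does not cover this theorem: those works treat the two-dimensional vector $(X_1,X_2)$ at $\lambda=1$, whereas here there is a third torus variable $\theta_3$ dual to the vertex count $K$, and the per-site law is a \emph{biased} geometric. Your asserted pointwise bound $\log|\phi_x(\theta)|\lesssim -\PPbl[\omega(x)>0]\,(1-\cos(\theta_1x_1+\theta_2x_2+\theta_3))$ is not justified and is exactly where the difficulty sits: the exact formula for $|\phi_x|^2$ (computed in the paper's Lemma~\ref{lem:cramer}) contains the combination $2\rho^x|\sin(\tfrac{t\cdot x}{2})|^2+|\sin(\tfrac{t\cdot x+u}{2})|^2-\rho^x|\sin(\tfrac{u}{2})|^2$, with a \emph{negative} coefficient on the $|\sin(u/2)|^2$ term, so one cannot read off decay in $u$ alone or in $t\cdot x$ alone without further work. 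The paper resolves this with a law-of-sines reparametrization and the positivity of two explicit quadratic forms, yielding the two separate bounds~\eqref{eq:sinu} and~\eqref{eq:sintx}, and only then runs the arithmetic density argument (counting primitive $x$ with $\tfrac14<e^{-\beta\cdot x}\le\tfrac13$ and $|\sin(\tfrac{t\cdot x}{2})|\ge\epsilon$, using that non-primitive points have density $1-1/\zeta(2)<1/2$) to get $\log|\phi(\theta)|\le -ck$ uniformly, with the factor $\lambda$ tracked so the bound survives the degeneration $\lambda\to0$. Without an argument of this kind your tail region is not controlled, so the proof as written is incomplete at its most delicate point.
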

 
This result is actually an application of a more general lemma proven by the first author in \cite[Proposition 7.1]{bureaux_partitions_2014}. In order to state the lemma, we introduce some notations. Let $\sigmabl^2$ be the smallest eigenvalue of $\Gammabl$.  
Introducing $X_{1,x} = \omega(x) \cdot x_1$, $X_{2,x} = \omega(x) \cdot x_2$ and $K_x = 1_{\{\omega(x) > 0\}}$ as well as $\overline{X_{1,x}} , \overline{X_{2,x}},\overline{K_x}$ their centered counterparts, let $\Lbl$ be the Lyapunov ratio 
\[
\Lbl := \sup_{(t_1,t_2,u)\in \RR^3}  \sum_{x \in \L} \frac{\EEbl \left\lvert t_1 \overline{X_{1,x}} + t_2 \overline{X_{2,x}} +  u\overline{K_x}\right\rvert^3}{\Gammabl(t_1,t_2,u)^{3/2}}.
\]where $\Gammabl(\cdot)$ stands for the quadratic form canonically associated to $\Gammabl$.
Let $ \phibl(t,u) = \EEbl (e^{i(t_1 X_1 + t_2 X_2 + uK})$ for all $(t_1,t_2,u) \in \RR^3$. Finally, we consider the ellipsoid $\Ecal_{\beta,\lambda}$ defined by
\[
\mathcal{E}_{\beta,\lambda} := \left\{(t_1,t_2,u) \in \RR^3 \mid \Gammabl(t_1,t_2,u) \leq (4\Lbl)^{-2}\right\}.
\]

The following lemma is a reformulation of Proposition~{7.1} in \cite{bureaux_partitions_2014}. It gives three conditions on the product distributions $\PPbl$ that entail a local limit theorem with given speed of convergence.

 \begin{lemma}
     \label{lem:framework}
     With the notations introduced above, suppose that there exists a family of number $(a_{\beta,\lambda})$ such that
     \begin{gather}
     \frac{1}{\sigmabl\sqrt{\det \Gammabl}} = \bigO(a_{\beta,\lambda}), \\
              \frac{\Lbl}{\sqrt{\det \Gammabl}} = \bigO(a_{\beta,\lambda}), \\
              \label{eq:cramer}
               \sup_{(t,u) \in [-\pi,\pi]^3\setminus \Ecal_{\beta,\lambda}} \left|\phibl(t,u)\right| = \bigO(a_{\beta,\lambda}).
     \end{gather}
     Then, a local limit theorem holds uniformly for $\PPbl$ with rate $a_{\beta,\lambda}$:
     \[
         \sup_{(n,k)\in \Z^3}\; \left|\PPbl[X = n, K = k] - \frac{\exp\left[-\frac{1}{2}\Gammabl^{-1}\bigl((n,k) - \EEbl (X,K)\bigr)\right]}{(2\pi)^{3/2}\sqrt{\det \Gammabl}}\right| = \bigO(a_{\beta,\lambda}).
     \]
 \end{lemma}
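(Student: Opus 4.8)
The plan is to exploit the product structure of $\PPbl$ established in Lemma~\ref{lem:integral_partition_function}: the random vector $(X_1,X_2,K)=\sum_{x\in\L}(X_{1,x},X_{2,x},K_x)$ is a sum of independent $\ZZ^3$-valued summands, so \eqref{eq:local_limit} and its companions are instances of a local limit theorem for triangular arrays, which is exactly the content of \cite[Proposition~7.1]{bureaux_partitions_2014}. Concretely, writing $m=(n_1,n_2,k)$, $\theta=(t_1,t_2,u)$ and $\mu=\EEbl(X_1,X_2,K)$, Fourier inversion on the fundamental domain of $\ZZ^3$ gives
\[
\PPbl[X = n, K = k] = \frac{1}{(2\pi)^3}\int_{[-\pi,\pi]^3}\phibl(\theta)\,e^{-i\theta\cdot m}\,d\theta,
\]
while the Gaussian density on the right-hand side of the conclusion is the inverse Fourier transform over all of $\RR^3$ of $\theta\mapsto e^{i\theta\cdot\mu}e^{-\frac12\Gammabl(\theta)}$, with $\Gammabl(\theta)=\theta^{\mathsf T}\Gammabl\theta$. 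Subtracting, the whole problem reduces to bounding, uniformly in $m$, the quantity $\int_{[-\pi,\pi]^3}\phibl(\theta)\,e^{-i\theta\cdot m}\,d\theta-\int_{\RR^3}e^{i\theta\cdot\mu}e^{-\frac12\Gammabl(\theta)}e^{-i\theta\cdot m}\,d\theta$, and I would split the domain into the ellipsoid $\Ecal_{\beta,\lambda}$, the rest of the cube, and the exterior of the cube.

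On $\Ecal_{\beta,\lambda}$ I would Taylor-expand $\log\phibl$ to third order: setting $V_x=(X_{1,x},X_{2,x},K_x)$ and writing $\overline{V_x}$ for its centering, so that $\theta\cdot\overline{V_x}=t_1\overline{X_{1,x}}+t_2\overline{X_{2,x}}+u\overline{K_x}$, independence gives $\log\phibl(\theta)=i\theta\cdot\mu-\frac12\Gammabl(\theta)+R(\theta)$ with $|R(\theta)|\le C\sum_{x\in\L}\EEbl|\theta\cdot\overline{V_x}|^3\le C\,\Lbl\,\Gammabl(\theta)^{3/2}$ by the very definition of $\Lbl$. The defining radius $(4\Lbl)^{-1}$ of $\Ecal_{\beta,\lambda}$ is tuned so that $\Gammabl(\theta)^{1/2}\le(4\Lbl)^{-1}$ forces $|R(\theta)|\le\frac14\Gammabl(\theta)$, whence $|\phibl|\le e^{-\frac14\Gammabl}$ and $|\phibl-e^{-\frac12\Gammabl}|\le C\Lbl\,\Gammabl^{3/2}e^{-\frac14\Gammabl}$ on $\Ecal_{\beta,\lambda}$. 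After the linear change of variables $w=\Gammabl^{1/2}\theta$, which turns $\Gammabl(\theta)$ into $|w|^2$ and contributes the Jacobian $(\det\Gammabl)^{-1/2}$, this integrates to a bound of order $\Lbl/\sqrt{\det\Gammabl}$, i.e. $\bigO(a_{\beta,\lambda})$ by the second hypothesis. The same change of variables gives $\int_{\RR^3\setminus\Ecal_{\beta,\lambda}}e^{-\frac12\Gammabl}\,d\theta=(\det\Gammabl)^{-1/2}\int_{|w|>(4\Lbl)^{-1}}e^{-|w|^2/2}\,dw$, which is again $\bigO(\Lbl/\sqrt{\det\Gammabl})=\bigO(a_{\beta,\lambda})$ because $R\int_{|w|>R}e^{-|w|^2/2}\,dw$ stays bounded as $R=(4\Lbl)^{-1}$ ranges over $(0,\infty)$.

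It remains to treat the part of the cube outside the ellipsoid and the Gaussian mass outside the cube. Over $[-\pi,\pi]^3\setminus\Ecal_{\beta,\lambda}$ I would bound $|\phibl|$ by its supremum and integrate over a set of volume at most $(2\pi)^3$, which is $\bigO(a_{\beta,\lambda})$ precisely by the Cramér-type hypothesis~\eqref{eq:cramer}. For the tail beyond the cube, the inclusion $\{\theta\notin[-\pi,\pi]^3\}\subseteq\{|\theta|>\pi\}$ together with $\Gammabl(\theta)\ge\sigmabl^2|\theta|^2$ yields, after the same rescaling, a bound of order $(\det\Gammabl)^{-1/2}\int_{|w|>\pi\sigmabl}e^{-|w|^2/2}\,dw=\bigO\!\bigl(1/(\sigmabl\sqrt{\det\Gammabl})\bigr)$, hence $\bigO(a_{\beta,\lambda})$ by the first hypothesis (the Gaussian factor even making it super-polynomially small once $\sigmabl$ is large). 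Collecting the four contributions gives the announced uniform rate. The main obstacle is genuinely the Cramér-type condition~\eqref{eq:cramer}: unlike the first two hypotheses, which are soft consequences of the moment estimates of Lemma~\ref{lem:derivatives_Z}, controlling $|\phibl|$ away from the origin requires quantitative anti-concentration for the non-degenerate three-dimensional lattice sum $\sum_{x\in\L}V_x$, and this is exactly the estimate that must be verified for the calibrated parameters before the present lemma can be applied. Since all of this is assembled once and for all in \cite[Proposition~7.1]{bureaux_partitions_2014}, the proof ultimately amounts to matching our notation $(\sigmabl,\Lbl,\Gammabl,\phibl,\Ecal_{\beta,\lambda})$ to the hypotheses of that proposition.
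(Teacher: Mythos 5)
Your proposal is correct and matches the paper's treatment: the paper gives no independent proof of this lemma, presenting it purely as a reformulation of Proposition~7.1 of \cite{bureaux_partitions_2014}, which is exactly the reduction you arrive at, and the Fourier-inversion argument you reconstruct (splitting the inversion integral over the ellipsoid $\Ecal_{\beta,\lambda}$, the rest of the cube, and the Gaussian tails, with the three hypotheses controlling the three pieces) is the standard proof underlying that cited proposition. No discrepancy to report.
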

 
 When governed by the Gibbs measure \(\PPbl\), the covariance matrix $\Gammabl$ of the random vector \((X_1,X_2,K)\) is simply given by the Hessian matrix of the log partition function $\log \Zbl$. Let \(u(\lambda) := (\zeta(3) - \Li_3(1-\lambda))/\zeta(2)\) for \(\lambda > 0\). Applications of Lemma~\ref{lem:derivatives_Z} for all $(p,q_1,q_2) \in \ZZ_+^3$ such that $p+q_1+q_2 = 2$ imply that this covariance matrix is asymptotically equivalent to
\[
    \begin{bmatrix}
        \beta_1\beta_2 & 0 & 0\\
        0 & \beta_1^3 \beta_2 & 0\\
        0 & 0 & \beta_1\beta_2^3
    \end{bmatrix}^{-\frac{1}{2}}
    \begin{bmatrix}
        \lambda^2 u''(\lambda) + \lambda u'(\lambda)  &
        \lambda u'(\lambda) &
        \lambda u'(\lambda) \\

        \lambda u'(\lambda) &
        2u(\lambda) &
        u(\lambda) \\

        \lambda u'(\lambda) &
        u(\lambda) &
        2u(\lambda) &
    \end{bmatrix}
    \begin{bmatrix}
        \beta_1\beta_2 & 0 & 0\\
        0 & \beta_1^3 \beta_2 & 0\\
        0 & 0 & \beta_1\beta_2^3
    \end{bmatrix}^{-\frac{1}{2}}.
\]
A straightforward calculation shows that this matrix is positive definite for all $\lambda > 0$.

\begin{lemma}
\label{lem:covariance}
The random vector \((X_1,X_2, K)\) has a covariance matrix \(\Gammabl\) satisfying
\[
\Gammabl(t,u) \asymp \frac{(n_1)^{5/3}}{(\lambda n_2)^{1/3}} |t_1|^2 + \frac{(n_2)^{5/3}}{(\lambda n_1)^{1/3}} |t_2|^2 + (\lambda n_1n_2)^{1/3} |u|^2, \qquad |n| \to +\infty.
\]
\end{lemma}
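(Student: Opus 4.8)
The plan is to read the quadratic form directly off the factorisation $\Gammabl\sim D^{-1/2}M(\lambda)D^{-1/2}$ displayed just before the statement, where $D=\mathrm{diag}(\beta_1\beta_2,\,\beta_1^3\beta_2,\,\beta_1\beta_2^3)$ and $M(\lambda)$ is the explicit positive definite $3\times3$ matrix, with rows ordered as $(K,X_1,X_2)$. For a reordered vector $v=(u,t_1,t_2)$ ($u$ being the variable conjugate to $K$) I set $w:=D^{-1/2}v$. Then $\Gammabl(t,u)=v^\top\Gammabl\,v$, and since $v^\top D^{-1/2}M(\lambda)D^{-1/2}v=w^\top M(\lambda)\,w$, the form is comparable to $w^\top M(\lambda)w$ once the entrywise equivalence is promoted to the level of forms (justified in the last paragraph). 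The diagonal part $\sum_i M(\lambda)_{ii}w_i^2=\sum_i\Gammabl_{ii}v_i^2$ equals
\[
\frac{\lambda^2u''(\lambda)+\lambda u'(\lambda)}{\beta_1\beta_2}\,u^2+\frac{2u(\lambda)}{\beta_1^3\beta_2}\,t_1^2+\frac{2u(\lambda)}{\beta_1\beta_2^3}\,t_2^2,
\]
the sum of the three variances of $K,X_1,X_2$. The whole difficulty is to show that the off-diagonal entries of $M(\lambda)$ do not alter the order of magnitude of the form, uniformly in a regime where $\lambda$ may tend to $0$.

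For the diagonal coefficients I would feed in Lemma~\ref{lem:parameters}. From \eqref{eq:conv_parameters} one gets $\beta_1\beta_2\sim u(\lambda)^{2/3}(n_1n_2)^{-1/3}$ and $\beta_1/\beta_2\sim n_2/n_1$, whence $\beta_1\sim u(\lambda)^{1/3}n_2^{1/3}n_1^{-2/3}$ and $\beta_2\sim u(\lambda)^{1/3}n_1^{1/3}n_2^{-2/3}$. Substituting, the three diagonal coefficients become, up to constants, $n_1^{5/3}(u(\lambda)\,n_2)^{-1/3}$, $n_2^{5/3}(u(\lambda)\,n_1)^{-1/3}$ and $(u(\lambda)\,n_1n_2)^{1/3}$ (for the last one one uses $\lambda^2u''(\lambda)+\lambda u'(\lambda)=(\lambda\partial_\lambda)^2u(\lambda)$). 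These match the statement once one knows $u(\lambda)\asymp\lambda$ uniformly on the relevant range: indeed $u(\lambda)/\lambda\to1$ as $\lambda\to0$ (since $u'(0^+)=1$), and $u$ is continuous and positive, so $u(\lambda)\asymp\lambda$ on any interval $(0,\Lambda]$; the same argument applied to $(\lambda\partial_\lambda)^2u(\lambda)$ gives $(\lambda\partial_\lambda)^2u(\lambda)\asymp\lambda$.

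The main obstacle is the uniform control of the cross terms as $\lambda\to0$, where every entry of $M(\lambda)$ degenerates to $0$ at rate $\lambda$. I would resolve this by rescaling: $M(\lambda)/\lambda$ extends continuously to $\lambda=0$, with limit
\[
M_0=\begin{pmatrix}1&1&1\\1&2&1\\1&1&2\end{pmatrix},
\]
obtained from $u(\lambda)/\lambda\to1$, $u'(\lambda)\to1$ and $\lambda u''(\lambda)\to0$. Since $\det M_0=1>0$, the map $\lambda\mapsto M(\lambda)/\lambda$ is positive definite on the whole compact interval $[0,\Lambda]$ (recall the calibration keeps $\gamma=-\log\lambda$ bounded below, i.e. $\lambda$ bounded above), so its extreme eigenvalues are bounded away from $0$ and $+\infty$ uniformly. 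Hence $w^\top M(\lambda)w\asymp\lambda\|w\|^2$, and since the diagonal entries $(M(\lambda)/\lambda)_{ii}$ are likewise bounded away from $0$ and $\infty$, also $\sum_i M(\lambda)_{ii}w_i^2\asymp\lambda\|w\|^2$; thus the full form and its diagonal part are comparable. Finally, the entrywise equivalence $\Gammabl\sim D^{-1/2}M(\lambda)D^{-1/2}$ upgrades to the form level via $\sum_{i,j}|M(\lambda)_{ij}|\,|w_iw_j|\le\sum_i\bigl(\sum_j|M(\lambda)_{ij}|\bigr)w_i^2\le C\lambda\|w\|^2$ together with this uniform positive definiteness, so the $o(1)$ relative errors on the entries induce an $o(1)$ relative error on the form. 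Combining the three steps yields $\Gammabl(t,u)\asymp$ the announced diagonal expression.
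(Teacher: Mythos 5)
Your proof is correct and follows essentially the same route as the paper's: read the quadratic form off the displayed factorization $D^{-1/2}M(\lambda)D^{-1/2}$, show the eigenvalues of $M(\lambda)$ are uniformly of order $\lambda$, and substitute the calibrated values of $\beta_1,\beta_2$ from Lemma~\ref{lem:parameters}. The only (cosmetic) difference is in how the eigenvalue bound is justified: the paper uses trace $=O(\lambda)$ together with $\det M(\lambda)\sim\lambda^3$, while you use continuity of $M(\lambda)/\lambda$ up to $\lambda=0$ with positive definite limit $M_0$; both are valid, and your version is somewhat more explicit about uniformity as $\lambda\to0$ and about passing from entrywise to form-level equivalence.
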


\begin{proof}
All the coefficients of the previous matrix $u(\lambda), \lambda u'(\lambda), \lambda^2 u''(\lambda)$ are of order $\lambda$ in the neighborhood of $0$, and the determinant is equivalent to $\lambda^3$. Therefore, the eigenvalues are also of order $\lambda$. The result follows from the fact that the values of $\beta_1$ and $\beta_2$ are given by~\eqref{eq:conv_parameters} and that $\zeta(3)-\Li_3(1-\lambda) \asymp \zeta(2) \lambda$.
\end{proof}

\begin{lemma}
    \label{lem:lyapunov}
    The Lyapunov coefficient satisfies \(\Lbl = O(\lambda^{-1/6} \lvert n \rvert^{-1/3})\).
\end{lemma}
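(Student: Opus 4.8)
The plan is to bound the supremum defining $\Lbl$ by reducing it to three scalar sums, which I then control using the explicit law of $\omega(x)$ together with the covariance estimate of Lemma~\ref{lem:covariance}. Writing $\overline{\omega(x)} := \omega(x) - \EEbl[\omega(x)]$ and using $X_{1,x} = \omega(x)x_1$, $X_{2,x} = \omega(x)x_2$, the linear form inside the third moment collapses to
\[
t_1\overline{X_{1,x}} + t_2\overline{X_{2,x}} + u\overline{K_x} = (t_1 x_1 + t_2 x_2)\,\overline{\omega(x)} + u\,\overline{K_x}.
\]
With the elementary bounds $|a+b|^3 \leq 4(|a|^3+|b|^3)$ and $|t_1 x_1 + t_2 x_2|^3 \leq 4(|t_1|^3 x_1^3 + |t_2|^3 x_2^3)$, the numerator of $\Lbl$ is at most $16|t_1|^3 S_1 + 16|t_2|^3 S_2 + 4|u|^3 S_K$, where
\[
S_1 := \sum_{x\in\L} x_1^3\,\EEbl\bigl|\overline{\omega(x)}\bigr|^3, \qquad S_2 := \sum_{x\in\L} x_2^3\,\EEbl\bigl|\overline{\omega(x)}\bigr|^3, \qquad S_K := \sum_{x\in\L}\EEbl\bigl|\overline{K_x}\bigr|^3.
\]
By Lemma~\ref{lem:covariance} there is $c>0$ with $\Gammabl(t,u) \geq c\,(A_1|t_1|^2 + A_2|t_2|^2 + A_u|u|^2)$, where $A_1 = n_1^{5/3}(\lambda n_2)^{-1/3}$, $A_2 = n_2^{5/3}(\lambda n_1)^{-1/3}$, $A_u = (\lambda n_1 n_2)^{1/3}$. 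I then invoke the elementary extremal identity: for nonnegative $(c_i)$ and positive $(d_i)$,
\[
\sum_i c_i|t_i|^3 \leq \Bigl(\max_i c_i d_i^{-3/2}\Bigr)\Bigl(\sum_i d_i|t_i|^2\Bigr)^{3/2},
\]
which follows from $c_i \leq (\max_j c_j d_j^{-3/2})\,d_i^{3/2}$ together with $\sum_i a_i^{3/2} \leq (\sum_i a_i)^{3/2}$ for $a_i \geq 0$. Combining, $\Lbl = \bigO\bigl(\max(S_1 A_1^{-3/2},\, S_2 A_2^{-3/2},\, S_K A_u^{-3/2})\bigr)$, so it remains to bound the three sums.

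Next I would estimate the sums from the explicit biased-geometric law. From $|\omega(x)-\mu|^3 \leq 4(\omega(x)^3+\mu^3)$ with $\mu=\EEbl[\omega(x)]\geq 0$ and Jensen's inequality $\mu^3\leq\EEbl[\omega(x)^3]$, one gets $\EEbl|\overline{\omega(x)}|^3 \leq 8\,\EEbl[\omega(x)^3]$. The law $\PPbl[\omega(x)=j] = Z_x^{-1}e^{-j\beta\cdot x}\lambda^{\indicator{j>0}}$ from the proof of Lemma~\ref{lem:integral_partition_function}, with $Z_x\geq 1$ and $\sum_{j\geq1}j^3 q^j = q(1+4q+q^2)(1-q)^{-4}$, yields $\EEbl[\omega(x)^3] \leq 6\lambda\,e^{-\beta\cdot x}(1-e^{-\beta\cdot x})^{-4}$. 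Dropping the coprimality constraint (and the negligible axis terms $(1,0),(0,1)$) and comparing the resulting lattice sum with its integral after the substitution $y_i = \beta_i x_i$ gives
\[
S_1 = \bigO\Bigl(\lambda\sum_{x}\frac{x_1^3 e^{-\beta\cdot x}}{(1-e^{-\beta\cdot x})^4}\Bigr) = \bigO\Bigl(\frac{\lambda}{\beta_1^4\beta_2}\Bigr),
\]
the implied constant being the finite integral $\iint_{(0,\infty)^2} y_1^3 e^{-(y_1+y_2)}(1-e^{-(y_1+y_2)})^{-4}\,dy_1\,dy_2 = \tfrac14\int_0^\infty s^4 e^{-s}(1-e^{-s})^{-4}\,ds$; symmetrically $S_2 = \bigO(\lambda\beta_1^{-1}\beta_2^{-4})$. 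For $S_K$, since $K_x\in\{0,1\}$ we have $|\overline{K_x}|^3 \leq \overline{K_x}^2$, hence $\EEbl|\overline{K_x}|^3 \leq \Cov_{\beta,\lambda}(K_x,K_x) \leq \EEbl[K_x]$ and thus $S_K \leq \EEbl[K] = k$.

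Finally I would insert the calibrated parameters. By Lemma~\ref{lem:parameters} with $n_1\asymp n_2\asymp|n|$ we have $\beta_1\asymp\beta_2\asymp\lambda^{1/3}|n|^{-1/3}$ and $k\asymp\lambda^{1/3}|n|^{2/3}$, so $\beta_1^4\beta_2\asymp\lambda^{5/3}|n|^{-5/3}$ yields $S_1, S_2 = \bigO(\lambda^{-2/3}|n|^{5/3})$ while $S_K = \bigO(\lambda^{1/3}|n|^{2/3})$. Since $A_1^{3/2}\asymp A_2^{3/2}\asymp|n|^2\lambda^{-1/2}$ and $A_u^{3/2}\asymp|n|\lambda^{1/2}$, the three ratios collapse to the same order,
\[
\frac{S_1}{A_1^{3/2}},\ \frac{S_2}{A_2^{3/2}},\ \frac{S_K}{A_u^{3/2}} = \bigO\bigl(\lambda^{-1/6}|n|^{-1/3}\bigr),
\]
which is exactly the claimed bound. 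The one genuinely delicate point is the uniformity of the integral comparison for $S_1$ and $S_2$ across the whole calibration regime, in particular as $\lambda\to 0$; this sum is one differentiation order higher than the estimates already carried out for the derivatives of $\log\Zbl$ in Lemmas~\ref{lem:meromorphic_continuation} and~\ref{lem:derivatives_Z}, so the same Euler--Maclaurin bookkeeping applies and no new difficulty arises.
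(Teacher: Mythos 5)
Your proof is correct and follows essentially the same route as the paper: split the third absolute moment into the three scalar sums $S_1, S_2, S_K$, bound each using the explicit biased-geometric law of $\omega(x)$ (giving $O(\lambda\beta_1^{-4}\beta_2^{-1})$, $O(\lambda\beta_1^{-1}\beta_2^{-4})$ and $O(\lambda\beta_1^{-1}\beta_2^{-1})$ respectively), divide by the $3/2$-powers of the diagonal coefficients of $\Gamma_{\beta,\lambda}$ from Lemma~\ref{lem:covariance}, and insert the calibration of Lemma~\ref{lem:parameters}. The only cosmetic deviations are your explicit extremal inequality for the supremum over $(t,u)$ (the paper bounds term by term, to the same effect) and your variance bound $\EEbl|\overline{K_x}|^3\leq \EEbl[K_x]$ in place of the paper's Bernoulli estimate $\EE|B(p)-p|^3\leq 8p$.
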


\begin{proof}
Using Lemma~\ref{lem:covariance}, there exists a constant $C > 0$ such that
\[
\Lbl \leq C \sum_{x \in \XX} \left[\frac{\EEbl\lvert \overline{X_{1,x}}  \rvert^3 }{\lambda^{-1/2}}\frac{n_2^{1/2}}{n_1^{5/2}} + \frac{\EEbl\lvert \overline{X_{2,x}}  \rvert^3 }{\lambda^{-1/2}}\frac{n_1^{1/2}}{n_2^{5/2}} + \frac{\EEbl\lvert \overline{K_x}  \rvert^3 }{\lambda^{1/2}(n_1n_2)^{1/2}}\right].
\]
Therefore, we need only prove that
\[
 \sum_{x \in \L} \EEbl \left\lvert \overline{K_x}  \right\rvert^3 = O(\lvert n\rvert^{2/3}), \qquad
 \sum_{x \in \L} \EEbl \left\lvert \overline{X_{i,x}} \right\rvert^3 = O(\lvert n\rvert^{5/3}).
\]
Notice that for a Bernoulli random variable $B(p)$ of parameter $p$, one has $\EE[\lvert B(p) - p\rvert^3] \leq 4 (\EE [B(p)^3] + p^3) \leq 8 p$. This implies
\[
	 \sum_{x \in \L} \EEbl \left\lvert \overline{K_x}  \right\rvert^3 \leq \sum_{x \in \L} \frac{8 \lambda e^{-\beta\cdot x}}{1 - (1-\lambda) e^{-\beta\cdot x}} \leq \sum_{x \in \L} \frac{8 \lambda e^{-\beta\cdot x}}{1 - e^{-\beta\cdot x}} = O(\frac{\lambda}{\beta_1\beta_2}).
\]
Similarly, we obtain
\[	\sum_{x \in \L} \EEbl \left\lvert \overline{X_{1,x}} \right\rvert^3 = O(\frac{\lambda}{\beta_1^4\beta_2}),
	\quad
	\sum_{x \in \L} \EEbl \left\lvert \overline{X_{2,x}} \right\rvert^3 = O(\frac{\lambda}{\beta_1\beta_2^4}).
\]
\end{proof}

\begin{lemma}
    \label{lem:cramer}
    Condition~\eqref{eq:cramer} of Lemma~\ref{lem:framework} is satisfied.
    More precisely,
    \[
        \limsup_{|n| \to +\infty} \quad\sup_{(t,u) \in [-\pi,\pi]^3\setminus\mathcal{E}_{\beta,\lambda}}\quad\frac{1}{\lambda^{1/3} |n|^{2/3}} \log |\phi_n(t,u)| < 0.
    \]
\end{lemma}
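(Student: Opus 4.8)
The plan is to exploit the product structure of $\PPbl$, pass through the symmetrization inequality to a sum over sites, and then reduce the whole statement to an anti-concentration (equidistribution) estimate for the linear forms $x\mapsto t_1x_1+t_2x_2$ indexed by primitive vectors, splitting $[-\pi,\pi]^3\setminus\Ecal_{\beta,\lambda}$ according to the size of $(t,u)$.

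\textbf{Step 1 (reduction to a sum over sites).} Under $\PPbl$ the coordinates $(\omega(x))_{x\in\XX}$ are independent, so $\phibl(t,u)=\prod_{x\in\XX}\psi_x(t,u)$, where $\psi_x$ is the characteristic function of $\theta_x:=(t_1x_1+t_2x_2)\,\omega(x)+u\,\indicator{\omega(x)>0}$. Writing $s_x=t_1x_1+t_2x_2$ and using $\log|\psi_x|\leq\tfrac12\log|\psi_x|^2\leq-\tfrac12(1-|\psi_x|^2)$ together with the symmetrization identity $1-|\psi_x|^2=\EEbl\EEbl'[1-\cos(\theta_x-\theta_x')]$ (with $\theta_x'$ an independent copy), I keep only the pair $\{\omega(x),\omega'(x)\}=\{1,0\}$:
\[
1-|\psi_x|^2\geq 2\,\PPbl[\omega(x)=0]\,\PPbl[\omega(x)=1]\,(1-\cos(s_x+u)).
\]
On the annulus $S=\{x\in\XX:\ c_1\leq\beta\cdot x\leq c_2\}$ both probabilities are of order $\lambda$ uniformly in the parameter range, and $|S|\asymp(\beta_1\beta_2)^{-1}$, so after summing $\log|\phibl(t,u)|\leq -c\,\lambda\sum_{x\in S}(1-\cos(s_x+u))$. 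Since $\lambda|S|\asymp\lambda(\beta_1\beta_2)^{-1}\asymp\lambda^{1/3}|n|^{2/3}$ under the calibration of Lemma~\ref{lem:parameters}, it suffices to prove that $\sum_{x\in S}(1-\cos(s_x+u))\geq c'|S|$, equivalently that the exponential sum obeys $\bigl|\sum_{x\in S}e^{is_x}\bigr|\leq(1-c'')|S|$, uniformly over $(t,u)\in[-\pi,\pi]^3\setminus\Ecal_{\beta,\lambda}$.

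\textbf{Step 2 (the small regime).} Fix constants $A_0,A_0'$ so that $|s_x+u|\leq\pi$ for every $x\in S$ whenever $(t,u)$ lies in the box $B_0=\{|t_1|\leq A_0\beta_1,\ |t_2|\leq A_0\beta_2,\ |u|\leq A_0'\}$. There the elementary inequality $1-\cos\theta\geq\tfrac{2}{\pi^2}\theta^2$ gives $1-|\psi_x|^2\geq c\lambda(s_x+u)^2$, hence $\log|\phibl|\leq-c\lambda\sum_{x\in S}(s_x+u)^2$. The right-hand side is a positive-definite quadratic form in $(t,u)$ whose matrix $\lambda\sum_{x\in S}(x_1,x_2,1)^{\otimes2}$ has entries of the same order, and comparable eigenvalue scaling, as the covariance matrix $\Gammabl$ (Lemma~\ref{lem:covariance}); thus $\lambda\sum_{x\in S}(s_x+u)^2\geq c'\Gammabl(t,u)$. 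For $(t,u)\in B_0\setminus\Ecal_{\beta,\lambda}$ we obtain $\log|\phibl|\leq-c\,\Gammabl(t,u)\leq-c\,(4\Lbl)^{-2}$, and Lemma~\ref{lem:lyapunov} yields $(4\Lbl)^{-2}\geq c_0\,\lambda^{1/3}|n|^{2/3}$, which is exactly the bound sought.

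\textbf{Step 3 (the large regime, main obstacle).} It remains to treat $(t,u)\notin B_0$. If $|t_1|\leq A_0\beta_1$ and $|t_2|\leq A_0\beta_2$ but $|u|>A_0'$, then $|s_x|$ is uniformly small on $S$ while $|u|$ is bounded below, so $s_x+u$ stays in a fixed arc away from $2\pi\ZZ$ and $1-\cos(s_x+u)\geq c$ for every $x\in S$, settling this case immediately. The genuine difficulty is $|t_1|>A_0\beta_1$ (the case $|t_2|>A_0\beta_2$ being symmetric) with $u$ arbitrary, where one must establish $\bigl|\sum_{x\in S}e^{is_x}\bigr|\leq(1-c)|S|$ for the \emph{primitive} vectors of the annulus. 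I would remove the coprimality constraint by \Moebius inversion, $\indicator{\gcd(x_1,x_2)=1}=\sum_{d\mid\gcd(x_1,x_2)}\mu(d)$, reducing to exponential sums $\sum_y e^{i(dt)\cdot y}$ over all lattice points $y$ in the rescaled annuli $\{c_1\leq d\,\beta\cdot y\leq c_2\}$; each can be estimated row by row, fixing $y_2$ and summing the geometric progression in $y_1$ by partial summation, using that $|t_1|>A_0\beta_1$ and that each row is an interval of $\asymp(d\beta_1)^{-1}$ consecutive integers to produce cancellation, then summing against $|\mu(d)|$ (the annuli being empty once $d$ is of order $(\beta_1\beta_2)^{-1/2}$). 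The hardest point is precisely this uniform cancellation: the bound must be robust to the coprimality restriction and uniform over the whole range $A_0\beta_1<|t_1|\leq\pi$, including the ``resonant'' values of $t_1$ at which the row sums lose their cancellation and must be compensated either by the second coordinate or by the total oscillation of $s_x$ across the two-dimensional annulus (which is of order $A_0$, hence at least of order $\pi$). Controlling this interplay between the two directions and the \Moebius tail is the crux; everything else reduces to the quadratic comparison of Step 2 and the calibration already furnished by Lemmas~\ref{lem:parameters},~\ref{lem:covariance} and~\ref{lem:lyapunov}.
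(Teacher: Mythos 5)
Your reduction in Steps 1--2 is sound and close in spirit to the paper's own: the symmetrization bound $1-|\psi_x|^2\gtrsim\lambda\,(1-\cos(t\cdot x+u))$ on an annulus $\{c_1\leq\beta\cdot x\leq c_2\}$ of cardinality $\asymp(\beta_1\beta_2)^{-1}\asymp\lambda^{-2/3}|n|^{2/3}$ is exactly the mechanism the paper exploits (it derives the analogous inequalities \eqref{eq:sinu}--\eqref{eq:sintx} from an explicit computation of $|\phi_n^x|^2$), and your treatment of the regimes ``$t$ small, $u$ bounded below'' and ``$(t,u)$ small but outside $\Ecal_{\beta,\lambda}$'' is correct. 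But the lemma is not proven: the case $|t_1|>A_0\beta_1$, which you yourself flag as the crux, is where all the content lies, and the route you sketch for it does not work as stated. After \Moebius inversion the $d$-th term ranges over $\asymp(d^2\beta_1\beta_2)^{-1}$ lattice points, so bounding the tail $d\geq2$ trivially already costs $(\zeta(2)-1)N\approx 0.645\,N$, where $N\asymp(\beta_1\beta_2)^{-1}$ is the full lattice count of the annulus, whereas the main term you are trying to beat is only $|S|=N/\zeta(2)\approx 0.608\,N$. You therefore cannot afford to lose cancellation even in the $d=2,3$ terms; yet at resonant frequencies ($dt_1$ near $2\pi\ZZ$, e.g.\ $t_1=\pi$ and $d=2$) the row-by-row geometric series gives no saving in the $y_1$ direction, and $t_2$ may simultaneously vanish. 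Making the estimate uniform over the whole range $A_0\beta_1<|t_1|\leq\pi$ amounts to an equidistribution statement for $t\cdot x\bmod 2\pi$ over \emph{primitive} vectors which you have not established.

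The paper avoids \Moebius inversion entirely with a short density argument, which is the real idea of the proof: it bounds the number of \emph{all} lattice points $x$ in the annulus with $|\sin(\tfrac{t\cdot x}{2})|<\epsilon$ by slicing into rows $x_2=\mathrm{const}$. On each row the bad set is a periodic union of strips of period $\tau_1=2\pi/t_1\geq 2$ and width $O(\epsilon\tau_1)$, hence it contains at most a fraction $\tfrac12+C\epsilon$ of the integers of the row (the $\tfrac12$ coming from at most one extra integer per period of length at least $2$). Since the primitive vectors have density $1/\zeta(2)=6/\pi^2>\tfrac12$ in the annulus, a fixed positive proportion of them must satisfy $|\sin(\tfrac{t\cdot x}{2})|\geq\epsilon$ once $\epsilon$ is chosen smaller than $\tfrac{1}{2C}(\tfrac{1}{\zeta(2)}-\tfrac12)$, and the conclusion follows from \eqref{eq:sintx}. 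If you want to salvage your own route you must supply precisely this kind of uniform anti-concentration input for the coprime lattice; without it the argument has a genuine gap.
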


\begin{proof}
    From Lemmas~\ref{lem:covariance} and~\ref{lem:lyapunov}, there exists a constant \(c > 0\) depending on \(\lambda\) such that for all \(n=(n_1,n_2)\) with \(|n|\) large enough,
    \[
        [-\pi,\pi]^3 \setminus \mathcal{E}_{\lambda,n} \subset
        \{(t,u) \in \RR^3 \mid c < |u| \leq \pi \text{ or } c \lambda^{1/3} |n|^{-1/3} < |t|\}.
    \]
    The strategy of the proof is to deal separately with the cases $|u| > c$ and $|t| > c \lambda^{1/3} |n|^{-1/3}$, which requires to find first adequate bounds for $|\phi_n(t,u)|$ in both cases. For all \((t_1,t_2, u) \in \RR^3\) and \(x\in \L\), let us write \(t = (t_1,t_2)\) and \(\rho^x = e^{-\beta \cdot x}\). The ``partial'' characteristic function \(\phi_n^x(t,u) = \EE[e^{i(t\cdot X_x + uK_x )}]\) is given by
    \[
        \phi_n^x(t,u) = \left(1 + \lambda e^{iu}\dfrac{e^{it\cdot x}\rho^x}{1-  e^{it\cdot x}\rho^x}\right)\left(1 + \lambda\dfrac{\rho^x}{1-  \rho^x}\right)^{-1},
    \]
    hence a straightforward calculation yields
    \begin{align*}
        \left|\phi_n^x(t,u)\right|^2 
                                     & = 1 - \frac{\frac{4\lambda\rho^x}{(1-(1-\lambda)\rho^x)^2}\left[\frac{\rho^x(2+(\lambda-2)\rho^x)}{(1-\rho^x)^2}|\sin(\frac{t\cdot x}{2})|^2 + |\sin(\frac{t\cdot x + u}{2})|^2 - \rho^x |\sin(\frac{u}{2})|^2 \right]}{1+\frac{4\rho^x}{(1-\rho^x)^2}|\sin(\frac{t\cdot x}{2})|^2}\\
                                     & \leq \exp\left\{ - \frac{\frac{4\lambda\rho^x}{(1-(1-\lambda)\rho^x)^2}\bigl(2\rho^x|\sin(\frac{t\cdot x}{2})|^2 + |\sin(\frac{t\cdot x + u}{2})|^2 - \rho^x |\sin(\frac{u}{2})|^2 \bigr)}{1+\frac{4\rho^x}{(1-\rho^x)^2}|\sin(\frac{t\cdot x}{2})|^2}\right\}
    \end{align*}
    Using the law of sines in a triangle with angles $\frac{t\cdot x}{2}$, $\frac{u}{2}$ and $\frac{2\pi - t\cdot x+u}{2}$, we see that the numerator inside the bracket is proportional (with positive constant) to
    \[
        2\rho^x \|a\|^2 + \|{b}\|^2 - \rho^x \|{a} + {b}\|^2
    \]
    where $a$ and $b$ are two-dimensional vectors. Since the real quadratic form
    $(a_i,b_i) \mapsto 2\rho\, a_i^2 + b_i^2 - \frac{2\rho}{1+2\rho} \,(a_i + b_i)^2$
    is positive for all $\rho \in (0,1)$ and for $i \in \{1,2\}$, we deduce that 
    \begin{equation}
        \label{eq:sinu}
        |\phi_n^x(t,u)| \leq
        \exp\left\{ - \frac{\frac{2\lambda\rho^x}{(1-(1-\lambda)\rho^x)^2}}{1+\frac{4\rho^x}{(1-\rho^x)^2}}\left(\frac{2\rho^x}{1+2\rho^x}-\rho^x\right)\left|\sin(\tfrac{u}{2})\right|^2 \right\}
    \end{equation}
    for all $x$ such that $\rho_x \leq \frac{1}{2}$.
    In the same way, the positivity of the quadratic form $(a_i,b_i)  \mapsto \frac{\rho}{1-\rho}\, a_i^2 + b_i^2 - \rho\, (a_i+b_i)^2$ yields
    \begin{equation}
        \label{eq:sintx}
        |\phi_n^x(t,u)| \leq
        \exp\left\{ - \frac{\frac{2\lambda\rho^x}{(1-(1-\lambda)\rho^x)^2}}{1+\frac{4\rho^x}{(1-\rho^x)^2}}\left(2\rho^x-\frac{\rho^x}{1-\rho^x}\right)\left|\sin(\tfrac{t\cdot x}{2})\right|^2 \right\}
    \end{equation}
    for all $x$ such that $\rho_x \leq \frac{1}{2}$.

    Let us begin with the region \(\{(t,u) \in \RR^3 \mid c < |u| \leq \pi\}\). In this case $|\sin(\tfrac{u}{2})|$ is uniformly bounded from below by $|\sin(\tfrac{c}{2})|$. Hence using \eqref{eq:sinu} for the $x \in \XX$ such that $\frac{1}{4} < \rho^x \leq \frac{1}{3}$ and the bound $|\phi_n^x(t,u)| \leq 1$ for all other $x$, we obtain
    \[
        \log |\phi_n(t,u)| \leq - \frac{1}{160}\frac{\lambda|\sin(\tfrac{c}{2})|^2}{(1+\frac{1}{3}|\lambda-1|)^2}\left|\left\{x \in \XX \mid \frac{1}{4} < \rho^x \leq \frac{1}{3}\right\}\right|.
    \]
    To conclude, let us recall that the number of integral points with coprime coordinates such that $\frac{1}{4} < e^{-\beta \cdot x} \leq \frac{1}{3}$ is asymptotically equal to $\frac{1}{\zeta(2)}\frac{\log(4/3)}{2\beta_1\beta_2} \asymp \lambda^{-2/3} |n|^{2/3}$.
    
    We now turn to the region \(\{(t,u) \in [-\pi,\pi]^3 \mid c\lambda^{1/3} |n|^{-1/3} < |t|\}\). Without loss of generality, we can assume \(|t_1| > c' \lambda^{1/3} |n|^{-1/3}\) for some universal constant \(c' \in (0; c)\). Using the inequality \eqref{eq:sintx} for the elements $x \in \XX$ such that $\frac{1}{4} < \rho^x \leq \frac{1}{3}$ and the bound $|\phi_n^x(t,u)| \leq 1$ for all other $x$, we obtain for all $\epsilon \in (0,1)$,
    \[
        \log |\phi_n(t,u)| \leq - \frac{\epsilon^2}{64} \frac{\lambda}{(1 + \frac{1}{3}|\lambda-1|)^2} \left|\left\{ x \in \XX \mid \frac{1}{4} < \rho^x \leq \frac{1}{3} \text{ and } |\sin(\tfrac{t \cdot x}{2})| \geq \epsilon\right\}\right|.
    \]
  Since the number of $x \in \XX$ such that $\frac{1}{4} < e^{-\beta \cdot x} \leq \frac{1}{3}$ is asymptotically equal to $\frac{\log(4/3)}{2\zeta(2)\beta_1\beta_2}$, it is enough to prove that we can find $\epsilon$ such that the set of vectors $x \in \ZZ_+^2$ with $|\sin(\tfrac{t\cdot x}{2})| < \epsilon$ has density strictly smaller than $\frac{1}{\zeta(2)}$ in $\{x \in \ZZ_+^2 \mid \frac{1}{4} < \rho^x \leq \frac{1}{3}\}$.
  We split up this region according to horizontal lines, that is to say with $\frac{t_2x_2}{2}$ constant. The set $\{x_1 \in \RR \mid |\sin(\frac{t_2x_2}{2}+\tfrac{t_1x_1}{2})| < \epsilon\}$ is a periodic union of strips of period $\tau_1 = \frac{2\pi}{t_1} \geq 2$ and width bounded by $4\epsilon\tau_1$. Hence the number of $x_1 \in \ZZ_+$ satisfying this condition and lying in any bounded finite interval $I$ is at most $\left(\frac{|I|}{\tau_1} + 2\right)(4\epsilon \tau_1+1)$. Summing up the contributions of the horizontal lines, this shows the existence of some positive constant $C > 0$ independent of $\epsilon$ such that for all $\epsilon \in (0,1)$, the number of $x \in \ZZ_+^2$ satisfying both $\frac{1}{4} < e^{-\beta \cdot x} \leq \frac{1}{3}$ and $|\sin(\tfrac{t\cdot x}{2})| < \epsilon$ is bounded by
  \[
      (\tfrac{1}{2} + C\epsilon) \frac{\log(4/3)}{2\beta_1\beta_2} + C |n|^{1/3}\log|n|.
  \]
  To achieve our goal, we can therefore choose $\epsilon = \frac{1}{2C}(\frac{1}{\zeta(2)}-\frac{1}{2}) > 0$.
\end{proof}

\begin{proof}[Proof of Theorem~\ref{thm:local_limit}]
    We simply check that the hypotheses of Lemma~\ref{lem:framework} are satisfied. From Lemma~\ref{lem:covariance}, we have $\sigmabl^2 \asymp k$ and $\det(\Gammabl) \asymp k^{-1}|n|^4$, hence
   \[
       \frac{1}{\sigmabl \sqrt{\det \Gammabl}} \asymp \frac{1}{|n|^2}.
   \]
   Using in addition Lemma~\ref{lem:lyapunov}, we have also
   \[
       \frac{\Lbl}{\sqrt{\det \Gammabl}} = O\left(\frac{1}{|n|^2}\right).
   \]
   Finally, Lemma~\ref{lem:cramer} shows the existence of some constant $c > 0$ such that for all $(n,k)$ large enough,
   \[
        \sup_{(t,u) \in [-\pi,\pi]^3
            \setminus\mathcal{E}_{\beta,\lambda}}
        \; |\phi_n(t,u)| \leq e^{-c k}
    \]
    Since we have made the assumption $\log |n| = o(k)$, the quantity $e^{-ck}$ is also bounded from above by $|n|^{-2}$. Therefore, all hypotheses of Lemma~\ref{lem:framework} are satisfied. As a consequence, $\PPbl$ satisfies a local limit theorem with speed rate $a_{\beta,\lambda} \asymp |n|^{-2}$. 

\end{proof}

\section{Limit shape}
\label{sec:limit_shape}

We start by proving the existence of a limit shape in the modified \Sinai{} model, which is the aim of the next two lemmas. The natural normalization for the convex polygonal line is to divide each coordinate by the corresponding expectations for the final point.

The first lemma shows that the arc of parabola is the limiting curve of the expectation of the random convex polygonal line $m_i^\theta (\beta,\lambda) = \EEbl [X_i^\theta]$ for $i\in \{1,2\}, \theta \in [0,\infty]$ under the $\PPbl$ distribution.

\begin{lemma}
    \label{lem:expectation}
    Suppose that $\beta_1$ and $\beta_2$ tend to $0$ such that $\beta_1 \asymp \beta_2$ and $\lambda$ is bounded from above. Then
    \[
        \lim_{|\beta| \to 0} \sup_{\theta \in [0,\infty]} \left |
        \left[\frac{m_1^\theta(\beta,\lambda)}{m_1^\infty(\beta,\lambda)},
        \frac{m_2^\theta(\beta,\lambda)}{m_2^\infty(\beta,\lambda)}\right] - 
        \left[
            \frac{\theta(\theta + 2\frac{\beta_1}{\beta_2})}{(\theta + \frac{\beta_1}{\beta_2})^2} 
            ,
            \frac{\theta^2}{(\theta + \frac{\beta_1}{\beta_2})^2}
        \right] \right| = 0.
    \]
\end{lemma}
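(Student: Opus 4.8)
The plan is to evaluate the expectation $m_i^\theta(\beta,\lambda)$ directly as a weighted sum over primitive vectors and to compare it with a continuous integral. Under $\PPbl$ the variables $\omega(x)$ are independent with the biased geometric law recalled in the proof of Lemma~\ref{lem:integral_partition_function}, so a one-line computation gives
\[
\EEbl[\omega(x)] = \frac{\lambda\, e^{-\beta\cdot x}}{(1-e^{-\beta\cdot x})(1-(1-\lambda)e^{-\beta\cdot x})} = \lambda\,\tilde g_\lambda(\beta\cdot x), \qquad \tilde g_\lambda(w) := \frac{e^{-w}}{(1-e^{-w})(1-(1-\lambda)e^{-w})},
\]
whence $m_i^\theta(\beta,\lambda)=\lambda\sum_{x\in\XX,\,x_2\le\theta x_1}x_i\,\tilde g_\lambda(\beta\cdot x)$. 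The elementary observation driving the whole argument is that $\tilde g_\lambda(w)\le e^{-w}(1-e^{-w})^{-2}$ uniformly for $\lambda\in(0,\Lambda]$, and that this bound is integrable against $w^2\,dw$ on $(0,\infty)$ (its integral is $2\zeta(2)$); this is what will let me pass to the limit $|\beta|\to0$ uniformly in $\lambda$, which is needed since the lemma only assumes $\lambda$ bounded from above. Note also that the denominator $m_i^\infty(\beta,\lambda)=\EEbl[X_i]$ has already been estimated through the partition function in Lemmas~\ref{lem:derivatives_Z} and~\ref{lem:parameters}.

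First I would establish convergence at a fixed slope. Writing $\mathbf 1_{\gcd(x_1,x_2)=1}=\sum_{d\mid x_1,\,d\mid x_2}\mu(d)$ and setting $x=dy$ removes coprimality at the cost of a sum over $d$:
\[
\sum_{\substack{x\in\XX\\ x_2\le\theta x_1}} x_i\,\tilde g_\lambda(\beta\cdot x) = \sum_{d\ge1}\mu(d)\sum_{\substack{y_1,y_2\ge1\\ y_2\le\theta y_1}} d\,y_i\,\tilde g_\lambda(d\,\beta\cdot y).
\]
After the change of variables $u=\beta_1 x_1,\ v=\beta_2 x_2$ and the rescaling $(u,v)\mapsto(du,dv)$, the inner sum is a Riemann sum asymptotic (for $i=1$) to $d^{-2}(\beta_1^2\beta_2)^{-1}\int\!\!\int_{v\le cu}u\,\tilde g_\lambda(u+v)\,du\,dv$, where $c:=\theta\beta_2/\beta_1$; for $i=2$ the same computation with $u$ replaced by $v$ and prefactor $(\beta_1\beta_2^2)^{-1}$ applies. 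Summation over $d$ yields the factor $\sum_{d\ge1}\mu(d)d^{-2}=\zeta(2)^{-1}$. Slicing the planar integral along $w=u+v$ (on which $\tilde g_\lambda$ is constant) gives
\[
\int\!\!\int_{v\le cu} u\,\tilde g_\lambda(u+v)\,du\,dv = \tfrac12\Big(1-\tfrac1{(1+c)^2}\Big)\int_0^\infty w^2\tilde g_\lambda(w)\,dw, \qquad \int\!\!\int_{v\le cu} v\,\tilde g_\lambda(u+v)\,du\,dv = \frac{c^2}{2(1+c)^2}\int_0^\infty w^2\tilde g_\lambda(w)\,dw.
\]
The radial integral $\int_0^\infty w^2\tilde g_\lambda$ is common to numerator and denominator (the latter being the case $c=\infty$), so it cancels in the ratio $m_i^\theta/m_i^\infty$, leaving $\frac{c(c+2)}{(1+c)^2}$ for $i=1$ and $\frac{c^2}{(1+c)^2}$ for $i=2$; substituting $c=\theta\beta_2/\beta_1$ reproduces exactly the two target expressions of the lemma.

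It then remains to upgrade this pointwise (in $\theta$) convergence to the uniform statement, and here I would invoke Pólya's monotone convergence theorem. For each $(\beta,\lambda)$ the function $\theta\mapsto m_i^\theta/m_i^\infty$ is nondecreasing, being a ratio of partial sums of nonnegative terms; it tends to $0$ at $\theta=0$ and equals $1$ at $\theta=\infty$, while the limiting profile $c\mapsto c(c+2)/(1+c)^2$ (resp. $c^2/(1+c)^2$) is continuous and nondecreasing from $0$ to $1$ on the compactified half-line $[0,\infty]$. Since pointwise convergence of monotone functions to a continuous limit on a compact interval forces uniform convergence, the supremum over $\theta$ vanishes in the limit, which is the assertion.

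The main obstacle is the second step: showing that the Riemann-sum and Möbius-tail errors are negligible relative to the main term of order $(\beta_1^2\beta_2)^{-1}$, \emph{uniformly} in $\lambda\in(0,\Lambda]$. The difficulty concentrates near the origin, where $\tilde g_\lambda(w)$ degenerates like $w^{-2}$ as $\lambda\to0$, and along the boundary line $v=cu$ where the indicator is discontinuous; controlling both requires the uniform domination noted above together with a careful count of the lattice points lying within scale $|\beta|$ of the singularity and of the cone boundary.
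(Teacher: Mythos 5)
Your proposal is correct in substance and arrives at the right limiting profile, but it follows a genuinely different route from the paper. The paper reuses the Mellin-transform machinery of Lemmas~\ref{lem:integral_partition_function} and~\ref{lem:derivatives_Z}: it introduces the partial partition function $\log Z^\theta$ and the restricted Barnes zeta function $\zeta_2^{\theta,*}$, continues the latter via Euler--Maclaurin, reads off the residue $\frac{\theta}{\beta_1(\beta_1+\theta\beta_2)}$ at $s=2$, differentiates in $\beta_1,\beta_2$, and shifts the contour to $\Re(s)=\tfrac32$ to get the main terms with an explicit $O(|\beta|^{-5/2})$ error against a main term of order $|\beta|^{-3}$. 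You instead compute $\EEbl[\omega(x)]$ in closed form, strip the coprimality condition by \Moebius inversion (producing the same $\zeta(2)^{-1}$ that the paper gets from the factor $1/\zeta(s)$), and replace the cone sum by a planar integral that you evaluate by slicing along $u+v=\mathrm{const}$; the radial integral $\int_0^\infty w^2\tilde g_\lambda$, which equals $2(\zeta(3)-\Li_3(1-\lambda))/\lambda$ and thus matches the paper's constant, cancels in the ratio. Your identities check out (e.g.\ $\frac{c(c+2)}{(1+c)^2}$ with $c=\theta\beta_2/\beta_1$ is exactly the paper's $\frac{\theta(\theta+2\beta_1/\beta_2)}{(\theta+\beta_1/\beta_2)^2}$), and your P\'olya/Dini monotonicity argument for uniformity in $\theta$ is precisely what the paper invokes in its opening sentence, with the same implicit reduction to a convergent subsequence of $\beta_1/\beta_2$. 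What the paper's route buys is that the error control comes essentially for free and uniformly in $\lambda$ from bounds already established (the factor $\lambda$ in \eqref{eq:decay_lambda} appears in both the main and remainder terms); what your route buys is elementary transparency and no analytic continuation. The one place where your write-up is only a sketch is exactly the place you flag yourself: the Riemann-sum and \Moebius-tail errors near $w\asymp|\beta|$, along the cone boundary $v=cu$, and for $d\gtrsim|\beta|^{-1}$ must be shown to be $o(\lambda|\beta|^{-3})$ uniformly in $\lambda$ bounded above (the uniform domination $\tilde g_\lambda(w)\le e^{-w}(1-e^{-w})^{-2}$ and the observation that only $O(1)$ lattice points lie at scale $|\beta|$ from the origin, each contributing $O(|\beta|^{-1})$, do make this work, but it needs to be carried out). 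With that step completed, your proof is a valid and self-contained alternative.
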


\begin{proof}
    Since we are dealing with continuous increasing functions, the uniform convergence convergence will follow from the simple convergence.
    We mimic the proof of Lemma~\ref{lem:derivatives_Z}, except that the domain of summation $\XX$ is replaced by the subset of vectors $x$ such that $x_2 \leq \theta x_1$. The expectations are given by the first derivatives of the \emph{partial} logarithmic partition function
    \[
        \log Z^\theta(\beta, \lambda) = \frac{1}{2i\pi}\int_{c-i\infty}^{c+i\infty} (\zeta(s+1) - \Li_{s+1}(1-\lambda)) {\zeta_2^{\theta,*}}(s)\Gamma(s)\,ds
    \]
    where $\zeta_2^{\theta,*}$ is the restricted zeta function defined by analytic continuation of the series
    \begin{align*}
        \zeta_2^{\theta,*}(s) & = \sum_{\substack{x \in \XX \\ x_2 \leq \theta x_1}} (\beta_1 x_1 + \beta_2 x_2)^{-s} \\
                              & = \frac{1}{\beta_1^s} + \frac{1_{\{\theta = \infty\}} }{\beta_2^s} + \frac{1}{\zeta(s)}\sum_{\substack{x_1,x_2 \geq 1\\ x_2 \leq \theta x_1}} (\beta_1 x_1 + \beta_2 x_2)^{-s}.
    \end{align*}

    The continuation of the underlying restricted Barnes zeta function is obtained using the Euler-Maclaurin formula several times:
    \begin{align*}
        \sum_{x_2 = 1}^{\lfloor \theta x_1 \rfloor} (\beta_1 x_1 + \beta_2 x_2)^{-s} & =
        \int_1^{\lfloor \theta x_1 \rfloor} (\beta_1 x_1 + \beta_2 x_2)^{-s}\,dx_2  + \frac{(\beta_1 x_1 + \beta_2)^{-s}}{2} + \frac{(\beta_1 x_1 + \beta_2 \lfloor \theta x_1 \rfloor)^{-s}}{2} \\
        & \qquad -s \beta_2 \int_1^{\lfloor \theta x_1 \rfloor}(\{x_2\} - \frac{1}{2}) (\beta_1 x_1 + \beta_2 x_2)^{-(s+1)}\,dx_2\\
        & = \int_1^{\theta x_1} (\beta_1 x_1 + \beta_2 x_2)^{-s}\,dx_2  + \frac{(\beta_1 x_1 + \beta_2)^{-s}}{2} + \frac{(\beta_1 x_1 + \beta_2 \lfloor \theta x_1 \rfloor)^{-s}}{2} \\
        & \qquad -s \beta_2 \int_1^{\lfloor \theta x_1 \rfloor}(\{x_2\} - \frac{1}{2}) (\beta_1 x_1 + \beta_2 x_2)^{-(s+1)}\,dx_2 \\
        & \qquad- \int_{\lfloor \theta x_1 \rfloor}^{\theta x_1} (\beta_1 x_1 + \beta_2 x_2)^{-s}\,dx_2\\
        & = \frac{(\beta_1 x_1 + \beta_2)^{-s+1}}{\beta_2(s-1)} 
        - \frac{(\beta_1 x_1 + \beta_2 \theta x_1)^{-s+1}}{\beta_2(s-1)}  + R(s,x_1,\beta_1,\beta_2,\theta)
    \end{align*}
    where
    \begin{align*}
        R(s,x_1,\beta_1,\beta_2,\theta) & = \frac{(\beta_1 x_1 + \beta_2)^{-s}}{2} + \frac{(\beta_1 x_1 + \beta_2 \lfloor \theta x_1 \rfloor)^{-s}}{2} \\
        & \qquad -s \beta_2 \int_1^{\lfloor \theta x_1 \rfloor}(\{x_2\} - \frac{1}{2}) (\beta_1 x_1 + \beta_2 x_2)^{-(s+1)}\,dx_2 \\
        & \qquad- \int_{\lfloor \theta x_1 \rfloor}^{\theta x_1} (\beta_1 x_1 + \beta_2 x_2)^{-s}\,dx_2\\
    \end{align*}
    is such that $\sum_{x_1 \geq 1} R(s,x_1,\beta_1,\beta_2,\theta)$ converges absolutely for all $s$ with $\Re(s) > 1$. Therefore the latter series defines a holomorphic function in the half-plane $\Re(s) > 1$. Finally,
    \begin{align*}
        \sum_{\substack{x_1,x_2 \geq 1\\ x_2 \leq \theta x_1}} (\beta_1 x_1 + \beta_2 x_2)^{-s}
            & = \frac{(\beta_1+\beta_2)^{-s + 2}}{\beta_1\beta_2(s-1)(s-2)} - \frac{(\beta_1+\theta\beta_2)^{-s+2}}{(\beta_1+\theta\beta_2)\beta_2(s-1)(s-2)}\\
            & \qquad + \widetilde{R}(s,\beta_1,\beta_2,\theta)
    \end{align*}
    where $\widetilde{R}$ is holomorphic in $s$ for $\Re(s) > 1$. Hence, the residue at $s = 2$ is $\frac{\theta}{\beta_1(\beta_1 + \theta \beta_2)}$. 
Taking the derivatives with respect to $\beta_1$ and $\beta_2$, we obtain,
\begin{align*}
    - \frac{\partial}{\partial \beta_1}
    \sum_{\substack{x_1,x_2 \geq 1\\ x_2 \leq \theta x_1}} (\beta_1 x_1 + \beta_2 x_2)^{-s}
    & = \frac{1}{\beta_1^2\beta_2}\frac{\theta(\theta + 2 \frac{\beta_1}{\beta_2})}{(\theta + \frac{\beta_1}{\beta_2})^2} \frac{1}{s-2} + R_1(s,\beta_1,\beta_2,\theta)
\end{align*}
and similarly
\begin{align*}
    - \frac{\partial}{\partial \beta_2}
    \sum_{\substack{x_1,x_2 \geq 1\\ x_2 \leq \theta x_1}} (\beta_1 x_1 + \beta_2 x_2)^{-s}
    & = \frac{1}{\beta_1 \beta_2^2} \frac{\theta^2}{(\theta + \frac{\beta_1}{\beta_2})^2} \frac{1}{s-2} + R_2(s,\beta_1,\beta_2,\theta)
\end{align*}
where both remainder terms $R_1$ and $R_2$ are holomorphic in $s$ in the half-plane $\sigma := \Re(s) > 1$ and are bounded, up to positive constants, by
\[
    \frac{|s|^2}{\sigma - 1} \min(\beta_1,\beta_2)^{-\sigma-1}.
\]
This decrease makes it possible to apply the residue theorem in order to shift to the left the vertical line of integration from $\sigma = 3$ to $\sigma = \frac{3}{2}$.
When $\beta_1$ and $\beta_2$ tend to $0$ and $\frac{\beta_1}{\beta_2}$ tends to $\ell$, we thus find
\begin{align*}
    \EE_{\beta,\lambda}[X_1^\theta] &= \frac{\zeta(3)-\Li_3(1-\lambda)}{\zeta(2)}\left[ \frac{1}{\beta_1^2\beta_2} \frac{\theta(\theta + 2\frac{\beta_1}{\beta_2})}{(\theta + \frac{\beta_1}{\beta_2})^2} + O\left(\frac{1}{|\beta|^{5/2}}\right)\right],\\
    \EE_{\beta,\lambda}[X_2^\theta] &= \frac{\zeta(3)-\Li_3(1-\lambda)}{\zeta(2)} \left[\frac{1}{\beta_1^2\beta_2} \frac{\theta^2}{(\theta + \frac{\beta_1}{\beta_2})^2} + O\left(\frac{1}{|\beta|^{5/2}}\right)\right].
\end{align*}
We obtain the announced result by normalizing these quantities by their limits when $\theta$ goes to infinity.
\end{proof}

\begin{lemma}[Uniform exponential concentration]
    \label{lem:concentration}
 Suppose that $\beta_1$ and $\beta_2$ tend to $0$ such that $\beta_1 \asymp \beta_2$ and $\lambda$ is bounded from above. For all $\eta \in (0,1)$, we have
    \[
        \PP_{\beta,\lambda}\left[\sup_{1\leq i \leq 2}\sup_{\theta \in [0,\infty]} \frac{|X^\theta_i - m_i^\theta(\beta,\lambda)|}{m_i^\infty(\beta,\lambda)} > \eta\right] \leq
        \exp\left\{-\frac{c(\lambda)\eta^2}{8\beta_1\beta_2}\left(1 + o(1)\right)\right\}.
    \]
\end{lemma}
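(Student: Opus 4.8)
The plan is to exploit the monotonicity of $\theta\mapsto X_i^\theta(\omega)$ in order to replace the supremum over the continuum of angles by a maximum over a finite grid, and then to control each grid term by an exponential Chernoff bound built from the explicit cumulant generating function of $\PPbl$. The first step is to record that, for each fixed $i$ and $\omega$, the map $\theta\mapsto X_i^\theta(\omega)=\sum_{x_2\le\theta x_1}\omega(x)x_i$ is nondecreasing, since increasing $\theta$ only adds nonnegative terms; consequently $\theta\mapsto m_i^\theta(\beta,\lambda)$ is nondecreasing too. Lemma~\ref{lem:expectation} shows moreover that $\theta\mapsto m_i^\theta/m_i^\infty$ converges uniformly, as $|\beta|\to0$, to the continuous increasing profile $\theta\mapsto\theta(\theta+2r)/(\theta+r)^2$ (resp.\ $\theta^2/(\theta+r)^2$) with $r=\beta_1/\beta_2$. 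Since $\beta_1\asymp\beta_2$, these profiles are equicontinuous in $\theta$, so for a given $\eta$ I can fix once and for all a finite grid $0=\theta_0<\theta_1<\dots<\theta_N=\infty$, with $N=O(1/\eta)$ points independent of $\beta$, along which both normalized profiles increase by at most $\eta/4$; for $|\beta|$ small enough the true increments $m_i^{\theta_{j+1}}-m_i^{\theta_j}$ are then at most $\tfrac\eta2\,m_i^\infty$.

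Next I would run the monotonicity sandwich: on each interval $[\theta_j,\theta_{j+1}]$ one has $|X_i^\theta-m_i^\theta|\le\max_{\ell\in\{j,j+1\}}|X_i^{\theta_\ell}-m_i^{\theta_\ell}|+(m_i^{\theta_{j+1}}-m_i^{\theta_j})$, so the event in the statement is contained in $\bigcup_{i,j}\{|X_i^{\theta_j}-m_i^{\theta_j}|>\tfrac\eta2\,m_i^\infty\}$, and a union bound (over the two coordinates, the two tails, and the $O(1/\eta)$ grid points) reduces everything to a single tail estimate at a fixed angle. For that estimate I would use that, because $\PPbl$ carries the factor $e^{-\beta\cdot x\,\omega(x)}$, the moment generating function of $X_1^\infty$ is a ratio of partition functions, $\EEbl[e^{tX_1^\infty}]=Z(\beta_1-t,\beta_2,\lambda)/Z(\beta,\lambda)$, and likewise for $X_2$. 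Writing $t=\alpha\beta_i$ with $0<\alpha<1$ and $u(\lambda)=(\zeta(3)-\Li_3(1-\lambda))/\zeta(2)$, the $p=q_1=q_2=0$ instance of Lemma~\ref{lem:derivatives_Z}, applied at $\beta$ and at the shifted point $((1-\alpha)\beta_1,\beta_2)$ which still lies in the regime $\beta_1\asymp\beta_2$, gives the centered cumulant generating function $\log\EEbl[e^{t(X_i^\infty-m_i^\infty)}]=\tfrac{u(\lambda)}{\beta_1\beta_2}\,\tfrac{\alpha^2}{1-\alpha}\,(1+o(1))$.

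The point that secures uniformity in $\theta$ is that the centered cumulant generating function of a partial sum $X_i^{\theta_j}$ is a sum of the same per-site nonnegative quantities $\psi_x(t)-t\,\EEbl[\omega(x)x_i]\ge0$ over a subset of $\XX$, hence is dominated by that of $X_i^\infty$; the identical estimate therefore holds at every grid angle simultaneously. A Chernoff bound at threshold $s=\tfrac\eta2\,m_i^\infty$, using $t\,m_i^\infty=\tfrac{u(\lambda)\alpha}{\beta_1\beta_2}(1+o(1))$, then yields $\PPbl[|X_i^{\theta_j}-m_i^{\theta_j}|>\tfrac\eta2\,m_i^\infty]\le\exp\{\tfrac{u(\lambda)}{\beta_1\beta_2}(-\tfrac{\alpha\eta}{2}+\tfrac{\alpha^2}{1-\alpha})(1+o(1))\}$, the lower tail being handled identically with $t=-\alpha\beta_i$, the rational factor becoming the smaller $\tfrac{\alpha^2}{1+\alpha}$. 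Optimizing the bracket over $\alpha$ (the minimizer being $\alpha=1-(1+\tfrac\eta2)^{-1/2}\asymp\eta$) produces a quantity comparable to $-\eta^2$, equal to $-\eta^2/16$ to leading order in $\eta$, so the bound takes the announced form $\exp\{-\tfrac{c(\lambda)\eta^2}{8\beta_1\beta_2}(1+o(1))\}$ with $c(\lambda)$ a positive multiple of $u(\lambda)$ (namely $u(\lambda)/2$ to leading order); the $O(1/\eta)$ union-bound prefactor is absorbed into the $1+o(1)$ since the exponent diverges like $(\beta_1\beta_2)^{-1}$.

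The main obstacle is precisely the uniformity in $\theta$: a naive union bound over a $\beta$-dependent number of angles would destroy the exponential rate. This is what the two structural facts resolve in tandem — monotonicity, which lets a grid of cardinality depending only on $\eta$ control all intermediate angles through the sandwich, together with Lemma~\ref{lem:expectation} which guarantees the grid can be chosen uniformly; and the domination of the centered partial cumulant generating functions by the full one, which lets the single explicit estimate from Lemma~\ref{lem:derivatives_Z} serve at every grid point at once. A secondary technical check is that the shifted parameter $(\beta_1-t,\beta_2)$ remains in the admissible regime, which holds because $\alpha\asymp\eta<1$ keeps the coordinate ratio bounded away from $0$ and $\infty$ and $\lambda$ bounded.
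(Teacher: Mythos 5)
Your proof is correct, but it handles the supremum over $\theta$ by a different mechanism than the paper. The paper observes that $M_\theta = X_i^\theta - m_i^\theta(\beta,\lambda)$ is a $\PPbl$-martingale in $\theta$ (by independence of the $\omega(x)$), so that $e^{tM_\theta}$ is a positive submartingale, and then applies Doob's maximal inequality to control $\sup_\theta M_\theta$ in a single stroke by $\EEbl[e^{tM_\infty}]$; the rest of the argument (expressing $\EEbl[e^{tX_i}]$ as a ratio of partition functions, invoking Lemma~\ref{lem:derivatives_Z}, and optimizing over $t \asymp \eta\beta_i$) is essentially identical to yours. You replace Doob by a monotonicity sandwich over a finite grid of angles of cardinality $O(1/\eta)$, built from the uniform convergence in Lemma~\ref{lem:expectation}, together with the observation that the centered per-site cumulant generating functions are nonnegative by Jensen, so the partial sums' centered cumulant generating functions are dominated by that of $X_i^\infty$ uniformly over grid points. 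That domination is exactly the structural fact that makes $e^{tM_\theta}$ a submartingale, so the two proofs rest on the same ingredient; the martingale route is shorter and does not need the grid or Lemma~\ref{lem:expectation} at this stage, while your route is more elementary (no stopping-time machinery) at the cost of an extra discretization step and a slightly worse, but immaterial, constant in front of $\eta^2/(\beta_1\beta_2)$. Both are absorbed into the unspecified $c(\lambda)$ and the $(1+o(1))$, so the statement is established either way.
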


\begin{proof}
    Fix $i \in \{1,2\}$ and let $M_\theta = X_i^\theta - m_i^\theta(\beta,\lambda)$ for all $\theta \geq 0$. The stochastic process $(M_\theta)_{\theta \geq 0}$ is a $\PPbl$-martingale, therefore $(e^{t M_\theta})_{\theta \geq 0}$ is a positive $\PPbl$-submartingale for any choice of $t \geq 0$ such that $\EEbl[e^{tX_i}]$ is finite. This condition is satisfied when $t < \beta_1$. Doob's martingale inequality implies for all $\eta > 0$,
    \begin{align*}
        \PPbl\left[\sup_{\theta \in [0,\infty]} M_\theta > \eta\,m_i^\infty(\beta,\lambda)\right]
        & = \PPbl\left[\sup_{\theta \in [0,\infty]} e^{tM_\theta}
            > e^{t\eta m_i^\infty(\beta,\lambda)}\right]\\
        & \leq e^{-t\eta m_i^\infty(\beta,\lambda)} \,\EEbl\left[e^{tM_\infty}\right] = e^{-t(\eta+1)m_i^\infty(\beta,\lambda)} \,\EEbl[e^{tX_i}]
    \end{align*}
    For $i = 1$, Lemma~\ref{lem:derivatives_Z} shows that the logarithm of the right-hand side satisfies
    \[
        -t(1+\eta) m_1^\infty(\beta,\lambda) + \log \frac{Z(\beta_1 - t, \beta_2 ; \lambda)}{Z (\beta_1,\beta_2;\lambda)} = \frac{c(\lambda)}{\beta_1\beta_2}\left[-\frac{t(1+\eta)}{\beta_1} - 1 + \frac{\beta_1}{\beta_1-t} + o(1)\right]
    \]
    asymptotically when $t$ and $\beta_1$ are of the same order. The same holds for $i=2$. This is roughly optimized for the choice $t = \beta_i\left(1-(1+\eta)^{-1/2}\right)$, which gives
    \[
        \PP_{\beta,\lambda} \left[\sup_{\theta \in[0,\infty]} M_\theta > \eta\, m_i^\infty(\beta,\lambda)\right] \leq \exp\left\{-\frac{2c(\lambda)}{\beta_1\beta_2}\left(1+\frac{\eta}{2}-\sqrt{1+\eta} + o(1)\right)\right\}.
    \]
    When considering the martingale defined by $N_\theta = m_i^\theta(\beta,\lambda) - X_i^\theta$, one obtains with the same method
    \[
        \PPbl\left[\sup_{\theta \in [0,\infty]} N_\theta > \eta\,m_i^\infty(\beta,\lambda)\right] \leq \exp\left\{-\frac{2c(\lambda)}{\beta_1\beta_2}\left(1 - \frac{\eta}{2} - \sqrt{1-\eta} + o(1)\right)\right\}.
    \]
    Since the previous inequalities hold for both $i\in\{1,2\}$, a simple union bound now yields
    \[
        \PPbl\left[\sup_{1\leq i \leq 2}\sup_{\theta \in [0,\infty]} \frac{|X^\theta_i - m_i^\theta(\beta,\lambda)|}{m_i^\infty(\beta,\lambda)} > \eta\right] \leq 4 \exp\left\{-\frac{c(\lambda)\eta^2}{8\beta_1\beta_2}\left(1+ o(1)\right)\right\}.
    \]
\end{proof}

We introduce the following parametrization of the arc of parabola $\sqrt{y} + \sqrt{1-x} = 1$:
\[
    x_1(\theta) = \frac{\theta(\theta+2)}{(\theta + 1)^2}, \quad x_2(\theta) = \frac{\theta^2}{(\theta + 1)^2}, \qquad \theta \in [0,\infty].
\]

\begin{theorem}[Limit shape for numerous vertices]
	\label{thm:limit_shape_numerous}
    Assume that $n_1 \asymp n_2 \to +\infty$, and $k = O(|n|^{2/3})$, and $\log |n| = o(k)$.
   There exists $c > 0$ such that for all $\eta \in (0,1)$, 
    \[
        \PP_{n,k}\left[\sup_{1\leq i \leq 2}\sup_{\theta \in [0,\infty]} \frac{|X^\theta_i - x_i(\frac{\beta_2}{\beta_1}\theta)|}{n_i} > \eta\right] \leq
        \exp\left\{-c\eta^2 k\left(1+ o(1)\right)\right\}.
    \]
    In particular,  the Hausdorff distance between a random convex polygonal line on $\frac{1}{n}\ZZ_+^2$ joining $(0,0)$ to $(1,1)$ with at most $k$ vertices and the arc of parabola $\sqrt{y}+ \sqrt{1-x} = 1$ converges in probability to $0$.
\end{theorem}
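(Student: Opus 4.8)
The plan is to transfer the exponential concentration of Lemma~\ref{lem:concentration}, established under the grand canonical measure $\PPbl$, to the uniform measure $\PPnk$ on $\Pi(n;k)$. The key is the observation, already noted above, that $\PPbl(\omega)$ depends on $\omega$ only through the triple $(X_1,X_2,K)$, so that the conditional law of $\PPbl$ given $\{X=n,\ K=k\}$ is exactly $\PPnk$. I fix $(\beta,\lambda)$ to be the calibrated parameters of Lemma~\ref{lem:parameters}, for which $\EEbl[X_i]=n_i$ and $\EEbl[K]=k$; in particular $m_i^\infty(\beta,\lambda)=\EEbl[X_i^\infty]=\EEbl[X_i]=n_i$. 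For any event $A\subset\Omega$ the conditioning identity gives the crude bound
\[
\PPnk[A]=\frac{\PPbl[A\cap\{X=n,\,K=k\}]}{\PPbl[X=n,\,K=k]}\le\frac{\PPbl[A]}{\PPbl[X=n,\,K=k]}.
\]
By the local limit theorem (Theorem~\ref{thm:local_limit}) together with $\det\Gammabl\asymp|n|^4/k$, the denominator satisfies $\PPbl[X=n,\,K=k]\asymp\sqrt k/|n|^2$, so that $1/\PPbl[X=n,\,K=k]\asymp|n|^2/\sqrt k$.

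Next I identify the deterministic profile against which $X^\theta$ concentrates. A direct computation shows that the limiting vector of Lemma~\ref{lem:expectation} equals $\bigl(x_1(\frac{\beta_2}{\beta_1}\theta),x_2(\frac{\beta_2}{\beta_1}\theta)\bigr)$ after the change of parameter $\theta\mapsto\frac{\beta_2}{\beta_1}\theta$; since the calibration yields $m_i^\infty=n_i$, this reads
\[
\sup_{\theta\in[0,\infty]}\Bigl|\tfrac{m_i^\theta(\beta,\lambda)}{n_i}-x_i(\tfrac{\beta_2}{\beta_1}\theta)\Bigr|\xrightarrow[|n|\to\infty]{}0,\qquad i\in\{1,2\}.
\]
By the triangle inequality it follows that, for $|n|$ large enough, the event $\bigl\{\sup_i\sup_\theta|X_i^\theta/n_i-x_i(\frac{\beta_2}{\beta_1}\theta)|>\eta\bigr\}$ is contained in the concentration event $\bigl\{\sup_i\sup_\theta|X_i^\theta-m_i^\theta|/m_i^\infty>\eta/2\bigr\}$, the vanishing discrepancy $\sup_\theta|m_i^\theta/n_i-x_i(\frac{\beta_2}{\beta_1}\theta)|$ being eventually below $\eta/2$.

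Applying the conditioning bound with $A$ this last event and invoking Lemma~\ref{lem:concentration},
\[
\PPnk\Bigl[\sup_{1\le i\le2}\sup_{\theta\in[0,\infty]}\frac{|X_i^\theta-n_i\,x_i(\frac{\beta_2}{\beta_1}\theta)|}{n_i}>\eta\Bigr]\le\frac{|n|^2}{\sqrt k}\,\exp\Bigl\{-\frac{c(\lambda)\,\eta^2}{32\,\beta_1\beta_2}\bigl(1+o(1)\bigr)\Bigr\}.
\]
The calibration relation for $k$ in Lemma~\ref{lem:parameters} shows that $c(\lambda)/(\beta_1\beta_2)\asymp k$ with constants uniform throughout the regime $k=O(|n|^{2/3})$ (including the limit $\lambda\to0$, where the $\lambda$-dependent factors cancel), so the exponent equals $-c\,\eta^2 k(1+o(1))$ for some $c>0$. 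The crucial step, and the one I expect to require the most care, is that this exponential decay is not spoiled by the polynomial local-limit penalty: since $\log(|n|^2/\sqrt k)=2\log|n|-\tfrac12\log k=o(k)$ precisely under the standing hypothesis $\log|n|=o(k)$, the prefactor is $e^{o(k)}$ and is absorbed into the $(1+o(1))$. This yields the announced inequality, the loss of a factor in passing from $\eta$ to $\eta/2$ affecting only the value of $c$.

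It remains to deduce Hausdorff convergence. In the renormalized picture one has $n_1=n_2=n$, which forces $\beta_1=\beta_2$ by the calibration equations, so that $x_i(\frac{\beta_2}{\beta_1}\theta)=x_i(\theta)$ and $\{(x_1(\theta),x_2(\theta)):\theta\in[0,\infty]\}$ is exactly the arc $\sqrt y+\sqrt{1-x}=1$. Because the supremum is taken over the continuous slope parameter $\theta$, the event bound above forces each parabola arc lying between two consecutive vertices to have diameter $O(\eta)$; the polygonal edges are then short chords of a convex curve and stay $O(\eta)$-close to the arc, while conversely every point of the arc is $\eta$-close to a vertex. Hence the uniform coordinate-wise bound controls the Hausdorff distance up to a multiplicative constant, and since the probability of the complementary event tends to $0$ (indeed exponentially, as $k\to\infty$), the Hausdorff distance converges to $0$ in probability. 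Finally, the uniform law on lines with at most $k$ vertices is a mixture of the fibers $\PP_{n,j}$ for $j\le k$; the estimate above applies on each fiber with $\log|n|\ll j$, and since $p(n;j)$ is log-increasing in $j$ across the relevant range these fibers carry all but a negligible fraction of the total mass, so the mixture inherits the same limiting parabola (the contribution of the remaining small-$j$ fibers being treated together with Theorem~\ref{thm:limit_shape_few}).
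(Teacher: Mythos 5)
Your proof is correct and follows essentially the same route as the paper's: the conditioning bound $\PP_{n,k}(E) \leq \PPbl(E)/\PPbl[X=n, K=k]$ with the calibrated parameters, Theorem~\ref{thm:local_limit} to control the denominator, Lemma~\ref{lem:concentration} for the numerator, Lemma~\ref{lem:expectation} plus the triangle inequality to identify the parabolic profile, and the hypothesis $\log|n| = o(k)$ to absorb the polynomial prefactor $|n|^2/\sqrt{k}$ into the exponential. Your write-up is in fact more explicit than the paper's on the identification $c(\lambda)/(\beta_1\beta_2) \asymp k$ and on the passage from the coordinatewise bound to the Hausdorff statement.
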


\begin{proof}
    Using the triangle inequality and Lemma~\ref{lem:expectation}, we need only prove the analogue of Lemma~\ref{lem:concentration} for the uniform probability $\PP_{n,k}$. Remind that the measure $\PPbl$ conditional on the event $\{X=n,K=k\}$ is nothing but the uniform probability $\PP_{n,k}$. Hence for all event $E$,
    \[
        \PP_{n,k}(E) \leq \frac{\PPbl(E)}{\PPbl(X=n,K=k)}.
    \]
    Applying this with the deviation event above for the parameters $(\beta,\lambda)$ defined in section~\ref{sec_conv_calibration} and using the Local Limit Theorem~\ref{thm:local_limit} as well as the concentration bound provided by Lemma~\ref{lem:concentration}, the right-hand side reads, up to constants,
    \[
        \frac{|n|^2}{\sqrt{k}} \,\exp\left\{-c\eta^2 k (1+o(1))\right\}.
    \]
    Since $\log |n| = o(k)$, the result follows.
\end{proof}

\section{Convex lattice polygonal lines with few vertices}
\label{sec:few}

\subsection{Combinatorial analysis}
The previous machinery does not apply in the case of very few vertices but it can be completed by an an elementary approach that we present now which will actually work up to a number of vertices negligible compared to $n^{1/3}$. It is based on the following heuristics: when $n$ tends to $+\infty$ and the number of edges $k$ is very small compared to $n$, one can expect that choosing an element of $\Pi(n;k)$ at random is somewhat similar to choosing $k-1$ vertices from $[0,1]^2$ in convex position at random. 
\Barany~\cite{barany_sylvesters_1999} and \Barany, Rote, Steiger, Zhang~\cite{barany_central_2000} proved by two different methods the existence of a parabolic limit shape in this continuous setting. These works are based on Valtr's observation that each convex polygonal line with $k$ edges is associated, by permutation of the edges, to exactly $k!$ increasing North-East polygonal lines with pairwise different slopes.
%
%
%

Our first theorem is the convex polygonal line analogue to a result of Erd{\"o}s and Lehner on integer partitions \cite[Theorem~4.1]{erdos_distribution_1941}.
\begin{theorem}
\label{thm:asymp_very_few_vertices}
    The number of convex polygonal lines joining $(0,0)$ to $(n,n)$ with $k$ edges satisfies
    \[
     p(n;k) = \frac{1}{k!}\binom{n-1}{k-1}^2\left(1+o(1)\right),
    \]
    this formula being valid uniformly in $k$ for $k = o(n^{1/2}/(\log n)^{1/4})$.
\end{theorem}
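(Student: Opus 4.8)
The plan is to turn Valtr's permutation remark into an exact decomposition of $p(n;k)$ and then control two error terms. An ordered sequence $(v_1,\dots,v_k)$ of vectors of $\ZZ_+^2$ with \emph{pairwise distinct slopes} summing to $(n,n)$ determines, by sorting the edges in order of increasing slope, a unique line of $\Pi(n;k)$, and this sorting map is exactly $k!$-to-one. If one restricts to edges with two positive coordinates, the number of ordered sequences $(v_1,\dots,v_k)$ with $v_i=(a_i,b_i)$, $a_i,b_i\ge1$ and $\sum a_i=\sum b_i=n$ is exactly $\binom{n-1}{k-1}^2$, since the $a$-parts and $b$-parts form two independent compositions of $n$ into $k$ positive summands. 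Writing $S$ for this set of sequences, $S_{\mathrm{rep}}\subset S$ for those in which two of the $v_i$ are parallel, and $B$ for the number of lines of $\Pi(n;k)$ having at least one horizontal or vertical edge, the construction gives the exact identity
\[
 p(n;k)=\frac{1}{k!}\left(\binom{n-1}{k-1}^2-|S_{\mathrm{rep}}|\right)+B .
\]
It then suffices to prove that $|S_{\mathrm{rep}}|/k!$ and $B$ are both $o\bigl(\tfrac{1}{k!}\binom{n-1}{k-1}^2\bigr)$ in the stated range.

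The heart of the matter, and the step I expect to be the main obstacle, is the estimate of $|S_{\mathrm{rep}}|$, since this is where the arithmetic weight and the precise threshold $k=o(n^{1/2}(\log n)^{-1/4})$ appear. By a union bound over the $\binom{k}{2}$ pairs of edges, I would bound $|S_{\mathrm{rep}}|$ by $\binom{k}{2}$ times the number of sequences with $v_1\parallel v_2$. Fixing $v_1+v_2=(s,t)$, the number of ordered pairs of parallel positive-coordinate vectors with this sum is exactly $\gcd(s,t)-1$, while the remaining $k-2$ edges contribute $\binom{n-s-1}{k-3}\binom{n-t-1}{k-3}$. Hence, with $w(s):=\binom{n-s-1}{k-3}$,
\[
 |S_{\mathrm{rep}}|\le \binom{k}{2}\sum_{s,t}\gcd(s,t)\,w(s)\,w(t).
\]
Using $\gcd(s,t)=\sum_{d\mid s,\ d\mid t}\varphi(d)$ and setting $D_d:=\sum_{d\mid s}w(s)$, the double sum factorises as $\sum_d\varphi(d)D_d^2$. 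Because $w$ is nonincreasing, comparing the sum over multiples of $d$ with the full sum yields $D_d\le W/d$ where $W:=\sum_s w(s)=\binom{n-1}{k-2}$; therefore $\sum_d\varphi(d)D_d^2\le W^2\sum_{d\le n}\varphi(d)/d^2\le W^2\sum_{d\le n}1/d=O(W^2\log n)$. Combining this with $\binom{n-1}{k-2}/\binom{n-1}{k-1}=(k-1)/(n-k+1)$ gives
\[
 \frac{|S_{\mathrm{rep}}|}{\binom{n-1}{k-1}^2}=O\!\left(k^2\log n\cdot\frac{k^2}{n^2}\right)=O\!\left(\frac{k^4\log n}{n^2}\right),
\]
which is $o(1)$ precisely when $k=o(n^{1/2}(\log n)^{-1/4})$, the binding constraint of the theorem.

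Finally, I would dispose of the boundary term $B$ by a cruder count. As $(1,0)$ and $(0,1)$ are the only horizontal and vertical primitive directions, a line counted in $B$ has at most one horizontal and one vertical edge, and by transposition symmetry it is enough to bound the number $B_h$ of lines with a horizontal edge $(m,0)$, $m\ge1$. Removing this edge leaves $k-1$ edges of positive slope summing to $(n-m,n)$; the dominant contribution comes from those with all coordinates positive, which by the same sorting argument is at most $\frac{1}{(k-1)!}\binom{n-m-1}{k-2}\binom{n-1}{k-2}$ (the case of an additional vertical edge being of smaller order). Summing over $m$ via $\sum_{m\ge1}\binom{n-m-1}{k-2}=\binom{n-1}{k-1}$ gives $B_h\le\frac{1}{(k-1)!}\binom{n-1}{k-2}\binom{n-1}{k-1}$, so that $B=O(k^2/n)$ times the main term, hence negligible already for $k=o(\sqrt n)$. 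Inserting the three estimates into the displayed identity yields $p(n;k)=\frac{1}{k!}\binom{n-1}{k-1}^2(1+o(1))$ uniformly over $k=o(n^{1/2}(\log n)^{-1/4})$.
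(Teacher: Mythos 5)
Your proposal is correct and reaches the right threshold; it follows the same overall architecture as the paper's proof (Valtr's $k!$-to-one sorting correspondence between convex lines and ordered sequences of distinct-slope vectors, separation of the horizontal/vertical boundary edges, and a union bound over the $\binom{k}{2}$ pairs of possibly parallel edges), but the decisive quantitative step is handled by a genuinely different and more elementary argument. Where you estimate the number of sequences containing a parallel pair by writing the count of parallel ordered pairs with fixed sum $(s,t)$ as $\gcd(s,t)-1$, expanding $\gcd(s,t)=\sum_{d\mid s,\,d\mid t}\varphi(d)$, and bounding the resulting factorised sum via $D_d\le W/d$ and $\sum_{d\le n}\varphi(d)/d^2=O(\log n)$, the paper instead phrases the lower bound probabilistically: it dominates the joint law of two edges of a uniform random composition by independent geometric vectors of parameter $1-e^{-k/n}$ and evaluates the probability that two such vectors are parallel by a Mellin-transform computation, the $\log$ arising from the double pole of $\Gamma(s)\zeta(s)^{-1}(\zeta(s-1)-\zeta(s))^2$ at $s=2$. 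Both routes produce the same error term $O(k^4\log n/n^2)$, hence the same constraint $k=o(n^{1/2}(\log n)^{-1/4})$; your totient identity is self-contained and avoids analytic continuation, while the paper's Mellin approach yields the sharp constant $1/\zeta(2)$ and fits the zeta-function machinery used throughout the rest of the article. A further cosmetic difference is that your exact identity $p(n;k)=\frac{1}{k!}\bigl(\binom{n-1}{k-1}^2-|S_{\mathrm{rep}}|\bigr)+B$ delivers upper and lower bounds in one stroke, whereas the paper proves the two bounds separately; your treatment of $B$ via removal of the horizontal edge and the hockey-stick identity is essentially the paper's three-term upper bound in a slightly sharper form.
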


\begin{proof}
    Let us start by proving an upper bound. This is done by considering the inequality
    \[
        \left| \Pi(n;k) \right| \leq \frac{1}{k!}\binom{n-1}{k-1}^2 + \frac{2}{(k-1)!}\binom{n-1}{k-2}\binom{n-1}{k-1} + \frac{1}{(k-2)!} \binom{n-1}{k-2}^2
    \]
    where the first term bounds the number of convex polygonal lines which are associated to strictly North-East lines, the second term bounds the number of lines having either a first horizontal vector or a last vertical one, and the third term bounds the numbers of convex polygonal lines having both a horizontal and a vertical vector. 

We now turn to a lower bound. Let $\{U_1, U_2, \dots, U_{k-1}\}$ and $\{V_1, V_2, \dots, V_{k-1}\}$ be two independent and uniformly distributed random subsets of $\{1,\dots,n-1\}$ of size $k-1$ whose elements are indexed in increasing order $U_1 < U_2 < \cdots < U_{k-1}$ and $V_1 < V_2 < \cdots <  V_{k-1}$. Let $M_0 = (0,0)$, $M_k = (n,n)$ and $M_i = (U_i,V_i)$ for $1 \leq i \leq k-1$. Obviously, the polygonal line $(M_0,M_1,\dots,M_n)$ has uniform distribution among all increasing polygonal line from $(0,0)$ to $(n,n)$.
We claim that the distribution of
$(\overrightarrow{M_{0}M_1},\overrightarrow{M_1M_2},\dots,\overrightarrow{M_{k-1}M_k})$ conditioned on the event that no two of these vectors are parallel is uniform among the lines of $\Pi(n,k)$ such that no side is parallel to the $x$-axis or the $y$-axis. Moreover, since the vectors are exchangeable, the probability that we can find $i < j$ such that $\overrightarrow{M_{i-1}M_i}$ and $\overrightarrow{M_{j-1}M_j}$ are parallel is bounded from above by $\binom{k}{2}$ times the probability that
$Y = \overrightarrow{M_0M_1}$ and $Z = \overrightarrow{M_1M_2}$ are parallel. Using the simple estimate
\[
    \binom{n-1}{k-1} \geq \frac{n^{k-1}}{(k-1)!}(1-o(1))
\]
which is asymptotically true since $k = o(\sqrt{n})$, we find that for all $(y,z) \in (\N^2)^2$, the probability that $Y=y$ and $Z=y$ is
\begin{align*}
    \PP(Y = y, Z=z) &= \frac{\binom{n-y_1-z_1}{k-3}\binom{n-y_2-z_2}{k-3}}{\binom{n-1}{k-1}^2}\\
    & \leq \frac{4k^2}{n^2}\left(1 - \frac{y_1+z_1}{n}\right)_+^{k-3}\left(1-\frac{y_2+z_2}{n}\right)_+^{k-3}\\
    & \leq \frac{4k^2}{n^2} \exp\left\{-\frac{k-3}{n}\left(y_1+y_2+z_1+z_2\right)\right\}.
\end{align*}
We can therefore dominate the probability that $Y$ and $Z$ are parallel by the probability that geometrically distributed random vectors are parallel, which is exactly estimated in the following lemma applied with $\beta = \frac{k}{n}$. In conclusion, the probability that at least two vectors are parallel is bounded by $\frac{k^4}{n^2}\log(n)$ up to a constant.
\end{proof}

\begin{lemma}
    Let $Y_1,Y_2,Z_1,Z_2$ be independent and identically distributed geometric random variables of parameter $1-e^{\beta}$ with $\beta > 0$. When $\beta$ goes to $0$, the probability that the vectors $Y=(Y_1,Y_2)$ and $Z = (Z_1,Z_2)$ are parallel is asymptotically equal to
    \[
        \frac{\beta^2}{\zeta(2)}\log \frac{1}{\beta}.
    \]
\end{lemma}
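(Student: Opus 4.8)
The plan is to compute the probability that two independent geometric random vectors $Y=(Y_1,Y_2)$ and $Z=(Z_1,Z_2)$ in $\N^2$ are parallel. Since parallelism of integer vectors is governed by their reduced forms, the natural first step is to decompose each vector according to its primitive direction: any $Y \in \N^2$ can be written uniquely as $Y = m\cdot x$ where $x \in \XX$ is a primitive vector and $m \geq 1$ is an integer. Two vectors $Y$ and $Z$ are then parallel precisely when they share the same primitive direction $x \in \XX$. The strategy is therefore to condition on the common direction $x$ and sum over all $x \in \XX$.

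**Reduction to a sum over primitive vectors.** For a fixed primitive $x = (x_1,x_2) \in \XX$, the event that $Y$ is a positive integer multiple of $x$ has probability
\[
    \PP[Y \parallel x] = \sum_{m \geq 1} (1-e^{-\beta})^2 e^{-\beta m(x_1+x_2)} = (1-e^{-\beta})^2\frac{e^{-\beta(x_1+x_2)}}{1-e^{-\beta(x_1+x_2)}},
\]
using that each coordinate $Y_i$ is geometric, so $\PP[Y_i = m x_i] = (1-e^{-\beta})e^{-\beta m x_i}$. By independence of $Y$ and $Z$, the probability that both lie in direction $x$ is the square of this quantity, and summing over the disjoint directions gives
\[
    \PP[Y \parallel Z] = (1-e^{-\beta})^4 \sum_{x \in \XX}\left(\frac{e^{-\beta(x_1+x_2)}}{1-e^{-\beta(x_1+x_2)}}\right)^2.
\]
I would take care to note that the primitive vectors $(1,0)$ and $(0,1)$ are included in $\XX$ but contribute only a negligible $O(\beta^2)$ term, so they do not affect the leading asymptotics.

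**Extracting the asymptotics.** As $\beta \to 0$ we have $(1-e^{-\beta})^4 \sim \beta^4$, so the task reduces to the asymptotic behaviour of the sum $S(\beta) = \sum_{x\in\XX}\bigl(e^{-\beta(x_1+x_2)}/(1-e^{-\beta(x_1+x_2)})\bigr)^2$. Writing $s = x_1+x_2$, each term depends only on $s$, and the expected leading contribution comes from the regime $\beta s$ small where the summand is $\sim (\beta s)^{-2}$; this produces a logarithmic divergence, which is the source of the $\log(1/\beta)$ factor. To make this rigorous I would sort the primitive vectors by the value of $s = x_1+x_2$: the number of $x \in \XX$ with $x_1+x_2 = s$ is essentially the Euler totient count of coprime pairs, whose average density is $1/\zeta(2)$ (since the probability that two integers are coprime is $1/\zeta(2)$). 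Thus the sum behaves like $\frac{1}{\zeta(2)}\sum_{s} s \cdot \bigl(e^{-\beta s}/(1-e^{-\beta s})\bigr)^2$, and a comparison with the integral $\int_1^\infty t\,(e^{-\beta t}/(1-e^{-\beta t}))^2\,dt$ gives the leading term $\frac{1}{\zeta(2)}\cdot\beta^{-2}\log(1/\beta)$.

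**Main obstacle.** The delicate point is controlling the coprimality count $\#\{x\in\XX : x_1+x_2 = s\}$ and justifying that its insertion into the weighted sum yields exactly the factor $1/\zeta(2)$ in the leading term, with the error terms staying subdominant relative to $\beta^{-2}\log(1/\beta)$. The cleanest way is via \Moebius inversion: write $\sum_{x\in\XX} = \sum_{d\geq 1}\mu(d)\sum_{x\in d\cdot\N^2}$, which disentangles the coprimality condition into the Riemann zeta value $\sum_d \mu(d)/d^2 = 1/\zeta(2)$ while the inner lattice sums over all of $\N^2$ are handled by the integral comparison above. Assembling $\beta^4 \cdot S(\beta) \sim \beta^4 \cdot \frac{1}{\zeta(2)}\beta^{-2}\log(1/\beta) = \frac{\beta^2}{\zeta(2)}\log\frac{1}{\beta}$ then yields the claimed asymptotic equivalent.
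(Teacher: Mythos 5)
Your proposal is correct, and the opening reduction is exactly the paper's: both decompose the parallelism event over the common primitive direction $x\in\XX$ and over the pair of multiples, arriving at $(1-e^{-\beta})^4\sum_{x\in\XX}\sum_{i,j\geq1}e^{-\beta(i+j)(x_1+x_2)}$ (your squared geometric tail is the same double sum in closed form). Where you genuinely diverge is in extracting the asymptotics. The paper takes the Mellin transform of the double sum in the variable $\beta$, identifies it as $\frac{\Gamma(s)}{\zeta(s)}(\zeta(s-1)-\zeta(s))^2$, and reads off the leading behaviour from the Laurent expansion at the double pole $s=2$ by the residue theorem; the double pole is what produces the $\log\frac{1}{\beta}$, and the $\frac{1}{\zeta(s)}$ factor evaluated at $s=2$ produces the $\frac{1}{\zeta(2)}$. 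You instead propose an elementary real-variable argument: group the primitive vectors by $s=x_1+x_2$ (the count is $\phi(s)$, of average density $s/\zeta(2)$, or equivalently handle coprimality by \Moebius inversion so that $\sum_d\mu(d)/d^2=1/\zeta(2)$ appears), and compare the resulting one-dimensional sum with the integral $\int_1^\infty t\,(e^{-\beta t}/(1-e^{-\beta t}))^2\,dt\sim\beta^{-2}\log\frac{1}{\beta}$. Both routes are sound; yours is more self-contained and avoids justifying the contour shift, while the paper's Mellin/residue computation comes with a bonus — it yields the full second-order term $-C\beta^{-2}$ with an explicit constant $C=(2\zeta(2)-\zeta'(2)-1-\gamma)/\zeta(2)$, which an integral comparison of your type would recover only with noticeably more bookkeeping (via Abel summation against $\sum_{s\leq S}\phi(s)=\frac{S^2}{2\zeta(2)}+O(S\log S)$ and a careful truncation of the \Moebius sum). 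For the purposes of the enclosing theorem only the leading order is needed, so your argument suffices.
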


\begin{proof}
    The probability that $Y$ and $Z$ are parallel is
    \[
        \sum_{x\in \XX}\sum_{i,j \geq 1} \PP(Y = i\,x, Z = j \,x) = (1-e^{-\beta})^4 \sum_{x\in\XX}\sum_{i,j \geq 1} e^{-\beta(i+j)(x_1 + x_2)}.
    \]
    The Mellin transform of the double summation in the right-hand side with respect to $\beta > 0$ is well-defined for all $s \in \CC$ with $\Re(s) > 2$ and it is equal to
    \[
        \sum_{x\in \XX}\sum_{i,j \geq 1} \frac{\Gamma(s)}{(x_1+x_2)^s(i+j)^s} = \frac{\Gamma(s)}{\zeta(s)}(\zeta(s-1)-\zeta(s))^2.
    \]
    Expanding this Mellin transform in Laurent series at the pole $s = 2$ of order $2$ and using the residue theorem to express the Mellin inverse, one finds
    \[
        \sum_{x\in \XX}\sum_{i,j \geq 1} e^{-\beta(i+j)(x_1+x_2)} = \frac{1}{\zeta(2)}\frac{\log \frac{1}{\beta}}{\beta^2} - \frac{C}{\beta^2} + O\left(\frac{1}{\beta}\right),\qquad \text{as }\beta\to 0.
    \]
    where $C = \frac{2\zeta(2) - \zeta'(2) - 1 - \gamma}{\zeta(2)} \approx 0.471207$.
\end{proof}

\subsection{Limit shape}

\begin{theorem}[Limit shape for few vertices]
\label{thm:limit_shape_few}
The Hausdorff distance between a random convex polygonal line in $(\frac{1}{n}\ZZ \cap [0,1])^2$ joining $(0,0)$ to $(1,1)$ having at most $k$ vertices and the arc of parabola $\sqrt{\vphantom{x}y}+ \sqrt{\vphantom{y}1-x} = 1$ converges in probability to $0$ when both $n$ and $k$ tend to $+\infty$ with $k = o(n^{1/3})$.
\end{theorem}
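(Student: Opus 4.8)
The plan is to transfer the parabolic limit shape already known in the continuous model to our lattice setting, using the explicit coupling built in the proof of Theorem~\ref{thm:asymp_very_few_vertices}. The first observation is a reduction from ``at most $k$'' to ``exactly $k$'' vertices. Since the formula of Theorem~\ref{thm:asymp_very_few_vertices} is valid uniformly and gives $p(n;j)/p(n;j-1) \asymp n^2/j^3 \to +\infty$ throughout the range $j \le k = o(n^{1/3})$ (because $j^3 = o(n)$), the uniform measure on convex lines with at most $k$ vertices puts mass $1-o(1)$ on lines with exactly $k$ vertices. It therefore suffices to prove the statement for the uniform measure $\PP_{n,k}$ on $\Pi(n;k)$.

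Next I would reuse the coupling from the proof of Theorem~\ref{thm:asymp_very_few_vertices}. Let $U = \{U_1 < \cdots < U_{k-1}\}$ and $V = \{V_1 < \cdots < V_{k-1}\}$ be independent uniform $(k-1)$-subsets of $\{1,\dots,n-1\}$, and form the increasing lattice path $M_0 = (0,0)$, $M_i = (U_i,V_i)$, $M_k = (n,n)$, whose edge vectors are $e_i = (\Delta U_i, \Delta V_i)$ with $\Delta U_i = U_i - U_{i-1}$. Sorting these $k$ edges by increasing slope produces a convex line, and, exactly as in that proof, by exchangeability of the edges together with Valtr's correspondence, the law of the sorted line conditioned on pairwise distinct slopes is the uniform measure on lines of $\Pi(n;k)$ with no side parallel to the axes. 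The conditioning event has probability $1 - O(k^4 n^{-2}\log n) = 1-o(1)$, and lines with an axis-parallel side occur with probability $O(k/n)=o(1)$ and anyway do not affect the Hausdorff distance. Hence it is enough to establish the limit shape for the \emph{sorted} increasing-path model.

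In this model the partial sums $X_i^\theta = \sum_j e_{j,i}\,\indicator{e_{j,2} \le \theta e_{j,1}}$ are sums over the $k$ exchangeable edges. The crucial point is that the rescaled spacings $\tfrac{k}{n}\Delta U_i$ and $\tfrac{k}{n}\Delta V_i$ become asymptotically independent and $\mathrm{Exp}(1)$-distributed as $k\to\infty$, which is the precise sense in which the lattice model approaches the continuous convex-chain model of \Barany~\cite{barany_sylvesters_1999} and \Barany, Rote, Steiger, Zhang~\cite{barany_central_2000}. A law of large numbers over the $k$ edges then gives, for each fixed $\theta$ and with $a,b$ independent $\mathrm{Exp}(1)$,
\[
  \frac{X_1^\theta}{n} \xrightarrow{\;\PP\;} \E\bigl[a\,\indicator{b \le \theta a}\bigr] = \frac{\theta(\theta+2)}{(\theta+1)^2} = x_1(\theta), \qquad \frac{X_2^\theta}{n} \xrightarrow{\;\PP\;} \E\bigl[b\,\indicator{b\le\theta a}\bigr] = \frac{\theta^2}{(\theta+1)^2} = x_2(\theta),
\]
so the limiting profile is precisely the parabola arc in its parametrization $(x_1(\theta),x_2(\theta))$. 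Since $\theta\mapsto X_i^\theta$ is nondecreasing and the limits $x_i(\theta)$ are continuous, pointwise convergence in probability upgrades to uniform convergence over $\theta\in[0,\infty]$ by a Dini type argument, exactly the simple-to-uniform passage already invoked in Lemma~\ref{lem:expectation}. Uniform closeness of the rescaled vertex trajectory $(X_1^\theta/n,X_2^\theta/n)$ to the parabola then forces the Hausdorff distance to tend to $0$ in probability.

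The main obstacle is the uniform-in-$\theta$ control in the last step, together with the quantitative spacings-to-exponential approximation that feeds it: the increments are neither independent (their sums are pinned to $n$) nor continuous (they live on the integer lattice), so one must show that neither the sum constraint nor the discretization nor the distinct-slope conditioning distorts the limiting profile, uniformly over all slopes. The variance of each $X_i^\theta/n$ is $O(1/k)$, which already yields pointwise concentration as soon as $k\to\infty$; the stronger hypothesis $k = o(n^{1/3})$ is what guarantees that the edges are long enough (length $\asymp n/k$) for the continuous approximation to be faithful to leading order, thereby recovering the limit shape down to the regime where the estimates of Section~\ref{sec:dca} take over.
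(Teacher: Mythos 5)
Your route is genuinely different from the paper's, and the difference matters. The paper does not re-derive a law of large numbers in the discrete model: it builds the continuous chain of \Barany--Rote--Steiger--Zhang from uniform points on the two axes, invokes \cite{barany_central_2000} as a black box for the parabolic limit shape (including the uniform-in-$\theta$, i.e.\ Hausdorff, statement), and then only controls the discretization. Rounding $(U_i,V_i)$ to $\frac{1}{n}\ZZ^2$ moves the sorted line by at most $\frac{k}{n}$ in Hausdorff distance \emph{provided} the slope order of the rounded edges agrees with that of the continuous ones, and the probability that some pair gets reordered is $O\bigl(\binom{k}{2}\cdot\frac{k}{n}\bigr)=O(k^3/n)$ --- this is exactly where the hypothesis $k=o(n^{1/3})$ enters. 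The final total-variation comparison with the uniform measure on $\Pi(n;k)$ via Valtr's observation is essentially as you describe, and your preliminary reduction from ``at most $k$'' to ``exactly $k$'' vertices via the ratio $p(n;j)/p(n;j-1)\asymp n^2/j^3\to\infty$ is a correct and worthwhile addition that the paper leaves implicit.

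The gap in your version is that you replace the appeal to the continuous limit-shape theorem by a direct law of large numbers for the sorted discrete spacings, and then leave that LLN unproved. Your identification of the limit profile is right ($\E[a\,\indicator{b\le\theta a}]=x_1(\theta)$ and $\E[b\,\indicator{b\le\theta a}]=x_2(\theta)$ for independent standard exponentials $a,b$), and the monotonicity argument upgrading pointwise to uniform convergence is sound. But the pointwise concentration of $\frac{1}{n}\sum_j \Delta U_j\,\indicator{\Delta V_j\le\theta\Delta U_j}$ for spacings that are exchangeable, pinned to total sum $n$, and lattice-valued is precisely the step you label ``the main obstacle'' without executing it; that step is the mathematical content of the theorem, and it is exactly what the paper outsources to \cite{barany_central_2000}. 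It is fillable --- a second-moment bound using the exact joint law $\PP(Y=y,Z=z)$ already displayed in the proof of Theorem~\ref{thm:asymp_very_few_vertices} gives $\mathrm{Var}(X_i^\theta/n)=O(1/k)$ --- but as written the proof is incomplete. Relatedly, your sketch never pins down where $k=o(n^{1/3})$ is actually used: in your direct approach the only visible constraint is $k=o(n^{1/2}/(\log n)^{1/4})$ from the distinct-slope conditioning, and the vague claim that $n^{1/3}$ makes ``the continuous approximation faithful'' does not correspond to any concrete estimate; in the paper the exponent $1/3$ comes from the slope-reordering bound above. (Minor point: under the uniform measure on $\Pi(n;k)$ the axis-parallel-side event has probability $O(k^2/n)$, not $O(k/n)$; still $o(1)$.)
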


\begin{proof}
\Barany~\cite{barany_sylvesters_1999} and \Barany, Rote, Steiger, Zhang~\cite{barany_central_2000} proved by two different methods the existence of a limit shape in the following continuous setting: if one picks at random $k-1$ points uniformly from the square $[0,1]^2$, then conditional on the event that these points are in convex position, the Hausdorff distance between the convex polygonal line thus defined and the parabolic arc goes to $0$ in probability as $k$ goes to $+\infty$. Our strategy is to show that this result can be extended to the discrete
setting $([0,1] \cap \frac{1}{n}\ZZ)^2$ if $k$ is small enough compared to $n$ by using a natural embedding of the discrete model into the continuous model.

For this purpose, we first observe that the distribution of the above continuous model can be described as follows: pick uniformly at random $k-1$ points from both the $x$-axis and the $y$-axis, rank them in increasing order and let $0 = U_0  < U_1 <
U_2 < \dots < U_{k-1} < U_k = 1$ and $0 = V_0 < V_1 < V_2 < \dots < V_{k-1} < V_k = 1$ denote this ranking. The points $(U_i,V_i)$ define an increasing North-East polygonal line joining $(0,0)$ to $(1,1)$. Reordering the segment lines of this line by increasing slope order, exchangeability arguments show that we obtain a convex line with $k$ edges that follows the desired distribution. This is analogous to the discrete construction of strictly North-East convex lines from $(0,0)$ to $(n,n)$ that occurs in the proof of Theorem~\ref{thm:asymp_very_few_vertices}.

Now, we define the lattice-valued random variables $\tilde{U}_0 \leq \tilde{U}_1 \leq \tilde{U}_2 \leq \dots \leq \tilde{U}_{k-1} \leq \tilde{U}_{k}$ and  $\tilde{V}_0  \leq \tilde{V}_1 \leq \tilde{V}_2 \leq \dots \leq \tilde{V}_{k-1} \leq \tilde{V}_k$ by discrete approximation:
\[
    \begin{cases}
        \tilde{U}_i \in \frac{1}{n}\ZZ, \quad U_i \leq \tilde{U}_i < U_i + \frac{1}{n}\\
        \tilde{V}_i \in \frac{1}{n}\ZZ, \quad V_i - \frac{1}{n} < \tilde{V}_i \leq V_i,
    \end{cases}\qquad \text{for }1 \leq i \leq k-1.
\]
Remark that we still have $(\tilde{U}_0,\tilde{V}_0) = (0,0)$ and $(\tilde{U}_k,\tilde{V}_k) = (1,1)$.

Let $X_i = (U_i - U_{i-1} ,V_i - V_{i-1})$ and let $\tilde{X}_i = (\tilde{U}_i - \tilde{U}_{i-1} ,\tilde{V}_i - \tilde{V}_{i-1})$ be the discrete approximation of $X_i$ for $1 \leq i \leq k$. Conditional on the event that the slopes of $(X_1,\dots,X_k)$ and $(\tilde{X}_1,\dots, \tilde{X}_k)$ are pairwise distinct and ranked in the same order, the Hausdorff distance between the associated convex polygonal lines is bounded by $\frac{k}{n}$, which goes asymptotically to $0$. Since a direct
application of \cite[Theorem~2]{barany_central_2000} shows that the distance between the convex line associated to $X$ and the parabolic arc converges to $0$ in probability as $k$ tends to $+\infty$, we deduce
that the Hausdorff distance between the convex line associated to $\tilde{X}$ and the parabolic arc also converges in probability to $0$ on this event.
As in the proof of Theorem~\ref{thm:asymp_very_few_vertices}, the joint density of $(X_i,X_j)$ is dominated by the density of a couple of independent vectors
whose coordinates are independent exponential variables with parameter $k$. These vectors being of order of magnitude $\frac{1}{k}$, the order of the slopes of $(X_i,X_j)$ and $(\tilde{X}_i,\tilde{X}_j)$ may be reversed only if the angle between $X_i$ and $X_j$ is smaller than $\frac{ck}{n}$ for some $c > 0$, which happens with probability of order $\frac{k}{n}$. Consequently, the probability that there exists $i < j$ for which the slopes of $(X_i,X_j)$ and $(\tilde{X}_i,\tilde{X}_j)$ are ranked in opposite is bounded, up to a constant, by $\binom{k}{2} \frac{k}{n}$. Therefore, the Hausdorff distance between the convex line associated to $\tilde{X}$ and the parabolic arc also converges to $0$ in probability if $k = o(n^{1/3})$.

The final step is to compare the distribution of the increasing reordering of $(\tilde{X}_1,\dots,\tilde{X}_k)$ with the uniform distribution on $\Pi(n;k)$. As a consequence of Theorem~\ref{thm:asymp_very_few_vertices}, the probability that a uniformly random element of $\Pi(n;k)$ is strictly North-East tends to $1$. The key point, which follows from Valtr's observation, is that the uniform distribution on strictly North-East convex lines with $k$ edges coincides with the distribution of the line obtained by reordering the vectors $(\tilde{X}_1,\dots,\tilde{X}_k)$, conditional on the event that these vectors are pairwise linearly independent and strictly North-East. Since we showed in the previous paragraph that all the angles between two vectors of $(\tilde{X}_1,\dots,\tilde{X}_k)$ are at least $\frac{ck}{n}$ with probability $1 - O(\frac{k^3}{n})$, the linear independence condition occurs with probability tending to $1$.  On the other hand, $(\tilde{X}_1,\dots,\tilde{X}_k)$ are strictly North-East with probability $1 - O(\frac{k^2}{n})$. Therefore, the event we conditioned on has a probability tending to $1$, which proves that the total variation distance between the two distributions tends to $0$.
\end{proof}

\section{Back to \Jarnik's problem}
\label{sec:jarnik}

In \cite{jarnik_uber_1926}, \Jarnik gives an asymptotic formula of the maximum possible number of vertices of a convex lattice polygonal line having a \emph{total Euclidean length} smaller than $n$, and whose segments make an angle with the $x$-axis between $0$ and $\frac{\pi}{4}$. What he finds is $\frac{3}{2}\,\frac{n^{2/3}}{(2\pi)^{1/3}}$. If, in order to be closer to our setting, we  ask the segments to make an angle with the $x$-axis between $0$ and $\frac{\pi}{2}$, \Jarnik's formula is
changed into $\frac{3}{2}\frac{n^{2/3}}{\pi^{1/3}}$ (which is  twice the above result for $\frac{n}{2}$). 

In this section, we want to present a detailed combinatorial analysis of this set of lines, which leads to  \Jarnik's result as well as to the asymptotic of the \emph{typical} number of vertices of such lines. It is the analog
of \Barany, \Sinai and Vershik's result when the constraint concerns the total length.

Let us first describe \Jarnik's argument, which is a good application of the correspondence described in section~\ref{sec:correspondence}. It says the following: the function $\omega$ realizing the maximum can be taken among the functions taking their values in $\{0,1\}$. Indeed, by changing the non-zero values of a function $\nu$ into $1$, one can obtain a polygonal line with the same number of vertices, but with a shorter length. Now, if the number of vertices $k$ is given, the convex line having minimal
length, will be defined by the function $\omega$ which associates $1$ to the $k$ points  of $\XX$ which are the closest to the origin.
Since the set $X$ has an asymptotic density $\frac{6}{\pi^2}$, when  $N$ is big, this set  of points is asymptotically equivalent to the intersection of $X$ with the disc of center $O$ having radius $R$ satisfying $\frac{6}{\pi^2}\cdot  \frac{\pi R^2}{4}=N$ i.e. $R=(\frac{2\pi}{3} N)^{1/2}$. The total length of the line is equivalent to 
$L= \int_0^R r\times \frac{6}{\pi^2}\frac{\pi}{2}r dr=\frac{R^3}{\pi}=\frac{(\frac{2\pi}{3} N)^{3/2}}{\pi}$. 
This yields precisely $N=\frac{3}{2} \frac{L^{2/3}}{\pi^{1/3}}\simeq1.02\, L^{2/3}$. 

In order to get finer results, we introduce the probability distribution on the space $\Omega$ proportional to
\[
    \exp\left(-\beta \sum_{x \in \XX} \omega(x) \sqrt{|x_1|^2 + |x_2|^2}\right) \lambda^{\sum_{x\in \XX} 1_{\{\omega(x) > 0\}}}
\]
which depends on two parameters $\beta,\lambda$. In this set-up, the partition function turns out to be
\[
    Z = \prod_{x\in \XX} \frac{1-(1-\lambda)e^{-\beta \sqrt{|x_1|^2 + |x_2|^2}}}{1-e^{-\beta \sqrt{|x_1|^2+|x_2|^2}}}.
\]
The Mellin transform representation for $\log Z$ now involves
\[
    \frac{\Gamma(s)(\Li_{s+1}(1-\lambda)-\zeta(s+1))}{\zeta(s)}\sum_{x_1,x_2 \geq 1} {(|x_1|^2+|x_2|^2)}^{-s/2}, \qquad \Re(s) > 2.
\]
The factors $\zeta(s)^{-1}$ and $\Li_{s+1}(1-\lambda)-\zeta(s+1)$, which correspond respectively to the coprimality condition on the lattice and to the penalty of vertices, are still present. The main difference relies in the replacement of the Barnes zeta function by the Epstein zeta function which comes from the penalty by length in the model. With the help of the residue analysis of this Mellin transform and a local limit theorem, we obtain:

\begin{theorem}
    Let $p_J(n;k)$ denote the number of convex polygonal lines on $\ZZ_+^2$ issuing from $(0,0)$ with $k$ vertices and length between $n$ and $n+1$. As $n$ tends to $+\infty$,
    \[
        \text{if}\qquad \frac{k}{n^{2/3}} \longrightarrow \frac{\pi^{1/3}}{2}\cc(\lambda), \qquad\text{then}\qquad  \frac{1}{n^{2/3}} \log p_J(n;k) \longrightarrow \frac{\pi^{1/3}}{2}\e(\lambda),
    \]
    where $\e$ and $\cc$ are the functions introduced in Theorem~\ref{thm:detailed_comb}. Moreover, the Hausdorff distance between a random element of this set normalized by $\frac{1}{n}$, and the arc of circle $\{(x,y) \in [0,1]^2 \mid x^2 + (y-1)^2 = 1\}$ converges to $0$ in probability.
\end{theorem}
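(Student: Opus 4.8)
The plan is to run the machinery of Sections~\ref{sec:dca} and~\ref{sec:limit_shape} verbatim, replacing the endpoint constraint by the length constraint and the Barnes zeta function by the Epstein zeta function $E^*(s) := \sum_{x \in \XX}(x_1^2+x_2^2)^{-s/2}$. First I would record the product formula for the partition function stated in the text and, following the proof of Lemma~\ref{lem:integral_partition_function}, establish the contour representation
\[
    \log Z = \frac{1}{2i\pi}\int_{c-i\infty}^{c+i\infty}(\zeta(s+1)-\Li_{s+1}(1-\lambda))\,\beta^{-s}E^*(s)\,\Gamma(s)\,ds,\qquad c>2.
\]
The analytic inputs are the analogues of Lemma~\ref{lem:meromorphic_continuation}: the factorization $E^*(s)=\zeta(s)^{-1}\sum_{x_1,x_2\ge1}(x_1^2+x_2^2)^{-s/2}$ plus two entire axis terms, the meromorphic continuation and polynomial vertical growth of the quarter-plane Epstein zeta function obtained by Euler--Maclaurin summation, and the fact that its unique pole in $\Re(s)>1$ sits at $s=2$ with residue $\frac{\pi}{2}$ (a quarter of the residue $2\pi$ of the full planar Epstein zeta function, coming from the area of the quarter disc).

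Second, the residue theorem, exactly as in Lemma~\ref{lem:derivatives_Z}, shifts the contour past $s=2$ and yields, with $u(\lambda)=(\zeta(3)-\Li_3(1-\lambda))/\zeta(2)$,
\[
    \log Z \sim \frac{\pi}{2}\,\frac{u(\lambda)}{\beta^2},
\]
together with uniform bounds on its $\beta$- and $\lambda$-derivatives. Differentiation gives $\EEbl[L]=-\partial_\beta\log Z\sim \pi u(\lambda)\beta^{-3}$ and $\EEbl[K]=\lambda\partial_\lambda\log Z\sim -\tfrac{\pi}{2}\lambda\partial_\lambda\Li_3(1-\lambda)\,\zeta(2)^{-1}\beta^{-2}$. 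Calibrating $\beta=(\pi u(\lambda)/n)^{1/3}$ so that $\EEbl[L]=n$ then forces $\EEbl[K]\sim\frac{\pi^{1/3}}{2}\cc(\lambda)\,n^{2/3}$ after the elementary identity $-\lambda\partial_\lambda\Li_3(1-\lambda)=\lambda\Li_2(1-\lambda)/(1-\lambda)$. In other words the whole calibration is the one of Lemma~\ref{lem:parameters} under the substitution replacing $\beta_1\beta_2$ by $\tfrac{2}{\pi}\beta^2$, which is precisely what produces the recurring factor $\frac{\pi^{1/3}}{2}$. Feeding these estimates into the master identity $p_J(n;k)=Z\,e^{\beta n}\lambda^{-k}\,\PPbl[L\in[n,n+1],K=k]$ and noting that $\log Z\sim\tfrac12\beta n$ gives $\log p_J(n;k)\sim\frac{\pi^{1/3}}{2}\e(\lambda)\,n^{2/3}$, provided the probability factor is only polynomially small.

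The hard part, and the genuine novelty compared with Section~\ref{sec:dca}, is the local limit theorem controlling $\PPbl[L\in[n,n+1],K=k]$. Here $K$ is integer-valued, but the length $L=\sum_x\omega(x)\sqrt{x_1^2+x_2^2}$ is a real, non-lattice random variable, so one needs a mixed local limit theorem: a density estimate in the $L$-direction and a lattice estimate in the $K$-direction. The covariance and Lyapunov estimates of Lemmas~\ref{lem:covariance} and~\ref{lem:lyapunov} transfer with the same polynomial orders, so the only hypothesis of Lemma~\ref{lem:framework} requiring a new argument is the Cramér-type bound~\eqref{eq:cramer}. Because $L$ is non-lattice, the dual variable $t$ of $L$ ranges over all of $\RR$ rather than a torus, and one must bound $|\phibl(t,u)|=\prod_x|\phibl^x(t,u)|$ uniformly for $t$ bounded away from $0$, including $|t|\to\infty$. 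This amounts to showing that, for every such $t$, a positive density of primitive vectors $x$ in an annulus $\{\tfrac14<e^{-\beta\|x\|}\le\tfrac13\}$ satisfies $\lvert\sin(t\|x\|/2)\rvert\geq\epsilon$; equivalently, that the real numbers $\sqrt{x_1^2+x_2^2}$ do not concentrate modulo $2\pi/t$. I expect this equidistribution (Diophantine) input to be the main obstacle, the remaining analysis being a straightforward adaptation of Lemma~\ref{lem:cramer}. Granting it, the density local limit theorem yields $\PPbl[L\in[n,n+1],K=k]=n^{-O(1)}$, which is negligible on the exponential scale and completes the count.

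For the limit shape I would reproduce Lemma~\ref{lem:expectation} with the restricted Epstein zeta function $\sum_{x_2\le\theta x_1}\|x\|^{-s}$ in place of the restricted Barnes zeta function. The decisive simplification is that the weight $\|x\|$ is isotropic: passing to polar coordinates, the radial integral factors out as a constant and the angular integral over the sector $\{0\le\arg x\le\arctan\theta\}$ produces
\[
    \frac{m_1^\theta}{m_1^\infty}=\sin(\arctan\theta)=\frac{\theta}{\sqrt{1+\theta^2}},\qquad
    \frac{m_2^\theta}{m_2^\infty}=1-\cos(\arctan\theta)=1-\frac{1}{\sqrt{1+\theta^2}},
\]
which is exactly the parametrization $x=\sin\psi,\ y=1-\cos\psi$ of the arc of circle $x^2+(y-1)^2=1$; thus the circle replaces the parabola precisely because length, unlike the endpoint coordinates, is rotation invariant. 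The exponential concentration of Lemma~\ref{lem:concentration} carries over, since each $X_i^\theta$ is still a $\PPbl$-martingale in $\theta$ with finite exponential moments for $t<\beta$, and Doob's inequality gives the same Gaussian-type deviation bound. Finally, as in the proof of Theorem~\ref{thm:limit_shape_numerous}, the local limit theorem lets one transfer this concentration from $\PPbl$ to the conditioned uniform measure on lines of length in $[n,n+1]$ with $k$ vertices, so that the Hausdorff distance to the arc of circle converges to $0$ in probability.
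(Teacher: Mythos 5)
Your proposal follows the same route as the paper, which for this theorem only sketches the argument: the Gibbs measure penalizing Euclidean length, the product and Mellin representations of $\log Z$ with the quarter-plane Epstein zeta function replacing the Barnes zeta function, residue analysis at $s=2$ (your residue $\tfrac{\pi}{2}$, the calibration producing the factor $\tfrac{\pi^{1/3}}{2}$, the identity $-\lambda\partial_\lambda\Li_3(1-\lambda)=\lambda\Li_2(1-\lambda)/(1-\lambda)$, and the circle parametrization $(\sin\psi,1-\cos\psi)$ are all correct), followed by a local limit theorem and the martingale concentration argument. The one step you flag but do not carry out --- a Cram\'er-type bound on the characteristic function of the non-lattice variable $L$ for all real $t$ bounded away from $0$, which rests on the non-concentration of the lengths $\sqrt{x_1^2+x_2^2}$ modulo $2\pi/t$ --- is indeed the only genuinely new ingredient relative to Section~\ref{sec:dca}, and the paper does not supply it either, so your treatment is at least as detailed as the paper's own.
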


From this result, we deduce that the typical number of vertices of such a line which is achieved for $\lambda=1$ is asymptotically equal to
\[
    \left(\frac{3}{4\pi\zeta(3)^2}\right)^{1/3} n^{2/3}.
\]
Similarly, the total number of convex lattice polygonal lines having length between $n$ and $n+1$ is asymptotically equal to
\[
    \exp\left(\frac{3^{4/3}\zeta(3)^{1/3}}{(4\pi)^{1/3}}\, n^{2/3} (1+o(1))\right).
\]
In addition, we can derive \Jarnik's result in the lines of Remark~\ref{rem:longest}.

\section{Mixing constraints and finding new limit shapes}
\label{sec:onion}

In this section we introduce a family of convex lattice polygonal line models which achieves a continuous interpolation of limit shapes between the diagonal of the square and the South-East corner sides of the square, passing through the arc of circle and the arc of parabola. Let $\|\cdot\|_1$ and $\|\cdot\|_2$ denote respectively the Taxicab norm and the Euclidean norm on $\RR^2$. Recall that for all $x \in \RR^2$,
\[
    \|x\|_1 = |x_1| + |x_2| \geq \|x\|_2 = \sqrt{|x_1|^2+|x_2|^2} \geq \frac{1}{\sqrt{2}}\|x\|_1.
\]

The Gibbs distribution we consider on the space $\Omega$ involves both these norms in order to take into account both the extreme point of the line and its length:
\[
    \frac{1}{Z}\exp\left(-\beta \sum_{x\in\XX} \omega(x) (\|x\|_1 + \lambda\sqrt{2} \|x\|_2)\right), \quad
    Z = \prod_{x \in \XX} \left(1 - e^{-\beta(\|x\|_1 + \lambda\sqrt{2}\|x\|_2)}\right).
\]
This infinite product is convergent if $\beta >0$ and $\lambda > -\frac{1}{\sqrt{2}}$ or if $\beta < 0$ and $\lambda < -1$.
In both cases, the Mellin transform representation of $\log Z$ involves
\[
    \frac{\Gamma(s)\zeta(s+1)}{\zeta(s)} \sum_{x_1,x_2 \geq 1} (\|x\|_1 + \lambda\sqrt{2} \|x\|_2)^{-s},\qquad \Re(s) > 2.
\]
As usual, the leading term of the expansion of $\log Z$ when $\beta \to 0$ is obtained by computing the residue of this function at $s = 2$. It turns out to be
\[
    \frac{\zeta(3)}{2\zeta(2)} \int_{-\pi/4}^{\pi/4} \frac{d\theta}{(\lambda + \cos (\theta))^2}.
\]
An application of the residue theorem shows that the expected length of the curve is asymptotically equivalent to
\[
    \frac{1}{\beta^3}\frac{\zeta(3)}{\sqrt{2}\zeta(2)} \int_{-\pi/4}^{\pi/4} \frac{d\theta}{(\lambda + \cos (\theta))^3}
\]
and that the coordinates of the ending point have asymptotic expected value
\[
    \frac{1}{\beta^3}\frac{\zeta(3)}{2\zeta(2)} \int_{-\pi/4}^{\pi/4} \frac{\cos(\theta) d\theta}{(\lambda + \cos (\theta))^3}.
\]

As in previous sections, a local limit theorem gives a correspondence between this Gibbs measure and the uniform distribution on a specific set of convex lines, namely the convex polygonal line with endpoint $(n,n)$ and total length belonging to $[L\cdot n, L\cdot n + 1]$ for some $L \in ]\sqrt2, 2[$ which is a function of $\lambda$,
\[
    L(\lambda) = \sqrt2 \dfrac{\int_0^{\frac \pi 4}{\frac{1}{(\lambda+\cos u)^3}}du}{\int_0^{\frac \pi 4}\frac{\cos u}{(\lambda+\cos u)^3}du}.
\]
By computations analogous to section~\ref{sec:limit_shape}, one can show that the uniform distribution on lines with length between $L(\lambda)\cdot n$ and
$L(\lambda) \cdot n+1$ concentrates around the curve described by the parametrization
$$x_\lambda(\phi)=\sqrt2\dfrac{\int_{0}^\phi{\frac{\cos u}{(\lambda+\cos (u-{\frac \pi 4}))^3}}du}{\int_{-\pi/4}^{\pi/4}{\frac{\cos u}{(\lambda+\cos u)^3}du}},\quad y_\lambda(\phi)=\sqrt2\dfrac{\int_{0}^\phi{\frac{\sin u}{(\lambda+\cos (u-{\frac \pi 4}))^3}}du}{\int_{-\pi/4}^{\pi/4}{\frac{\cos u}{(\lambda+\cos u)^3}}du} \quad (0\leq\phi\leq{\frac \pi 2}).$$
The table provided in Figure~\ref{fig:table} summarizes the limit shapes that we obtain for some limit values of $\lambda$. See also Figure~\ref{fig:onion} for a plot showing the interpolation of those limit shapes.

\begin{figure}[h]
\begin{center}
    \renewcommand{\arraystretch}{2}
    \begin{tabular}{|c|cc||ccc|}
        \hline
        $\lambda$ & $-\infty$ & $-1$ & $-\dfrac{1}{\sqrt{2}}$ & $0$ & $+\infty$\\
        \hline
        Limit shape & circle & diagonal & square & parabola & circle\\
        Length $L(\lambda)$ & $\dfrac{\pi}{2}$ & $\sqrt{2}$ & $2$ & $1+\dfrac{\ln(1+\sqrt{2})}{\sqrt{2}}$ & $\dfrac{\pi}{2}$\\
        \hline
    \end{tabular}
\end{center}
\caption[Special values of the new limit shapes]{Critical and special values in the spectrum of limit shapes for the model of convex lattice lines with mixed constraints.}
\label{fig:table}
\end{figure}

\begin{figure}[h]
\begin{center}
    \includegraphics{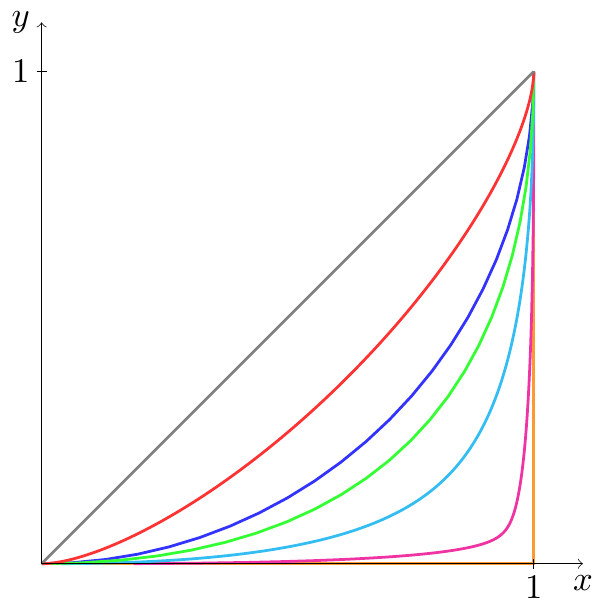}
    \caption[New limit shapes of different Euclidean lengths]{Limit shapes of different Euclidean lengths.
      Successively: \textcolor{black!50}{\mbox{$\sqrt{2}$ (diagonal)}}; \textcolor{red!80}{$1.48$}, \textcolor{blue!80}{$\frac{\pi}{2}$ (circle)}, \textcolor{green}{$1+\frac{\ln(1+\sqrt{2})}{\sqrt{2}}$ (parabola)}, \textcolor{cyan!80}{$1.72$}, \textcolor{magenta!80}{$1.89$} and \textcolor{orange!80}{$2$ (square)}.}
    \label{fig:onion}
\end{center}
\end{figure}

\pagebreak

\bibliography{convex-lines.bib}

\end{document}